\journalname{Archive for Rational Mechanics and Analysis}
\spnewtheorem{algorithm}[theorem]{Algorithm}{\bf}{\it}
\spnewtheorem{cons}[theorem]{Construction}{\it}{\rm}
\spnewtheorem{obs}[theorem]{Observation}{\bf}{\rm}
\newcommand{\compatible}{\ensuremath{\interleave}}
\newcommand{\incompatible}{\ensuremath{\cancel{\interleave}}}
\newcommand{\eps}{\varepsilon}
\newcommand{\R}{\mathbb{R}}
\newcommand{\N}{\mathbb{N}}
\newcommand{\E}{\ensuremath{\mathcal{E}}}
\newcommand{\I}{\ensuremath{\mathcal{I}}}
\renewcommand{\L}{\ensuremath{\mathcal{L}}}
\newcommand{\T}{\ensuremath{\mathcal{T}}}
\newcommand{\CH}{\ensuremath{\mathcal{C}}}
\newcommand{\QH}{\ensuremath{\mathcal{Q}}}
\newcommand{\RH}{\ensuremath{\mathcal{R}}}
\newcommand{\LH}{\ensuremath{\mathcal{L}}}
\newcommand{\F}{\ensuremath{\mathcal{F}}}
\newcommand{\G}{\ensuremath{\mathcal{G}}}
\newcommand{\aff}{\ensuremath{\operatorname{aff}}}
\newcommand{\cof}{\ensuremath{\operatorname{cof}}}
\newcommand{\interior}{\ensuremath{\operatorname{int}}}
\newcommand{\rel}{\ensuremath{\operatorname{rel}}}
\newcommand{\sign}{\ensuremath{\operatorname{sign}}}
\newcommand{\Span}{\ensuremath{\operatorname{span}}}
\newcommand{\Tr}{\ensuremath{\operatorname{Tr}}}
\newcommand{\Sym}[2][]
 {\ensuremath{{\mathcal S}^{#2 \times #2}_{\text{\ensuremath{#1}}} }}
\begin{document}

\title{Non-laminate Microstructures in Monoclinic-I Martensite} 

\author{Isaac Vikram Chenchiah \and Anja Schl\"omerkemper}

\institute{
  Isaac Vikram Chenchiah \at School of Mathematics, University of Bristol, University Walk, Bristol BS8~1TW, UK.
  \email{Isaac.Chenchiah@bristol.ac.uk} 
  \and 
  Anja Schl\"omerkemper \at Chair of Mathematics in the Sciences, Institute for Mathematics, University of W\"urzburg, Emil-Fischer-Str.\ 40, 97074 W\"urzburg, Germany. \email{Anja.Schloemerkemper@mathematik.uni-wuerzburg.de} 
}

%\date{17 June 2012}
\date{}

\maketitle

\abstract{
We study the symmetrised rank-one convex hull of monoclinic-I martensite (a twelve-variant material) in the context of geometrically-linear elasticity. We construct sets of $T_3$s, which are (non-trivial) symmetrised rank-one convex hulls of three-tuples of pairwise incompatible strains. Moreover we construct a five-dimensional continuum of $T_3$s and show that its intersection with the boundary of the symmetrised rank-one convex hull is four-dimensional. We also show that there is another kind of monoclinic-I martensite with qualitatively different semi-convex hulls which, so far as we know, has not been experimentally observed.

Our strategy is to combine understanding of the algebraic structure of symmetrised rank-one convex cones with knowledge of the faceting structure of the convex polytope formed by the strains.
}

%\tableofcontents

%---------------------------
\section{Introduction}

Shape-memory alloys are materials that undergo a diffusionless solid-to-solid phase transformation due to change of temperature. They are capable of large macroscopic deformations and recover their original shape upon heating.  While such materials usually form a cubic lattice (austenite) above a critical temperature they develop microstructures at lower temperatures (martensite). In this article we are interested in the cubic-to-monoclinic-I phase transformation, which occurs, eg., in   NiTi, which is industrially one of the most important shape-memory alloys. In this case, there are twelve transformation strains, see Section~\ref{sec:strains} for details.

Of interest is the set of all strains that can be recovered upon heating. In the variational approach to martensite~\cite{Ball:1987}  this set is modelled by the quasiconvex hull (Definition~\ref{def:semi-convex}) of the transformation strains. Unfortunately the quasiconvex hull of a set is difficult to calculate. 

Bhattacharya and Kohn \cite{Bhattacharya:1997p99} consider various phase transformations (cubic to tetragonal, cubic to trigonal, cubic to orthorhombic and cubic to monoclinic) in the context of geometrically-linear elasticity and observe that except for cubic-to-monoclinic martensite the symmetrised quasiconvex hull coincides with the convex hull. For cubic-to-monoclinic martensite they show that the symmetrised quasiconvex hull is strictly smaller than the convex hull and present an inner bound for it.

We aim to find a better approximation of the symmetrised quasiconvex hull, again in the context of geometrically-linear elasticity. To this end, we are interested in the symmetrised lamination convex hull and the symmetrised rank-one convex hull of the transformation strains; these give inner bounds on the symmetrised quasiconvex hull, see Section~\ref{sec:semi-convexity}.  

Our analysis shows that there are points in the symmetrised rank-one convex hull of mono\-clinic-I martensite which are attained by \emph{non-laminate} microstructures. This suggests (see below) that the symmetrised rank-one convex hull is strictly larger than the lamination convex hull of the twelve transformation strains. Since the symmetrised rank-one convex hull is a subset of the symmetrised quasiconvex hull (Remark~\ref{rem:semi-convex}), the strains attained by these non-laminate microstructures belong to the set of recoverable strains.

Next we give more details on the strategy we use, our results and the organisation of the paper. Finally we will fix some notation.

\paragraph{Strategy.}
Our strategy is to exploit firstly, the algebraic structure of symmetrised rank-one convex cones, secondly, the faceting structure of the convex hull of a finite set, and thirdly, the interaction between the two. 

That rank-one convex cones are varieties has been exploited to develop algorithms to calculate semi-convex hulls~\cite{Kreiner:2003,Kreiner:2004}. The relevance of convex polytopes to semi-convex hulls (compare Lemma~\ref{lem:pairwise-compatible} with Theorem~\ref{thm:edge-compatible}) has been noticed in~\cite{Zhang:2006,Tang:2008}  but has not, to our knowledge, been exploited to determine semi-convex hulls. This paper represents a first attempt in this direction.

The central idea is the following: Given a finite set whose symmetrised rank-one convex hull we wish to compute we proceed as follows. First we compute the symmetrised rank-one convex hulls of all its one-dimensional facets, i.e., edges in the language of convex polytopes; this is trivial, cf.~\eqref{eq:1D} below. We use this, together with knowledge of the structure of symmetrised rank-one convex cones on two-dimensional affine subspaces (Section~\ref{sec:2D-cone}), to determine the symmetrised rank-one convex hulls of all its two-dimensional facets. We then repeat this for higher dimensions.

When the finite set we are interested in is the set of transformation strains of a material capable of a phase transformation from austenite to martensite, the set lies in the five-dimensional affine plane of strains with constant trace and thus the process above terminates when the symmetrised rank-one convex hull of the five-dimensional facet of the set (which is the convex hull of the set) is computed.

In this bootstrapping strategy the steps become progressively more difficult as the dimension increases. Indeed while we can completely implement the two-dimensional step (Section~\ref{sec:2D-cone} and~\cite{Chenchiah-Schloemerkemper-PRSL}) and have partial results for the three-dimensional step~\cite{Chenchiah-Schloemerkemper-JMPS} we have a reason to believe that the steps for dimension four and five are considerably more difficult than that for steps two and three: unlike in lower dimensions, the symmetrised rank-one convex cone in the higher dimensions is an algebraic surface of a polynomial which is necessarily irreduccible~\cite{Chenchiah-Schloemerkemper-PRSL}.

\paragraph{Results.}
We have three main results:

First we show that there are two kinds of monoclinic-I martensites which differ qualitatively with regard to the polytope-structure of their convex hulls. It follows that their semi-convex hulls are qualitatively different as well. Curiously all known monoclinic-I martensites belong to one of these kinds, which we name monoclinic-Ia (the other being monoclinic-Ib).

The question as to whether $T_3$s (which are non-trivial symmetrised rank-one convex hulls of 3-tuples of pairwise incompatible strains, which are attained by non-laminate microstructures, see Section~\ref{sec:T3}) can be formed from the twelve variants of Monoclinic-II martensite is raised in~\cite[p863]{Bhattacharya:1994p843}. There it is shown that this is possible when a certain lattice parameter is sufficiently small. Here we prove a stronger result: In Section~\ref{sec:T3} we present a simple test for $T_3$s (Lemma~\ref{lem:T3}) that shows that in fact $T_3$s can form for all (non-zero) values of the lattice parameter, and that the same is true for Monoclinic-I martensite as well. This is our second main result.

Our third result is a consequence of this: We show that for Monoclinic-Ia martensite, the symmetrised rank-one convex hull of the twelve transformation strains contains a five-dimensional continuum of points which are attained by non-laminate microstructures. Moreover the intersection of this continuum with the boundary of the convex hull is four-dimensional. This suggests that the symmetrised rank-one convex hull of the transformation strains is strictly larger than the symmetrised lamination convex hull. This would then imply that the symmetrised quasiconvex envelope of the energy density of this material is different from the symmetrised lamination convex envelope.  It is well known that lamination convex envelopes can differ from rank-one convex envelopes, cf.\ \cite[Sect.\ 4]{Dacorogna:2007}, and (in dimensions larger than two) that rank-one convex envelopes can differ from quasiconvex envelopes~\cite{Sverak:1992}. However, in the context of materials science, all quasiconvex and rank-one convex envelopes that have been evaluated so far have in fact  coincided with lamination convex envelopes; Monoclinic-Ia martensite is the first \emph{material} for which we now have a strong indication that they differ.

\paragraph{Organisation of the paper.}
In Section~\ref{sec:sroc} we refresh the reader's memory of some basic facts and results about strain compatibility, convex sets (in particular, convex polytopes) and semi-convex functions and sets.

In Section~\ref{sec:2D-cone} we study the structure of symmetrised rank-one convex cones on two-dimensional affine subspaces of $\Sym[c]{3}$ (Lemma~\ref{lem:2D-compatible_directions}), see below for notation. The results presented here enable the computation of the symmetrised rank-one convex hull of any finite set in two-dimensional affine subspaces of $\Sym[c]{3}$ and the characterisation of those compact sets in these spaces that possess non-trivial symmetrised rank-one convex hulls; we present some results in Section~\ref{sec:T3} but postpone a more extensive discussion to~\cite{Chenchiah-Schloemerkemper-PRSL}. We extend these results to higher dimensions in~\cite{Chenchiah-Schloemerkemper-JMPS} but Lemma~\ref{lem:T3-3D} and Section~\ref{sec:nonlaminates} provide a glimpse of the utility of the results in Section~\ref{sec:T3} even in higher dimensions.

We then turn from abstract results to the specific class of materials of interest to us, monoclinic-I martensite. After some preliminary observations in Section~\ref{sec:strains} on the compatibility and symmetry relations between the twelve transformation strains of materials in this class we determine, in Section~\ref{sec:polytope}, the facets of the convex hull of the twelve transformation strains of monoclinic-I martensite (Observations~\ref{obs:e<d}, \ref{obs:e>d} and \ref{obs:e=d}). This leads to the discovery that there are in fact two kinds of monoclinic-I martensitic materials.

With this foundation behind us, in Section~\ref{sec:nonlaminates} we investigate the (theoretical) possibility of non-laminate zero-energy microstructures occurring in these materials. We construct an open set in the symmetrised rank-one convex hull of the transformation strains for which $T_3$-microstructures are optimal (Construction~\ref{cons:5D-set-T3s}). We then deduce that in monoclinic-Ia martensite this set intersects the boundary of the convex hull, and thus the boundary of the symmetrised rank-one convex hull.

(In Sections~\ref{sec:polytope} and~\ref{sec:nonlaminates} we use Mathematica to simplify computations but these computations are non-numerical. The Mathematica code that we have used together with explanatory notes can be found in the electronic supplementary material accompanying this article~\cite{Chenchiah-Schloemerkemper1ESM}.)

We conclude with Section~\ref{sec:conclusions} with some questions raised by the preceding two sections. One of these is whether monoclinic-Ib martensite might have a larger set of recoverable strains (i.e., quasiconvex hull) than monoclinic-Ia martensite (modulo appropriate normalisation of the lattice parameters). This naturally also leads to the question as to whether a material that lies at the boundary of monoclinic-Ia and monoclinc-Ib martensite might demonstrate the best behaviour of all.

\paragraph{Notation.}
In the geometrically linear theory of elasticity, the strains (pointwise) belong to the space of real symmetric  $3\times 3$ matrices denoted by $\Sym{3}$. We phrase some of our results for real symmetric $d\times d$ matrices and then use the symbol $\Sym{d}$. The space of real symmetric $d \times d$ matrices whose trace is an arbitrary (but fixed) constant $c$ is denoted by $\Sym[c]{d}$.

We introduce an inner product $\langle \cdot, \cdot \rangle$ on $\Sym{d}$ by $\Sym{d} \ni A,B \mapsto \langle A, B \rangle = \Tr(AB)$; the norm induced by this inner product is $\| \cdot \|$. 

For $e_1,e_2 \in \Sym{d}$ we set
\begin{align*}
(e_1,e_2)
  &:= \{ \lambda e_1 + (1-\lambda) e_2\ |\ \lambda \in (0,1) \}, \\
[e_1,e_2]
  &:= \{ \lambda e_1 + (1-\lambda) e_2\ |\ \lambda \in [0,1] \}.
\end{align*}
By a \emph{direction} in $\Sym{d}$ we mean a one-dimensional affine subspace of $\Sym{d}$. We denote the affine span of $S \subset \Sym{d}$ by $\aff \Span (S)$,
the relative boundary of $S \subset \Sym{d}$ by $\rel \partial S$ and its relative interior by $\rel \interior {S}$.

%-----------------------------
\section{Preliminaries}
\label{sec:sroc}

\subsection{Strain compatibility}
\label{sec:compatibility}

\begin{definition}[Strain compatibility] \label{def:compatibility}
Let $e_1, e_2 \in \Sym{3}$ and $S_1, S_2 \subset \Sym{3}$.
\begin{enumerate}
\item $e_1$ is compatible with $e_2$ (or $e_1$ and $e_2$ are compatible), $e_1 \compatible e_2$, if there exist $a,b \in \R^3$ such that $e_1 - e_2 = \frac12 (a \otimes b + b \otimes a)$, where $\otimes$ denotes the dyadic product. 
\item $e_1$ is compatible with 0 (or compatible for short) if $e_1 \compatible 0$.
\item $e_1$ is incompatible with $e_2$ (or $e_1$ and $e_2$ are incompatible), $e_1 \incompatible e_2$, if $e_1$ and $e_2$ are not compatible.
\item $e_1$ is incompatible with 0 (or incompatible for short) if $e_1 \incompatible 0$.
\item $S_1$ is totally compatible with $S_2$ (or $S_1$ and $S_2$ are totally compatible), $S_1 \compatible S_2$,  if for all $e_1 \in S_1$ and for all $e_2 \in S_2$, $e_1 \compatible e_2$. \label{it:local1}  
\item $S_1$ is compatible if $S_1 \compatible S_1$, i.e., if for all $e_1,e_2 \in S_1$, $e_1 \compatible e_2$.  \label{it:local9}
\item $S_1$ is totally incompatible with $S_2$ (or $S_1$ and $S_2$ are totally incompatible),
  $S_1 \incompatible S_2$, if for all $e_1 \in S_1$ and for
  all $e_2 \in S_2$, $e_1 \incompatible e_2$. \label{it:local2} 
\end{enumerate}
\end{definition}

We observe that $[e_1,e_2]$ is compatible if and only if $e_1 \compatible e_2$. An alternative term for compatibility is ``symmetrised rank-one connectedness''.

In the figures compatible lines are represented by solid lines and incompatible lines by dashed lines, cf.\ eg.\ Figure~\ref{fig:T3}. The following lemma follows immediately from~\cite[Lemma 4.1]{Kohn:1991p193}. 

\begin{lemma}[Compatibility in ${\Sym[c]{3}}$]
\label{lem:SymTr3-compatibility}
Let $e_1, e_2 \in \Sym[c]{3}$. Then
$e_1 \compatible e_2$ iff $\det(e_1 - e_2) = 0$.
\end{lemma}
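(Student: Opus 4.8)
The plan is to prove the two implications separately by reducing everything to the spectral structure of the symmetric matrix $M := e_1 - e_2$; this is in the spirit of \cite[Lemma~4.1]{Kohn:1991p193}, and I will make explicit where the constant-trace hypothesis enters.

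First I would treat the ``only if'' direction. If $e_1 \compatible e_2$ then $M = \frac12(a \otimes b + b \otimes a)$ for some $a, b \in \R^3$, and every vector orthogonal to both $a$ and $b$ lies in $\ker M$. Since $\Span\{a,b\}$ has dimension at most two in $\R^3$, its orthogonal complement is non-trivial, so $M$ is singular and $\det(e_1 - e_2) = 0$.

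Next I would treat the ``if'' direction, which is where the trace constraint is used. Assume $\det(e_1 - e_2) = 0$. Because $e_1, e_2 \in \Sym[c]{3}$ we have $\Tr M = \Tr e_1 - \Tr e_2 = 0$, while $\det M = 0$ makes $0$ an eigenvalue of $M$; hence the eigenvalues of $M$ are $\mu$, $-\mu$, $0$ for some $\mu \ge 0$. Writing $M = Q \operatorname{diag}(\mu,-\mu,0) Q^T$ with $Q$ orthogonal, I would exhibit explicit vectors---e.g. $a = Q(\sqrt\mu, \sqrt\mu, 0)^T$ and $b = Q(\sqrt\mu, -\sqrt\mu, 0)^T$---and check by a short direct computation that $\frac12(a \otimes b + b \otimes a) = M$, so that $e_1 \compatible e_2$.

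I do not expect a genuine obstacle; the one point that needs care is the role of the hypothesis $e_1, e_2 \in \Sym[c]{3}$. For a general $M \in \Sym{3}$ the identity $M = \frac12(a \otimes b + b \otimes a)$ is equivalent to $\det M = 0$ \emph{together with} the two remaining eigenvalues having opposite signs (this sign condition fails for $M = \operatorname{diag}(1,1,0)$, for instance). The constant-trace hypothesis forces $\Tr M = 0$, which absorbs the sign condition into $\det M = 0$ and yields the clean characterisation stated in the lemma.
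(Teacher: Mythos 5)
Your proof is correct and is essentially the argument behind the result the paper simply cites (\cite[Lemma~4.1]{Kohn:1991p193}): the paper offers no proof of its own, and the spectral reduction to eigenvalues $\mu,-\mu,0$ with the explicit choice $a=Q(\sqrt\mu,\sqrt\mu,0)^T$, $b=Q(\sqrt\mu,-\sqrt\mu,0)^T$ is the standard way to prove it. The only (inessential) imprecision is in your closing side remark: for general $M\in\Sym{3}$ the correct condition is that the two remaining eigenvalues are not both strictly positive nor both strictly negative (e.g.\ $\operatorname{diag}(1,0,0)=e_1\otimes e_1$ is symmetrised rank-one even though its remaining eigenvalues are $1$ and $0$), but this does not affect the proof of the lemma itself.
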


\begin{definition}[Compatible cone]
Let $S \subset \Sym[c]{3}$ and $x \in S$. The compatible cone in $S$ at $x$ is the set
\begin{equation*}
\Lambda_{S,x}
 := \{ y \in S\ |\ y \compatible x \}.
\end{equation*}
When $0 \in S$ we set $\Lambda_S := \Lambda_{S,0}$.
\end{definition}

Note that the compatible cone $\Lambda_{S,x}$ is an affine cone with vertex $x$ in the linear algebraic sense, i.e., it is closed under multiplication by positive reals. However, the compatible cone is not geometrically a cone.

\subsection{Convex sets}
\label{sec:convexity}

We recall some elementary definitions and results from convex analysis. For more details see, eg.\ \cite{Rockafellar:1996}.

%\begin{notation}
Let $E$ be a subset of a vector space. We denote the convex hull of $E$ by $\CH(E)$.
%\end{notation}

\begin{definition}[Extreme subsets of convex sets]
Let $S\subset \R^d$ be convex. Then $S' \subseteq S$ is an {\em extreme subset} of $S$ if $S'$ is convex and satisfies: If $x,y\in S$ and $\exists \lambda \in (0,1)$ such that $\lambda x + (1-\lambda)y\in S'$, then $x,y\in S'$. 
\end{definition}

Of special interest to us are \emph{convex polytopes} which are convex hulls of finite sets. Definition~\ref{def:polytope} and Remark~\ref{rem:polytope} suffice for us. For an introduction to convex polytopes we refer the reader to, eg.\ \cite{Barvinok:2002,Brondsted:1982,Ewald:1996,Grunbaum:2003,Gruber:2007,Ziegler:1994}.

\begin{definition}[Vertices, edges and facets of convex polytopes]
\label{def:polytope}
The vertices of a convex polytope are its extreme points, its edges are its one-dimensional extreme subsets and its facets are its extreme subsets with co-dimension one.
\end{definition}

Let $E$ be a finite set. We denote the set of $n$-dimensional extreme subsets of $\CH(E)$ by $\F_n(E)$. Thus the set of vertices of $\CH(E)$ is $\F_0(E)$, the set of its edges is $\F_1(E)$, and the set of facets is $\F_{\dim(\CH(E))-1}(E)$ and $\F_{\dim{\CH(E)}}(E) = \{ \CH(E) \}$.

\begin{remark}
\label{rem:polytope}
 For $S \in \F_{n}(E)$, $n=1,\dots, \dim(\CH(E))$, the relative boundary $\rel \partial S$ is a union of elements of $\F_{n-1}(E)$. 
\end{remark}

\subsection{Semi-convex functions and sets}
\label{sec:semi-convexity}

Next we recall some elementary definitions and results about semi-convex functions and sets. For more details see, eg.\ \cite{Muller:1999,Dacorogna:2007}. Since we work in the context of $S^{d \times d}$ (as opposed to $R^{d \times d}$) we have appended the qualifier ``symmetrised'' to the various notions of semi-convexity. The qualifier ``symmetric'' is also used in the literature.

The term ``symmetrised rank-one convex'' (Defintion~\ref{def:semi-convex} below) might be misleading in that the matrices involved need not be of rank one but rather are the symmetric parts of rank-one matrices. An alternative name could be ``wave-cone convex'', because the symmetric tensor products form the wave cone of a second-order linear differential operator whose kernel are the symmetrised gradients~\cite{Tartar:1979,Murat:1981}.

\begin{definition}[Semi-convex functions]
\par \indent
\begin{enumerate}
\item $f \colon S^{d \times d} \to \R$ is symmetrised rank-one convex if for all $\lambda \in [0,1]$
\begin{equation*}
f(\lambda e_1 + (1-\lambda) e_2)
 \leqslant \lambda f(e_1) + (1-\lambda) f(e_2) \quad \forall e_1, e_2 \in S^{d \times d} \text{ with } e_1 \compatible e_2.
\end{equation*}
\item A locally-bounded Borel function $f \colon S^{d \times d} \to \R$ is symmetrised quasiconvex if for an open and bounded set $U\subset \R^d$ with $|\partial U|=0$ one has
\begin{equation*}
f(e)
 \leqslant \frac{1}{|U|} \int_U f(e + D_s\varphi) \,dx
 \quad \forall \varphi \in W^{1,\infty}_0(U,\R^d),
\end{equation*}
whenever the integral on the right hand side exists, where $D_s \varphi$ is the symmetrised gradient of $\varphi$.
\end{enumerate}
\end{definition}

\begin{definition}[Semi-convex sets]
\label{def:semi-convex}
Let $E$ be a compact set in $S^{d \times d}$.
\begin{enumerate}
\item The symmetrised quasiconvex hull of $E$ is defined as
\begin{equation*}
\QH(E)
 := \{ e \in S^{d \times d} \, | \, f(e) \leqslant \sup_{e' \in E}
      f(e') \quad \forall f \colon S^{d \times d} \to \R \text{ quasiconvex} \}.
\end{equation*}
\item The symmetrised rank-one convex hull of $E$ is defined as
\begin{multline*}
\RH(E)
 := \{ e\in S^{d \times d} \, | \, f(e) \leqslant \sup_{e' \in E} f(e') \\ %\quad
      \forall f \colon S^{d \times d} \to \R
       \text{ symmetrised rank-one convex} \}.
\end{multline*}
\item The symmetrised lamination convex hull of $E$, $\LH(E)$, is the
smallest set $\widetilde{E} \supseteq E$ such that $e_1, e_2 \in \widetilde{E}$
and $e_1 \compatible e_2$ implies that $[e_1, e_2] \subset \widetilde{E}$.
\end{enumerate}
\end{definition}

We note that symmetrised lamination convex hulls can be constructed as follows: For $n \in \N_0$, let
\begin{align*}
\LH_0(E) &:= E, \\
\LH_{n+1}(E) &:= \left\{ e\ |\ \exists e_1, e_2 \in \LH_n(E),\ e_1 \compatible e_2,\ e \in [e_1,e_2] \right\}.
\end{align*}
Then,
\begin{equation*}
\LH(E) = \bigcup_{n \in \N_0} \LH_n(E).
\end{equation*}

Symmetrised rank-one convex hulls also have an alternate characterisation (cf.\ eg.\ \cite[Lemma 2.5]{Kreiner:2003}):
\begin{multline} \label{eq:sroch}
\RH(E)
 := \{ e\in S^{d \times d} \, | \, f(e) = 0 % \quad
      \forall f \colon S^{d \times d} \to [0,\infty) \\
       \text{ symmetrised rank-one convex with } f(E)=\{0\} \}.
\end{multline}

We shall repeatedly use Remark~\ref{rem:semi-convex} and Lemma~\ref{lem:pairwise-compatible} without explicitly citing them:

\begin{remark}
\label{rem:semi-convex}
$\LH(E) \subseteq \RH(E) \subseteq \QH(E) \subseteq \CH(E)$. 
\end{remark}

\begin{lemma}[{\cite[Section~3.4.1, p.~231]{Bhattacharya:1993p205}}]
\label{lem:pairwise-compatible}
Let $E \subset \Sym{d}$ be finite and compatible.   
Then $\LH(E) = \CH(E)$.
\end{lemma}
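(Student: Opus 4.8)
The plan is to show both inclusions. Since $\RH(E) \subseteq \CH(E)$ always holds (Remark~\ref{rem:semi-convex}), and since $\LH(E) \subseteq \RH(E) \subseteq \QH(E) \subseteq \CH(E)$, it suffices to establish that $\CH(E) \subseteq \LH(E)$; the reverse inclusion $\LH(E) \subseteq \CH(E)$ is immediate because $\CH(E)$ is convex and hence closed under taking segments between any two of its points, compatible or not, so in particular closed under the operation defining $\LH$. Thus the whole content is: every convex combination of points of a finite \emph{compatible} set $E$ can be reached by successive lamination along compatible segments.

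I would prove $\CH(E) \subseteq \LH(E)$ by induction on $\#E$. If $\#E = 1$ this is trivial, and if $\#E = 2$, say $E = \{e_1, e_2\}$, then $e_1 \compatible e_2$ by hypothesis, so $[e_1,e_2] \subseteq \LH(E)$ by definition, and $[e_1,e_2] = \CH(E)$. For the inductive step, let $E = \{e_1, \dots, e_n\}$ with $n \geqslant 3$, and let $x \in \CH(E)$, so $x = \sum_{i=1}^n \lambda_i e_i$ with $\lambda_i \geqslant 0$, $\sum_i \lambda_i = 1$. If some $\lambda_i = 0$ we are done by the induction hypothesis applied to $E \setminus \{e_i\}$ (which is still finite and compatible). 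Otherwise all $\lambda_i > 0$; write
\begin{equation*}
x = \lambda_n e_n + (1 - \lambda_n) y, \qquad y := \frac{1}{1-\lambda_n}\sum_{i=1}^{n-1} \lambda_i e_i \in \CH(\{e_1,\dots,e_{n-1}\}).
\end{equation*}
By the induction hypothesis $y \in \LH(\{e_1,\dots,e_{n-1}\}) \subseteq \LH(E)$, and $e_n \in \LH(E)$. It then remains to check that $e_n \compatible y$, for then $[e_n, y] \subseteq \LH(E)$ (since $\LH(E)$ is closed under lamination along compatible segments) and $x \in [e_n, y]$, completing the step.

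The key point — and the one place the hypothesis that $E$ is \emph{compatible} is really used — is the claim that $e_n \compatible y$ whenever $y$ lies in the convex hull of points each individually compatible with $e_n$. For $d = 3$ and $E \subset \Sym[c]{3}$, this follows from Lemma~\ref{lem:SymTr3-compatibility}: compatibility of $e_n$ with a point $z$ is the vanishing of $\det(e_n - z)$, which on the line segment $[e_1, \dots]$ translates to a polynomial condition, but more robustly one argues directly that the set $\{ z \mid e_n \compatible z \}$ is, in the relevant subspace, closed under the convex combinations that arise here — essentially because $e_1 - e_n, \dots, e_{n-1} - e_n$ all being symmetrised rank-one implies (by compatibility of $E$, i.e.\ also $e_i \compatible e_j$ for $i,j < n$) that the differences lie in a common structure; this is exactly the substance of the cited reference~\cite[Section~3.4.1, p.~231]{Bhattacharya:1993p205}, where the relevant linear-algebraic fact (a version of the statement that a set of pairwise symmetrised-rank-one-connected strains spans an affine subspace on which \emph{every} pair is compatible) is established. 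I expect this compatibility-propagation step to be the main obstacle, and I would either invoke the cited result directly or, in the $\Sym[c]{3}$ setting, reduce it to checking that $\det$ vanishes on $\aff\Span(E)$ once it vanishes on all vertex-difference directions — a short computation with the structure of rank-one-connected symmetric matrices. The rest of the argument is the routine induction bookkeeping sketched above.
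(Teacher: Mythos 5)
The reduction to $\CH(E) \subseteq \LH(E)$ and the induction bookkeeping are fine, but the step you yourself flag as the main obstacle --- that $e_n \compatible y$ whenever $y \in \CH(\{e_1,\dots,e_{n-1}\})$ and each $e_i \compatible e_n$ --- is not merely hard: it is false, and no appeal to the cited reference can rescue the decomposition $x = \lambda_n e_n + (1-\lambda_n)y$ as written. Compatibility with a fixed strain is a determinantal (cubic) condition, not a convex one; the compatible cone $\Lambda_{S,x}$ is closed under positive scaling from its vertex but is not a convex set. Concretely, in $\Sym[0]{3}$ take $e_3 = 0$, $e_1 = \mathrm{diag}(1,0,-1)$ and $e_2 = \mathrm{diag}(0,1,-1)$. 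Then $\det e_1 = \det e_2 = \det(e_1-e_2) = 0$, so $E = \{e_1,e_2,e_3\}$ is compatible, yet
\begin{equation*}
\det\Bigl(\tfrac12(e_1+e_2) - e_3\Bigr) = \det \,\mathrm{diag}\bigl(\tfrac12,\tfrac12,-1\bigr) = -\tfrac14 \neq 0,
\end{equation*}
so the midpoint of $[e_1,e_2]$ is \emph{incompatible} with $e_3$. Your induction therefore already stalls at $n=3$. The parenthetical ``linear-algebraic fact'' you attribute to the reference --- that pairwise compatible strains span an affine subspace on which every pair is compatible --- fails on the same example, so it cannot be the substance of the cited proof.

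The lemma is nevertheless true, and the fix is to convexify along a fixed compatible \emph{direction} rather than towards a fixed vertex. This is exactly what the paper does in proving the stronger Theorem~\ref{thm:edge-compatible}, of which the lemma is the special case in which every edge of $\CH(E)$ is automatically compatible (its endpoints are vertices of $\CH(E)$, hence elements of $E$). One inducts on the dimension $n$ of the faces of $\CH(E)$: for $e$ in the relative interior of an $(n+1)$-face $S$, take the line through $e$ parallel to an edge $v$ of $S$; it meets $\rel\partial S$ in two points lying on $n$-faces, hence in $\LH_n(E)$ by the inductive hypothesis, and their difference is a real multiple of $v$, hence compatible --- the set of symmetrised rank-one matrices is closed under real scaling even though it is not convex. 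In the triangle example above, the chord through an interior point parallel to $e_1 - e_2 = \mathrm{diag}(1,-1,0)$ exits through two compatible edges, and that chord is itself compatible; this is how the interior is reached. If you wish to keep an induction on $\#E$, it must be reorganised around this idea; the decomposition you chose cannot be repaired.
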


In particular, 
\begin{alignat}{2}
&\forall x,y \in \Sym[c]{3}, \quad
 &\RH(\{x,y\}) 
  &= \begin{cases}
      [x,y] & \text{if } x  \compatible y, \\
      \{x,y\} & \text{else}.
      \end{cases} \label{eq:1D}
\end{alignat}

Lemma~\ref{lem:pairwise-compatible} however is too weak for our purposes; we present a sharp version~\cite[p.38]{Zhang:2006}\cite[p.1266]{Tang:2008}:

\begin{theorem}
\label{thm:edge-compatible}
Let $E \subset \Sym{d}$ be finite. Then $\LH(E) = \CH(E)$ if and only if every edge of $\CH(E)$ is compatible, i.e., if and only if
\begin{equation*}
\forall e_1, e_2 \in E, \qquad \ e_1 \incompatible e_2 \implies [e_1,e_2] \notin \F_1(E).
\end{equation*}
\end{theorem}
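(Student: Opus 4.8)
The plan is to prove both implications of the biconditional, the forward direction being essentially trivial and the reverse direction being the substantive part. For the forward implication, suppose $\LH(E) = \CH(E)$ and let $[e_1,e_2]$ be an edge of $\CH(E)$ with $e_1, e_2 \in E$. Since $[e_1,e_2] \subset \CH(E) = \LH(E)$, every point of the segment is reached by finitely many lamination steps starting from $E$. I would argue that because an edge is a one-dimensional extreme subset, any lamination step that produces a point in the relative interior of $[e_1,e_2]$ must already have both its endpoints inside $[e_1,e_2]$ (extremality forces $e, e' \in [e_1,e_2]$ whenever $\lambda e + (1-\lambda) e' \in [e_1,e_2]$). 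A straightforward induction on the lamination level $n$ then shows that the only points of $E$ that can ever contribute to points of $(e_1,e_2)$ are $e_1$ and $e_2$ themselves, and at the first step where a genuinely interior point appears we need $e_1 \compatible e_2$. Hence every edge is compatible.

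For the reverse implication — assume every edge of $\CH(E)$ is compatible, and show $\LH(E) = \CH(E)$ — I would proceed by induction on the dimension of the faces, exactly mirroring the bootstrapping philosophy described in the introduction. The base case is $\F_0(E)$ (vertices), trivial, and edges, which are compatible segments, hence equal to their own lamination (and rank-one, hence lamination) convex hulls by the observation that $[e_1,e_2]$ is compatible iff $e_1 \compatible e_2$. For the inductive step, let $S \in \F_n(E)$ with $n \geqslant 2$ and assume $\LH(S' \cap E) = S'$ for every proper face $S'$ of $S$; in particular, by Remark~\ref{rem:polytope}, the whole relative boundary $\rel \partial S$ is covered by lamination hulls of subsets of $E$ and therefore $\rel \partial S \subseteq \LH(E)$. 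It then suffices to show that every point $x$ in the relative interior of $S$ lies in $\LH(E)$: pick any direction in $\aff\Span(S)$ through $x$; it meets $\rel \partial S$ in two points $p, q \in \LH(E)$ with $x \in (p,q)$. If $p \compatible q$ we are done immediately. If not, I would slightly perturb the direction: the key point is that $p$ and $q$ each lie on some facet of $S$, and using the compatibility structure within those lower-dimensional faces (where the induction hypothesis already gives us the full convex hull) we can choose a replacement pair $p', q' \in \LH(E)$, still bracketing $x$, with $p' \compatible q'$.

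The main obstacle is precisely this last perturbation argument: ensuring that one can always find, through an arbitrary interior point $x$ of an $n$-dimensional face, a \emph{compatible} chord with both endpoints already in $\LH(E)$. Compatibility in $\Sym[c]{3}$ is the vanishing of a determinant (Lemma~\ref{lem:SymTr3-compatibility}), so the set of points compatible with a fixed point is a quadric cone, not a subspace; one must exploit that the boundary faces are themselves fully laminated to have enough freedom to slide the endpoints onto this quadric while keeping $x$ on the chord. I would handle this by a dimension count — the compatible cone $\Lambda$ through a generic boundary point has codimension one in $\aff\Span(S)$, and intersecting two such cones (one anchored near $p$, one near $q$) against the pencil of lines through $x$ generically yields a solution — together with a limiting argument to cover the non-generic configurations (e.g. when $x$ is itself compatible with a whole face). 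Since the cited sources (\cite[p.38]{Zhang:2006}, \cite[p.1266]{Tang:2008}) state this as known, I would follow their treatment for the delicate cases rather than reconstruct every detail here, but the skeleton above — forward direction by extremality, reverse direction by induction on face dimension with a compatible-chord construction — is the route I would take.
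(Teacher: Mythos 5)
Your necessity argument is the same extremality argument as the paper's and is fine. The gap is in the sufficiency direction, at exactly the point you flag as ``the main obstacle'': you take an \emph{arbitrary} chord through an interior point $x$ of a face $S$ and then try to repair incompatibility of its endpoints by perturbation and a dimension count on compatible cones. This does not work as stated. First, the count you invoke (``the compatible cone has codimension one'') is a feature of $\Sym[c]{3}$, whereas the theorem is stated for general $\Sym{d}$; for $d\geqslant 4$ the symmetrised rank-one matrices form a set of codimension $d(d+1)/2-(2d-1)\geqslant 3$, so a generic pencil of chords through $x$ contains \emph{no} compatible one and no amount of perturbing will produce the pair $p'\compatible q'$ you need. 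Second, even in $\Sym[c]{3}$ your sketch intersects cones ``anchored near $p$ and $q$'' against the pencil of lines through $x$ without ever using the hypothesis that the \emph{edges} are compatible, so it could not distinguish the true statement from a false one; the appeal to the cited sources is doing all the work.

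The missing idea is that compatibility depends only on the difference $e_1-e_2$, hence is translation-invariant: if $v$ is the direction of a compatible edge, then \emph{any} two points differing by a multiple of $v$ are compatible. So instead of an arbitrary chord, pick $v$ to be the direction of some edge of $\CH(E)$ lying in $\rel\partial S$ (such an edge exists since $\dim S\geqslant 2$), and take the chord through $x$ parallel to $v$. Its two endpoints $x+\lambda_\pm v$ lie on $\rel\partial S$, hence in $\LH_n(E)$ by the inductive hypothesis, and they are automatically compatible because their difference is parallel to $v$. This removes the need for any perturbation, genericity, or limiting argument, and it is the only step at which the hypothesis ``every edge is compatible'' actually enters the sufficiency proof. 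With this substitution your induction on face dimension becomes exactly the paper's proof (and additionally yields the quantitative statement $S\in\F_n(E)\implies S\subseteq\LH_n(E)$, i.e.\ lamination depth at most $\dim\CH(E)$).
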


\begin{proof}
\emph{Necessity}.
Let $[e_1,e_2] \in \F_1(E)$ with $e_1 \incompatible e_2$. We show that $(e_1,e_2) \not\subset \LH(E)$:

Let $e \in [e_1,e_2]$ and $e \in \LH(E)$. Then $\exists n \in \N$ finite,  $f_1, f_2 \in \LH_n(E)$ such that $e \in [f_1,f_2]$. However by extremality of $[e_1,e_2]$, $f_1,f_2 \in [e_1,e_2]$. Applying the same argument to $f_1$ and $f_2$ we conclude that in fact $e \in \LH(\{e_1,e_2\})$ but from~\eqref{eq:1D}, $\LH(\{e_1,e_2\}) = \{e_1,e_2\}$. Thus $e \in [e_1,e_2]$ and $e \in \LH(E) \implies e \in \{e_1,e_2\}$.

\emph{Sufficiency}.
Let every edge of $\CH(E)$ be compatible. We show by induction that
\begin{equation}
S \in \F_n(E) \implies S \subseteq \LH_n(E). \tag{$P_n$}
\end{equation}
for $n=0,1,\dots, \dim(\CH(E))$. 
Since ($P_{\dim(\CH(E))}$) is the statement that $\CH(E) \subseteq \LH(E)$, the result follows from Remark~\ref{rem:semi-convex}.

($P_0$) is trivially true since $e \in \F_0$ implies $e \in E$ and thus $e \in \LH_0(E)$; ($P_1$) is true since every edge of $\CH(E)$ is compatible by assumption. Now let ($P_n$) be true for some fixed $n=1,\dots, \dim(\CH(E))-1$. We show that ($P_{n+1}$) is true:

Let $S \in \F_{n+1}(E)$ and $e \in S$. If $e \in \rel \partial S$, then $e$ is contained in an element of $\F_{n}(E)$ (Remark~\ref{rem:polytope}) and thus, by the inductive hypothesis, $e \in \LH_n(E) \subset \LH_{n+1}(E)$. We consider the case $e \in \rel \interior S$:

Pick $v \in \F_1(E)\cap \rel\partial S$. We view $v$ as a vector in $\Sym{d}$. Since $S$ is bounded, the inclusion 
\begin{equation*}
 e + \lambda v \in \rel \partial S, \quad \lambda \in \R,
\end{equation*}
has two solutions $\lambda_- < 0$ and $\lambda_+ > 0$. Note that $(e + \lambda_+ v)-(e + \lambda_- v)$ is parallel to $v$, which is compatible (by assumption). Thus $e + \lambda_\pm v \in \LH_n(E)$ (by ($P_n$)), $e + \lambda_- v \compatible e + \lambda_+ v$ and
\begin{equation*}
e
 = \frac{\lambda_+}{\lambda_+ - \lambda_-} (e + \lambda_- v) + \frac{-\lambda_-}{\lambda_+ - \lambda_-} (e + \lambda_+ v).
\end{equation*}
It follows that $e \in \LH_{n+1}(E)$ and thus $S \subseteq \LH_{n+1}(E)$.

\end{proof}

%----------------------------------------------------------------------------------------------------------------------
\section{The compatible cone in two-dimensional affine subspaces of $\Sym[c]{3}$}
\label{sec:2D-cone}

Our first task is to characterise, both geometrically (Lemma~\ref{lem:2D-compatible_directions}) and algebraically (Remark~\ref{rem:2D-compatible_directions}), the symmetrised rank-one convex cone in two-dimensional affine subspaces of $\Sym[c]{3}$.

\begin{lemma}
\label{lem:2D-compatible_directions}
Let $S$ be a two-dimensional subspace of $\Sym[c]{3}$. Then either 
\begin{enumerate}
 \item $S$ contains precisely one, two or three compatible directions, or
 \item $S$ is compatible.
\end{enumerate}
\end{lemma}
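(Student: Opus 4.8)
The plan is to parametrise the two-dimensional affine subspace $S$ and translate the compatibility condition into a statement about a quadratic form, using Lemma~\ref{lem:SymTr3-compatibility}: for $e_1, e_2 \in \Sym[c]{3}$ we have $e_1 \compatible e_2$ iff $\det(e_1 - e_2) = 0$. Fix a point $x_0 \in S$ and two linearly independent directions $v_1, v_2 \in \Sym{3}$ (necessarily trace-free, since $S \subset \Sym[c]{3}$) so that $S = \{ x_0 + s v_1 + t v_2 \mid (s,t) \in \R^2\}$. A direction in $S$ is spanned (as a difference of two points of $S$) by a nonzero vector $w = \alpha v_1 + \beta v_2$; and $[x_0 + s_1 v_1 + t_1 v_2, x_0 + s_2 v_2 + t_2 v_2]$ is a compatible segment iff $\det\bigl((s_1 - s_2) v_1 + (t_1 - t_2) v_2\bigr) = 0$. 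Thus the set of compatible \emph{directions} in $S$ is governed entirely by the single cubic form
\begin{equation*}
p(\alpha, \beta) := \det(\alpha v_1 + \beta v_2), \qquad (\alpha,\beta) \in \R^2,
\end{equation*}
a homogeneous polynomial of degree $3$ in two real variables: a direction (i.e.\ the line $\R\langle \alpha v_1 + \beta v_2\rangle$ through a given point) is compatible precisely when $p(\alpha,\beta) = 0$, and $S$ is a compatible set precisely when $p \equiv 0$.

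The key structural fact is that $p$ is a \emph{homogeneous} cubic in two variables, so it only depends on the ratio $[\alpha : \beta] \in \mathbb{RP}^1$. Either $p \equiv 0$, in which case every direction in $S$ is compatible and, since $[e_1,e_2]$ compatible $\iff e_1 \compatible e_2$, the set $S$ itself is compatible --- this is case~(2); or $p \not\equiv 0$, in which case $p$, being a nonzero real homogeneous cubic in two variables, factors over $\R$ into linear and irreducible-quadratic factors and hence has exactly one, two or three distinct real projective roots (counted without multiplicity). Each distinct real root $[\alpha_0 : \beta_0]$ corresponds to exactly one compatible direction in $S$, namely the family of parallel lines spanned by $\alpha_0 v_1 + \beta_0 v_2$. (A nonzero real binary cubic always has at least one real root, which is why the count cannot be zero; it clearly cannot exceed three, the degree.) This gives case~(1), and the two cases are mutually exclusive and exhaustive, proving the dichotomy.

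The one point requiring a little care --- and the step I would flag as the main obstacle --- is verifying that $p$ is genuinely of degree $\leqslant 3$ with the right homogeneity, and more importantly that $\det(\alpha v_1 + \beta v_2)$ has no term in $\alpha^3$ or $\beta^3$ that is automatically forced to vanish for \emph{structural} reasons, which could a priori reduce the generic count or change the analysis. Since $v_1$ and $v_2$ are trace-free symmetric matrices, $\det(\alpha v_1 + \beta v_2) = \alpha^3 \det v_1 + \alpha^2 \beta\, \sigma(v_1,v_2) + \alpha \beta^2\, \sigma(v_2,v_1) + \beta^3 \det v_2$ for a suitable mixed-determinant (second elementary symmetric) expression $\sigma$; the coefficients $\det v_1$ and $\det v_2$ need not vanish (trace-free does not imply degenerate), so $p$ is a bona fide binary cubic whose vanishing is a non-trivial condition on the ratio $[\alpha:\beta]$. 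Once this is in hand, the conclusion is immediate from the elementary fact about real roots of binary cubics. An equivalent and perhaps cleaner route, which I would mention as an alternative, is to dehomogenise: set $q(t) := \det(v_1 + t v_2)$ (a polynomial of degree $\leqslant 3$ in $t$) plus the separate check of the ``direction at infinity'' $v_2$ (i.e.\ whether $\det v_2 = 0$); then compatible directions in $S$ correspond to real roots of $q$ together with this extra case, and a real polynomial of degree between $1$ and $3$ has between $1$ and $3$ real roots while $q \equiv 0$ forces $S$ compatible. This makes the trichotomy in case~(1) transparent and isolates case~(2) as $q \equiv 0$ and $\det v_2 = 0$.
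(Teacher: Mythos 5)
Your proof is correct and follows essentially the same route as the paper's: choose a basis $\{v_1,v_2\}$ of $S$, observe via Lemma~\ref{lem:SymTr3-compatibility} that the compatible directions are cut out by the homogeneous binary cubic $(\alpha,\beta)\mapsto\det(\alpha v_1+\beta v_2)$, and conclude from the fact that a nonzero real binary cubic has one, two or three distinct real projective roots, while the identically zero case gives a compatible plane. The only cosmetic differences are your projective phrasing and the explicit dehomogenisation remark, which the paper handles by dividing by $x^3$ or $y^3$.
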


\begin{proof}
Let $\{e_1,e_2\}$ be a basis for $S$. By Lemma~\ref{lem:SymTr3-compatibility}, an arbitrary non-zero element of $S$, $xe_1 + ye_2$, $(x,y) \neq 0$, is compatible iff
\begin{equation*}
\det(xe_1 + ye_2)
 = 0.
\end{equation*} 
This is equivalent to
\begin{equation}
\label{eq:local6}
x^3 \det(e_1) + x^2 y \langle \cof(e_1), e_2 \rangle + x y^2 \langle e_1, \cof(e_2) \rangle + y^3 \det(e_2)
 = 0,
\end{equation}
where $\cof(e)$ is the cofactor of $e$:
\begin{multline*}
\cof \begin{pmatrix}
       e_{11} & e_{12} & e_{13} \\
       e_{21} & e_{22} & e_{23} \\
       e_{31} & e_{32} & e_{33}
      \end{pmatrix} \\
 = \begin{pmatrix}
       e_{22} e_{33} - e_{32} e_{23} & -\left( e_{21} e_{33} - e_{31} e_{23} \right) & e_{21} e_{32} - e_{31} e_{22} \\
       -\left( e_{12} e_{33} - e_{32} e_{13} \right) & e_{11} e_{33} - e_{31} e_{13} & -\left( e_{11} e_{32} - e_{31} e_{12} \right) \\
       e_{12} e_{23} - e_{22} e_{13} & -\left( e_{11} e_{23} - e_{21} e_{13} \right) & e_{11} e_{22} - e_{21} e_{12}
      \end{pmatrix}^T.
\end{multline*}
%\cite[p.~253]{Dacorogna:2007}
If the polynomial in~\eqref{eq:local6} is the zero-polynomial (i.e., $\det(e_1)=0, \langle \cof(e_1), e_2 \rangle=0$ etc.) then $S$ is compatible. Otherwise:
 
If either $\det(e_1) = 0$ or $\det(e_2) = 0$ then \eqref{eq:local6} has one or two solutions $(x,y) \neq 0$ and thus $S$ contains one or two compatible directions. Assume on the contrary that both $\det(e_1)$ and $\det(e_2) \neq 0$. Since $(x,y) \neq 0$, dividing~\eqref{eq:local6} by either $x^3$ or $y^3$ we obtain a polynomial (in either $\frac{x}{y}$ or $\frac{y}{x}$) which, being cubic, has one, two or three distinct real roots. Thus $S$ contains one, two or three compatible directions. 
\end{proof}

\begin{corollary}
%\label{cor:2D-compatible_directions}
Let $S$ be a subspace of $\Sym[c]{3}$ with $\dim(S) > 1$. Then $S$ contains at least one compatible direction.
\end{corollary}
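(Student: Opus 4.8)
The plan is to reduce the corollary to the two-dimensional case handled by Lemma~\ref{lem:2D-compatible_directions}. The key observation is that the conclusion of that lemma — that any two-dimensional subspace of $\Sym[c]{3}$ contains at least one compatible direction (whether it contains one, two, three, or, in the fully compatible case, infinitely many) — already gives what we want, provided we can find a two-dimensional subspace sitting inside $S$. Since $\dim(S) > 1$, such a subspace exists: pick any two linearly independent elements $e_1, e_2 \in S$ and let $S' := \Span\{e_1,e_2\} \cap \Sym[c]{3}$; more carefully, since $S \subseteq \Sym[c]{3}$ is itself an affine subspace, choosing two linearly independent directions in $S$ spans a two-dimensional subspace $S'$ of $\Sym[c]{3}$ with $S' \subseteq S$.

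First I would make precise what ``subspace of $\Sym[c]{3}$'' means here: $\Sym[c]{3}$ is a five-dimensional affine plane in $\Sym{3}$ (the translate of $\Sym[0]{3}$ by $\tfrac{c}{3}I$), so a ``subspace'' is an affine subspace, and ``direction'' is a one-dimensional affine subspace, as fixed in the Notation paragraph. A compatible direction in $S$ is then a direction along which the difference of any two points has zero determinant, by Lemma~\ref{lem:SymTr3-compatibility}. Second, I would take $S$ with $\dim(S) \geqslant 2$ and extract a two-dimensional affine subspace $S' \subseteq S$ — simply fix a base point $x_0 \in S$ and two linearly independent tangent vectors, and let $S'$ be the resulting $2$-plane. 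Third, I would apply Lemma~\ref{lem:2D-compatible_directions} to $S'$: in either alternative (one/two/three compatible directions, or $S'$ compatible) there is at least one compatible direction $\ell \subseteq S'$. Finally, since $S' \subseteq S$, the direction $\ell$ is also a direction in $S$, and it is compatible, which is exactly the claim.

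There is essentially no obstacle here; the only point requiring a word of care is the passage from the abstract statement of Lemma~\ref{lem:2D-compatible_directions} (which is about subspaces, and whose proof picks a basis) to its use with an affine $2$-plane that need not contain $0$. This is harmless: compatibility of a direction depends only on the direction's \emph{translation vector class} through $\det(e_1 - e_2) = 0$, so the argument in the proof of Lemma~\ref{lem:2D-compatible_directions} — parametrising by $x e_1 + y e_2$ and analysing the cubic~\eqref{eq:local6} — goes through verbatim for the linear subspace of difference vectors of $S'$, and a compatible direction there lifts to a compatible direction of $S'$ (hence of $S$). So the corollary follows immediately.

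\begin{proof}
Since $\dim(S) > 1$, $S$ contains a two-dimensional affine subspace $S'$. By Lemma~\ref{lem:2D-compatible_directions}, $S'$ contains at least one compatible direction (in case~(1) it contains one, two or three; in case~(2) every direction in $S'$ is compatible). Any direction in $S'$ is a direction in $S$, so $S$ contains at least one compatible direction.
\end{proof}
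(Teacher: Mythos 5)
Your proof is correct and is exactly the intended argument: the paper states the corollary without proof because it follows immediately from Lemma~\ref{lem:2D-compatible_directions} by restricting to any two-dimensional subspace of $S$, which is precisely what you do. Your added remark about the affine/linear distinction (compatibility depending only on difference vectors) is a reasonable clarification but not a substantive departure.
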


The following examples show that each of the possibilities referred to in Lemma~\ref{lem:2D-compatible_directions} can occur:

\begin{example}[One compatible direction] \par \indent
The determinant of these matrices is proportional to $x(x^2+y^2)$ so the compatible direction is $x=0$:
\begin{equation*}
\left\{ \left. \begin{pmatrix} x & 0 & 0 \\ 0 & x & y \\ 0 & y & -2x \end{pmatrix} \right| x,y \in \R \right\}.
\end{equation*}
The determinant of these matrices is proportional to $x^3$ so the compatible direction is $x=0$:
\begin{equation*}
\left\{ \left. \begin{pmatrix} y & x & x \\ x & -y & x \\ x & x & 0 \end{pmatrix} \right| x,y \in \R \right\}.
\end{equation*} 
\end{example}

\begin{example}[Two compatible directions]
The determinant of these matrices is proportional to $xy^2$ so the two compatible directions are $x=0$ and $y=0$:
\begin{equation*}
\left\{ \left. \begin{pmatrix} x & 0 & 0 \\ 0 & -x & y \\ 0 & y & 0 \end{pmatrix} \right| x,y \in \R \right\}.
\end{equation*}
\end{example}

\begin{example}[Three compatible directions] \par \indent
The determinant of these matrices is proportional to $xy(x+y)$ so the three compatible directions are $x=0$, $y=0$ and $x=-y$:
\begin{equation*}
\left\{ \left. \begin{pmatrix} x & 0 & 0 \\ 0 & y & 0 \\ 0 & 0 & -x-y \end{pmatrix} \right| x,y \in \R \right\}. 
\end{equation*}
The determinant of these matrices is proportional to $x(x^2-y^2)$ so the three compatible directions are $x=0$, $x=y$ and $x=-y$:
\begin{equation*}
\left\{ \left. \begin{pmatrix} -2x & 0 & 0 \\ 0 & x & y \\ 0 & y & x \end{pmatrix} \right| x,y \in \R \right\}. 
\end{equation*}
\end{example}

\begin{example}[Compatible (two-dimensional) plane]
\begin{gather*}
\left\{ \left. \begin{pmatrix} 0 & 0 & 0 \\ 0 & x & y \\ 0 & y & -x \end{pmatrix} \right| x,y \in \R \right\}, \quad
\left\{ \left. \begin{pmatrix} 0 & x & y \\ x & 0 & 0 \\ y & 0 & 0 \end{pmatrix} \right| x,y \in \R \right\}.
\end{gather*}
\end{example}

We note that Lemma~\ref{lem:2D-compatible_directions} follows also from the following characterisation of real homogeneous cubic polynomials in two variables:

\begin{remark} %[Canonical forms of real homogeneous cubic polynomials in two variables]
\label{rem:2D-compatible_directions}
A homogeneous cubic polynomial on $\R^2$ can, by an appropriate choice of basis, be written in precisely one of the five following forms cf.\ eg.~\cite[Sec.23, p.263-6 ]{Gurevich:1964},\cite{Weinberg:1988p655} or~\cite[p.28]{Olver:1999}:
\begin{enumerate}
\item $x(x^2+y^2)$,
\item $x^3$,
\item $xy^2$,
\item $xy(x+y)$ or, equivalently, $x(x^2-y^2)$,
\item $0$.
\end{enumerate}
The examples above illustrate these possibilities.
\end{remark}

Lemma~\ref{lem:2D-compatible_directions} shows that from the perspective of symmetrised rank-one convexity there are four kinds of  two-dimensional affine subspaces of $\Sym[c]{3}$, namely those for which the compatible cone is
\begin{enumerate}
\item a line, \label{it:2D-1-line}
\item the union of two (distinct) lines, \label{it:2D-2-lines}
\item the union of three (distinct) lines, and \label{it:2D-3-lines}
\item the subspace itself. \label{it:2D-plane}
\end{enumerate}

The next step is to investigate compatible hulls in these subspaces. Cases~\eqref{it:2D-1-line} and~\eqref{it:2D-plane} are the simplest: In Case~\eqref{it:2D-1-line} compatible hulls are obtained by convexifying in the compatible direction, in Case~\eqref{it:2D-plane} compatible hulls are identical to convex hulls.

Cases~\eqref{it:2D-2-lines} and~\eqref{it:2D-3-lines} are reminiscent of separate convexity in $\R^2$ \cite{Tartar:1993p191,Kreiner:2003}: Symmetrised rank-one convexity is \emph{geometrically} identical to separate convexity in $\R^2$ in Case~\eqref{it:2D-2-lines}, and is similar to separate convexity in $\R^2$ in Case~\eqref{it:2D-3-lines}. As an aside, the inclusion-minimal configurations~\cite{Szekelyhidi:2005p253} in these situations are either $T_3s$ or $T_4s$ as might be expected. Only $T_3$s are relevant to our immediate purposes so we discuss them in Section~\ref{sec:T3} and leave the rest for \cite{Chenchiah-Schloemerkemper-PRSL}.

%---------
\section{$T_3$s}
\label{sec:T3}
 
$T3$s occur when there are precisely three directions in the compatible cone. Our definition is equivalent/identical to earlier definitions in the literature such as \cite{Bhattacharya:1994p843}[p.~855, (3.9) on p.~862 and Fig. 3.1b on p.~856].

\begin{definition}[$T_3$]  \label{def:T3-1}
Three points $e_1, e_2, e_3 \in \Sym[c]{3}$ form a $T_3$ if
\begin{enumerate}
\item They are pairwise incompatible, and \label{it:local3}
\item There exist $e_{1,1} \in (e_2,e_3), e_{2,2} \in (e_3,e_1), e_{3,3} \in (e_1,e_2)$ such that $e_{i,i} \compatible e_i$, $i=1,2,3$. \label{it:local4}
\end{enumerate}
\end{definition}

A schematic representation of a $T_3$ is shown in Figure~\ref{fig:T3}.

\begin{remark} \label{rem:T3}
Note that $\Span\{e_1 - e_{1,1}\}$, $\Span\{e_2 - e_{2,2}\}$ and $\Span\{e_3 - e_{3,3}\}$ are distinct compatible directions. By Lemma~\ref{lem:2D-compatible_directions} there are no others. It follows that $e_{1,1}$, $e_{2,2}$ and $e_{3,3}$ are unique. \label{it:local5}
\end{remark}

\begin{figure}
\begin{center} 
\input 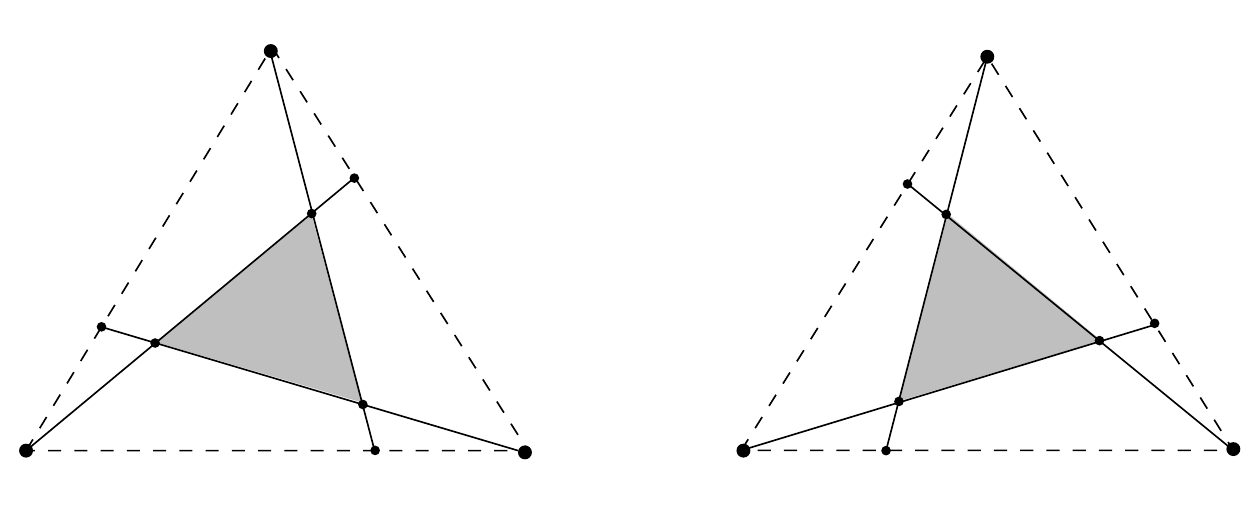_t
\caption{Schematic representations of points forming $T_3$s.} 
\label{fig:T3}
\end{center}
\end{figure}

Next we show that it is easy to check whether three points form a $T_3$:

\begin{lemma}
\label{lem:T3}
Three points $e_1, e_2, e_3 \in \Sym[0]{3}$ form a $T_3$ if and only if
\begin{equation}
\label{eq:local7}
\sign \det(e_1 - e_2) = \sign \det(e_2 - e_3) = \sign \det(e_3 - e_1) \neq 0.
\end{equation}
\end{lemma}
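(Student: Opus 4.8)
The plan is to work on the two-dimensional affine plane $P := \aff\Span\{e_1,e_2,e_3\}$ (assuming the three points are affinely independent; if they are collinear they cannot be pairwise incompatible in a nontrivial way, which is a quick separate check), and to read off the compatible directions of $P$ from the restriction of the determinant to $P$. Parametrise $P$ by barycentric-type coordinates: a general point is $e(s,t) = e_1 + s(e_2-e_1) + t(e_3-e_1)$. Then $e(s,t) - e(s',t')$ is a fixed linear combination of $v := e_2 - e_1$ and $w := e_3 - e_1$, so by Lemma~\ref{lem:SymTr3-compatibility} compatibility of two points of $P$ is governed by the single homogeneous cubic $C(\alpha,\beta) := \det(\alpha v + \beta w)$, expanded as in~\eqref{eq:local6}. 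The direction spanned by $\alpha v + \beta w$ is compatible iff $C(\alpha,\beta)=0$.

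First I would translate the three hypotheses of Definition~\ref{def:T3-1} into statements about $C$. Pairwise incompatibility of $e_1,e_2,e_3$ says $C(1,0)=\det(-v)\neq 0$, $C(0,1)=\det(w)\neq 0$, and $C(-1,1)=\det(w-v)=\det(e_3-e_2)\neq 0$ — i.e. $C$ does not vanish at the three "edge directions" $(1,0)$, $(0,1)$, $(-1,1)$. Next, the existence of $e_{1,1}\in(e_2,e_3)$ with $e_{1,1}\compatible e_1$ means there is a point on the open segment joining $v$-direction to $w$-direction whose difference with the vertex $e_1$ (direction $(\alpha,\beta)$ with $\alpha,\beta$ of appropriate signs) is compatible, i.e. $C$ has a zero in the open "cone" between two of the edge directions; similarly for the other two. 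So a $T_3$ is equivalent to: $C$ is nonzero at each of the three edge directions but has a zero strictly between each consecutive pair of them. Since $C$ is a real homogeneous cubic in two variables, it has at most three real projective roots (counting the case $C\equiv 0$, excluded by the incompatibility at edge directions); having one root strictly inside each of the three open sectors cut out by the edge directions forces exactly three simple real roots, one per sector, and forces $C$ to change sign across each. The final step is to observe that "$C$ changes sign across each edge direction without vanishing there" is, by the intermediate value theorem applied along each edge direction pair, exactly the statement that $C(1,0)$, $C(-1,1)$, $C(0,1)$ — equivalently $\det(e_1-e_2)$, $\det(e_2-e_3)$, $\det(e_3-e_1)$ up to overall sign conventions on the differences — all have the same sign and are nonzero, which is~\eqref{eq:local7}. (One must be slightly careful that the three edge directions $(1,0),(-1,1),(0,1)$ sit in the correct cyclic order on the projective line and that the signs $\det(e_i-e_j)$ with consistent orientation of the differences match the three values of $C$; this is a short bookkeeping computation, and the sign of $\det$ on $\Sym[0]{3}$ is well-defined because changing $e_i-e_j$ to $e_j-e_i$ multiplies the $3\times 3$ determinant by $(-1)^3=-1$, so the cyclic product $\det(e_1-e_2)\det(e_2-e_3)\det(e_3-e_1)$ and hence the common-sign condition is orientation-independent.)

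For the converse direction I would argue: given~\eqref{eq:local7}, the three values $C$ takes at the edge directions are nonzero (so $e_1,e_2,e_3$ are pairwise incompatible, giving~(\ref{it:local3})) and of a common sign; by the intermediate value theorem $C$ has at least one root strictly inside each of the three open sectors between consecutive edge directions; a real homogeneous cubic in two variables has at most three projective roots, so there is exactly one root per sector, and the root in the sector "facing vertex $e_i$" produces, after rescaling, the required point $e_{i,i}$ on the opposite open edge $(e_j,e_k)$ with $e_{i,i}\compatible e_i$, giving~(\ref{it:local4}). Thus the three points form a $T_3$.

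The main obstacle I expect is the sign/orientation bookkeeping: matching the six possible edge-direction representatives $(\pm 1,0),(0,\pm1),(\mp1,\pm1)$ and the orientations of the differences $e_i-e_j$ against the hypothesis~\eqref{eq:local7}, and correctly handling the cyclic ordering of the three directions on $\mathbb{RP}^1$ so that "one root in each of the three sectors" really is forced by three common-sign values (rather than, say, two roots in one sector and none in another). Everything else — the reduction to a single cubic via Lemma~\ref{lem:SymTr3-compatibility}, the uniqueness of $e_{1,1},e_{2,2},e_{3,3}$ (Remark~\ref{rem:T3}), and the "at most three roots" bound — is routine once the parametrisation is set up. I would also want to dispatch the degenerate possibility that $e_1,e_2,e_3$ are affinely dependent at the very start: if they are collinear, then Lemma~\ref{lem:2D-compatible_directions} does not directly apply, but one checks that on a line through three distinct points pairwise incompatibility together with condition~(\ref{it:local4}) is impossible, while simultaneously~\eqref{eq:local7} cannot hold either (e.g. because $\det$ restricted to a line is a cubic with a root forced between any two sign-equal values), so the equivalence holds vacuously.
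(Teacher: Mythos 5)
Your proposal is correct and follows essentially the same route as the paper: both reduce the question, via Lemma~\ref{lem:SymTr3-compatibility}, to the sign pattern of a real cubic obtained by restricting $\det$ to the plane of the three strains, using the intermediate value theorem for sufficiency and the at-most-three-roots bound for necessity. The only cosmetic difference is that you phrase everything projectively via the homogeneous cubic $C(\alpha,\beta)=\det(\alpha v+\beta w)$ and prove necessity directly by sign alternation around the circle of directions, whereas the paper restricts to the pencil $\lambda\mapsto\det(\lambda e_1+(1-\lambda)e_2-e_3)$ and derives a contradiction from a forced fourth root; the sign bookkeeping you flag does check out.
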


\begin{proof}
Let  $e_1, e_2, e_3 \in \Sym[0]{3}$ satisfy~\eqref{eq:local7}. From Lemma~\ref{lem:SymTr3-compatibility} it follows immediately  that Definition~\ref{def:T3-1}\eqref{it:local3} is satisfied. Consider the polynomial $[0,1] \ni \lambda \mapsto \det( (\lambda e_1 + (1-\lambda) e_2 ) - e_3 )$. By assumption, this polynomial has opposite signs at $0$ and $1$. In conjunction with Lemma~\ref{lem:SymTr3-compatibility} this implies:
\begin{subequations} \label{eq:T3-scaffold}
\begin{alignat}{2} \label{eq:T3-scaffolda}
\exists \lambda_{12} \in (0,1), \quad
 &e_{3,3} &:= (\lambda_{12} e_1 + (1-\lambda_{12}) e_2 ) &\compatible e_3.
\end{alignat}
Similarly,
\begin{alignat}{2} \label{eq:T3-scaffoldb}
\exists \lambda_{23} \in (0,1), \quad
 &e_{1,1} &:= (\lambda_{23} e_2 + (1-\lambda_{23}) e_3 ) &\compatible e_1, \\
\exists \lambda_{31} \in (0,1), \quad
 &e_{2,2} &:= (\lambda_{31} e_3 + (1-\lambda_{31}) e_1 ) &\compatible e_2. \label{eq:T3-scaffoldc}
\end{alignat}
\end{subequations}
This shows that Definition~\ref{def:T3-1}\eqref{it:local4} is satisfied. Thus $\{e_1, e_2, e_3\}$ forms a $T_3$.% (cf. Figure~\ref{fig:T3}).  

Conversely, assume that $\{e_1, e_2, e_3\} \subset \Sym[0]{3}$
form a $T_3$. Then, from Definition~\ref{def:T3-1}\eqref{it:local3}, $\sign \det(e_1 - e_2), \sign \det(e_2 - e_3), \sign \det(e_3 - e_1) \neq 0$. Suppose it is \emph{not} the case that $\sign \det(e_1 - e_2) = \sign \det(e_2 - e_3) = \sign \det(e_3 - e_1)$. Then, relabelling the points if necessary, we have
\begin{equation}
\label{eq:local8}
\det(e_1 - e_3), \det(e_2 - e_3)
 > 0.
\end{equation}
Now consider the cubic polynomial $\R \ni \lambda \mapsto \det( (\lambda e_1 + (1-\lambda) e_2 ) - e_3 )$. By assumption there are three compatible directions passing through $e_3$, which are parallel to $[e_1,e_{1,1}]$, $[e_2,e_{2,2}]$ and $[e_3,e_{3,3}]$; see Figure~\ref{fig:T3}. Thus the cubic polynomical has three distinct real roots, one in each of $(-\infty,0)$, $(0,1)$ and $(1,\infty)$. Thus each root is simple; it follows that the polynomial changes sign around each root. 

On the other hand from~\eqref{eq:local8} the polynomial is positive at $0$ and $1$. It follows that it has one further root in $(0,1)$, which is a contradiction. We conclude that $\sign \det(e_1 - e_2) = \sign \det(e_2 - e_3) = \sign \det(e_3 - e_1)$.
\end{proof}

\begin{definition}[Vertices of a $T_3$]
Let $e_1, e_2, e_3 \in \Sym[c]{3}$ form a $T_3$ as in Definition~\ref{def:T3-1}. Then $e_1, e_2, e_3$ are referred to as the vertices of this $T_3$.
\end{definition}

When $e_1, e_2, e_3 \in \Sym[c]{3}$ form a $T_3$ then their symmetrised rank-one convex hull $\RH(\{e_1, e_2, e_3\})$ is (also) called a $T_3$. The context will make clear whether ``$T_3$'' refers to the set of three strains $\{e_1, e_2, e_3\}$ or to their symmetrised rank-one convex hull $\RH(\{e_1, e_2, e_3\})$.

It is known (cf., e.g., \cite[\S3]{Bhattacharya:1994p843}) that
\begin{equation} \label{eq:T3-hull}
\RH(\{e_1, e_2, e_3\})
 \supseteq [e_1,e_{1,2}] \cup [e_2,e_{2,3}] \cup [e_3,e_{3,1}] \cup \CH(\{e_{1,2}, e_{2,3}, e_{3,1}\}),
\end{equation}
where $e_{1,2}$, $e_{2,3}$, $e_{3,1}$ are the nodes of the $T_3$ (Definition~\ref{def:T3-nodes} below), see Figure~\ref{fig:T3-hull}. However for the convenience of the reader we provide a proof of this in Proposition~\ref{prop:T3-hull} below.

\begin{figure}
\begin{center} 
\input 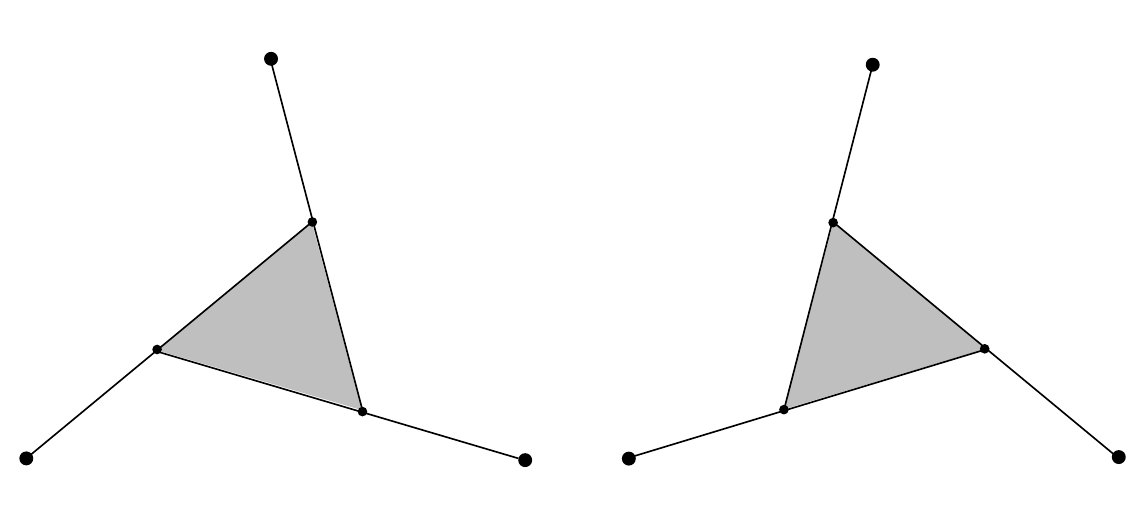_t
\caption{Schematic representations of $T_3$s, i.e. the rank-one convex hull of three strains forming a $T_3$.}
\label{fig:T3-hull}
\end{center}
\end{figure}

\begin{definition}[Nodes of a $T_3$] \label{def:T3-nodes}
Let $e_1, e_2, e_3 \in \Sym[c]{3}$ form a $T_3$. Let $e_{1,1}$, $e_{2,2}$, $e_{3,3}$ be as defined in~\eqref{eq:T3-scaffolda}--\eqref{eq:T3-scaffoldc}. We define the nodes of the $T_3$ to be the three points
\begin{equation*}
 e_{i,j} := [e_i,e_{i,i}] \cap [e_j,e_{j,j}], \quad i \neq j,\ i,j=1,2,3
\end{equation*}
 (see Figure~\ref{fig:T3}).
\end{definition}

From Definition~\ref{def:T3-1}\eqref{it:local4} the nodes of a $T_3$ are pair-wise compatible and $e_{i,j}$ is compatible with $e_i$ and $e_j$.

Later we will encounter symmetric $T_3$s and similar $T_3$s as defined below:

\begin{definition}[Symmetric $T_3$s] \label{def:T3-symmetry}
Let $e_1, e_2, e_3 \in \Sym[c]{3}$ form a $T_3$. Let $\lambda_{12}$, $\lambda_{23}$, $\lambda_{31}$ be as defined in~\eqref{eq:T3-scaffolda}--\eqref{eq:T3-scaffoldc}. Then the $T_3$ is symmetric if $\lambda_{12} = \lambda_{23} = \lambda_{31}$.
\end{definition}

The following remark is elementary but we explicitly state it since it arises frequently in applications (cf.\ Section~\ref{sec:nonlaminates}):

\begin{remark} \label{rem:T3-symmetry}
Let $R \in SO(3)$ such that $R^3$ is the identity. Let $e_1 \in \Sym[c]{3}$, $e_2 := R^T e_1 R$ and $e_3 := R^T e_2 R$. Then $\det(e_1-e_2) = \det(e_2-e_3) = \det(e_3-e_1)$ and if this is non-zero then $e_1$, $e_2$, $e_3$ form a symmetric $T_3$.
\end{remark}

\begin{definition}[Similar $T_3$s] \label{def:T3-similarity}
Let $e_1, e_2, e_3 \in \Sym[c]{3}$ and $e'_1, e'_2, e'_3 \in \Sym[c]{3}$ form $T_3$s. Let $\lambda_{ij}$, $\lambda'_{ij}$, $i \neq j$, $i,j=1,2,3$ be defined as in~\eqref{eq:T3-scaffold}.
\begin{enumerate}
\item We say that these $T_3$s are similar if, for some permutation $\sigma \colon \{1,2,3\} \to \{1,2,3\}$, $\lambda'_{12} = \lambda_{\sigma(1)\sigma(2)}$, $\lambda'_{23} = \lambda_{\sigma(2)\sigma(3)}$ and $\lambda'_{31} = \lambda_{\sigma(3)\sigma(1)}$. \label{it:local11}
\item Corresponding points in these $T_3$s are points with the same barycentric coordinates: For $\mu_1, \mu_2, \mu_3 \in [0,1]$, $\sum_{i=1}^3 \mu_i = 1$, $\mu_1 e_1 + \mu_2 e_2 + \mu_3 e_3$ and $\mu_1 e'_{\sigma(1)} + \mu_2 e'_{\sigma(2)} + \mu_3 e'_{\sigma(3)}$ are corresponding points where $\sigma$ is the permutation in item~\eqref{it:local11} above.
\end{enumerate}
\end{definition}
 
We now prove \eqref{eq:T3-hull}:%, see also Figure~\ref{fig:T3-hull}.

\begin{proposition}
\label{prop:T3-hull}
Let $e_1, e_2, e_3 \in \Sym[c]{3}$ form a $T_3$. Then
\begin{equation*}
\RH(\{e_1, e_2, e_3\})
\supseteq [e_1,e_{1,2}] \cup [e_2,e_{2,3}] \cup [e_3,e_{3,1}] \cup \CH(\{e_{1,2}, e_{2,3}, e_{3,1}\}). \tag{\ref{eq:T3-hull}}
\end{equation*}
\end{proposition}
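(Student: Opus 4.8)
The plan is to show that each of the three segments and the central triangle consists of points that must lie in $\RH(\{e_1,e_2,e_3\})$, using the characterisation~\eqref{eq:sroch}: a point $e$ belongs to $\RH(E)$ if and only if $f(e)=0$ for every symmetrised rank-one convex $f\colon \Sym{3}\to[0,\infty)$ with $f(E)=\{0\}$. So fix such an $f$, vanishing on $\{e_1,e_2,e_3\}$; I must show $f$ vanishes on the right-hand side of~\eqref{eq:T3-hull}.

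First I would handle the segments $[e_i,e_{i,i}]$. Recall from Definition~\ref{def:T3-1}\eqref{it:local4} and Remark~\ref{rem:T3} that $e_{1,1}\in(e_2,e_3)$ with $e_{1,1}\compatible e_1$, and likewise for the other two. Since $e_{1,1}$ lies on the compatible segment $[e_2,e_3]$ — wait, $[e_2,e_3]$ need not be compatible; rather $e_{1,1}=\lambda_{23}e_2+(1-\lambda_{23})e_3$ is a convex combination, so I cannot directly use rank-one convexity of $f$ along $[e_2,e_3]$. Instead the key point is that $f$ need not vanish at $e_{1,1}$ a priori. So I reorganise: the nodes $e_{i,j}$ are the real object. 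By Definition~\ref{def:T3-nodes} and the remark following it, $e_{1,2}$ lies on $[e_1,e_{1,1}]$ and on $[e_2,e_{2,2}]$, and $e_{1,2}\compatible e_1$, $e_{1,2}\compatible e_2$; similarly $e_{2,3}\compatible e_2$, $e_{2,3}\compatible e_3$, and $e_{3,1}\compatible e_3$, $e_{3,1}\compatible e_1$; moreover the three nodes are pairwise compatible. The strategy is then: (i) the closed triangle $[e_1,e_{1,2}]\cup\dots$ together with the inner triangle forms a "scaffold" on which $f$ propagates; (ii) show $f(e_{1,2})=f(e_{2,3})=f(e_{3,1})=0$; (iii) conclude by rank-one convexity along the (compatible) sides of the inner triangle.

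Step (ii) is the heart. Here I would argue as follows: consider the closed triangle $\CH(\{e_1,e_2,e_3\})$ and the six subtriangles into which the scaffold divides it. The outer three triangles are $\CH(\{e_1,e_{1,2},e_{3,1}\})$, $\CH(\{e_2,e_{2,3},e_{1,2}\})$, $\CH(\{e_3,e_{3,1},e_{2,3}\})$. Since $e_{1,2}\compatible e_1$ and $e_{3,1}\compatible e_1$, and on a two-dimensional affine subspace the compatible cone at $e_1$ is (by Lemma~\ref{lem:2D-compatible_directions}) a union of at most three lines, the two directions $e_{1,2}-e_1$ and $e_{3,1}-e_1$ are two of those compatible directions — these are exactly two of the three compatible directions identified in Remark~\ref{rem:T3}. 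Now, crucially, I want $f=0$ on $[e_1,e_{1,2}]$ and on $[e_1,e_{3,1}]$: but $f$ vanishes at $e_1$ only, not yet at the nodes, so this is circular unless I exploit boundedness/nonnegativity globally. The right move: use that $f\geqslant 0$ and $f(e_i)=0$ to get, by rank-one convexity along the compatible line through $e_i$ in the plane, that $f$ is nondecreasing as one moves away from $e_i$ along that line, hence in particular $f$ is zero on the segment from $e_i$ up to the nearest point where... no. The actual classical argument (as in Bhattacharya) is a fixed-point/iteration: write $e_{1,1}$ as a convex combination of $e_2,e_3$ and $e_1$ as a combination involving $e_{1,1}$ and $e_{1,2}$; one gets a linear system forcing $f(e_{1,1})\leqslant$ (combination of $f(e_2),f(e_3)$) $=0$ and $f(e_1)\leqslant$ (combination of $f(e_{1,1}),f(e_{1,2})$), cyclically; chaining the three inequalities and using $f\geqslant 0$, $f(e_i)=0$ yields $f(e_{1,1})=f(e_{2,2})=f(e_{3,3})=0$ and then $f(e_{i,j})=0$. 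Concretely: $0=f(e_1)\leqslant \mu f(e_{1,1})+(1-\mu)f(e_{1,2})$ along the compatible segment $[e_{1,1},e_{1,2}]$ — here I need $e_{1,1}\compatible e_{1,2}$, which holds since both lie on the compatible line through $e_1$ — hence also $[e_{1,1},e_{1,2}]$ is compatible and $f$ is convex along it; but wait $e_1$ lies between $e_{1,2}$ and $e_{1,1}$? From the geometry $e_{1,2}\in(e_1,e_{1,1})$, so actually $0=f(e_{1,2})$? No: $e_{1,2}$ is between $e_1$ and $e_{1,1}$, so $f(e_{1,2})\leqslant \max$ of $f$ at the endpoints by convexity along this compatible segment, i.e. $f(e_{1,2})\leqslant\max\{f(e_1),f(e_{1,1})\}=f(e_{1,1})$. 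Meanwhile $e_{1,1}\in(e_2,e_3)$ — but $[e_2,e_3]$ is incompatible, so I cannot bound $f(e_{1,1})$ from the segment $[e_2,e_3]$. This is exactly where the $T_3$ structure is essential and where a naive one-step estimate fails: one needs the simultaneous three-way system. I would set up the $3\times3$ linear inequality system for $(f(e_{1,1}),f(e_{2,2}),f(e_{3,3}))$ coming from the three relations "$e_{i,i}$ is a convex combination of the other two vertices and the opposite nodes", show the relevant matrix is such that the only nonnegative solution with the constraints is zero, and conclude.

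Finally, step (iii): once $f(e_{1,2})=f(e_{2,3})=f(e_{3,1})=0$, the inner triangle $\CH(\{e_{1,2},e_{2,3},e_{3,1}\})$ has all three edges compatible (the nodes are pairwise compatible), so by Lemma~\ref{lem:pairwise-compatible} (or directly Theorem~\ref{thm:edge-compatible}) $\LH(\{e_{1,2},e_{2,3},e_{3,1}\})=\CH(\{e_{1,2},e_{2,3},e_{3,1}\})$, whence $f$ vanishes on the whole inner triangle; and $f$ vanishes on each segment $[e_i,e_{i,j}]$ — e.g. $[e_1,e_{1,2}]$ is compatible (it lies along a compatible direction through $e_1$) with $f=0$ at both endpoints, so $f=0$ on it. Since $f$ was an arbitrary nonnegative rank-one convex function vanishing on $\{e_1,e_2,e_3\}$, the characterisation~\eqref{eq:sroch} gives the claimed inclusion. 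The main obstacle is step (ii): extracting $f(e_{i,i})=0$ from the coupled system of three convexity inequalities while only knowing $f\geqslant0$ and $f(e_i)=0$ — this is where the genuinely "triple" nature of the $T_3$ enters and a single pairwise estimate is insufficient; I expect to spend most of the proof carefully writing out the barycentric-coordinate identities that express each $e_{i,i}$ and each $e_i$ as convex combinations along compatible segments, and then a short positivity argument to kill the system.
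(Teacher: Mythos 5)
Your framework (fix a non-negative symmetrised rank-one convex $f$ vanishing on $\{e_1,e_2,e_3\}$, show it vanishes at the three nodes, then finish via pairwise compatibility of the nodes and Lemma~\ref{lem:pairwise-compatible}) is the right one, and your step (iii) is correct. But the plan you finally settle on for the crucial step (ii) has a genuine gap: you propose to extract $f(e_{1,1})=f(e_{2,2})=f(e_{3,3})=0$ from a $3\times3$ system of convexity inequalities. This cannot work, for the very reason you flag and then do not resolve: $e_{i,i}$ lies on the \emph{incompatible} segment joining the other two vertices, so symmetrised rank-one convexity gives no upper bound on $f(e_{i,i})$ at all, and the only inequalities available along the compatible segments $[e_i,e_{i,i}]$ bound interior values \emph{by} $f(e_{i,i})$, never the reverse; the system is one-sided and forces nothing. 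Worse, $f(e_{i,i})=0$ is not a correct intermediate target: the points $e_{i,i}$ need not lie in $\RH(\{e_1,e_2,e_3\})$, so there exist admissible $f$ with $f(e_{i,i})>0$.

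The missing observation is that each compatible segment $[e_i,e_{i,i}]$ contains \emph{two} nodes, so one should work on the sub-segment from $e_i$ to the farther node, which has the nearer node in its relative interior and has both endpoints among the points being tracked. Concretely, in the orientation of the left-hand figure one has $e_{3,1}\in(e_1,e_{1,2})$, and since $e_1\compatible e_{1,2}$, convexity of $f$ on $[e_1,e_{1,2}]$ together with $f(e_1)=0$ and $f\geqslant 0$ gives $f(e_{3,1})\leqslant(1-\lambda)f(e_{1,2})\leqslant f(e_{1,2})$ for some $\lambda\in(0,1)$; cyclically $f(e_{1,2})\leqslant f(e_{2,3})$ and $f(e_{2,3})\leqslant f(e_{3,1})$. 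The closed chain forces the three node values to coincide, and then the strict factor $1-\lambda<1$ in any one inequality forces the common value to be $0$. The points $e_{i,i}$ never enter. This is exactly the paper's argument (following Kreiner et al.); without the two-nodes-per-segment observation your step (ii) does not close.
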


\begin{proof}
We use the same strategy as~\cite[Proposition 2.7]{Kreiner:2003}. Let $e_{1,2}$, $e_{2,3}$, $e_{3,1}$ be related to $e_1$, $e_2$, $e_3$ as in the left-hand side of Figures~\ref{fig:T3} and~\ref{fig:T3-hull}; the proof in the other case is similar.

Let $f$ be non-negative, symmetrised rank-one convex and vanish on $\{ e_1, e_2, e_3 \}$. Since $e_1 \compatible e_{1,2}$, it follows from the convexity of $f$ on $[e_1,e_{1,2}]$ that $f(e_{1,2}) \geqslant f(e_{3,1})$. Similarly, since $e_2 \compatible e_{2,3}$, it follows that $f(e_{2,3}) \geqslant f(e_{1,2})$. Finally from $e_3 \compatible e_{3,1}$ it follows that $f(e_{3,1}) \geqslant f(e_{2,3})$. In other words, $f(e_{2,3}) \geqslant f(e_{1,2}) \geqslant f(e_{3,1}) \geqslant f(e_{2,3})$. We conclude that $f(e_{2,3}) = f(e_{1,2}) = f(e_{3,1})= f(e_{2,3})$. But since $f(e_1)=0$, convexity of $f$ on $[e_1,e_{1,2}]$ shows that in fact $f(e_{1,2}) = f(e_{2,3}) = f(e_{3,1}) = 0$. From~\eqref{eq:sroch} we conclude that
\begin{equation*}
[e_1,e_{1,2}] \cup [e_2,e_{2,3}] \cup [e_3,e_{3,1}] \subset \RH(\{e_1,e_2,e_3\}).
\end{equation*}
The last step is to notice that since $e_{1,2}$, $e_{2,3}$, $e_{3,1}$ are pair-wise compatible, from Lemma~\ref{lem:pairwise-compatible}, 
\begin{equation*}
\CH(\{e_{1,2}, e_{2,3}, e_{3,1}\}) \subset \RH(\{e_1,e_2,e_3\}),
\end{equation*}
which completes the proof.
\end{proof}

We end this section by giving an example of the utility of two-dimensional results in higher dimensions.

\begin{lemma}[A three-dimensional continuum of $T_3$s]
\label{lem:T3-3D}
Let  $e_1, e_2, e_3 \in \Sym[c]{3}$ form a $T_3$. Let $e_0 \in \Sym[c]{3}$ such that $e_i \compatible e_0$ for $i=1,2,3$. For $\lambda \in \R$ and $i=1,2,3$ let
\begin{align*}
e_i^\lambda
 &:= \lambda e_0 + (1-\lambda)e_i.
\end{align*}
Then $e_1^\lambda, e_2^\lambda, e_3^\lambda$ also form a $T_3$ and 
\begin{equation} \label{eq:local12}
\bigcup_{\lambda \in [0,1]} \RH(\{e_1^\lambda, e_2^\lambda, e_3^\lambda\})
 \subseteq \RH(\{e_0,e_1,e_2,e_3\})
\end{equation}
which, when $e_0 \notin \aff \Span \{ e_1, e_2, e_3 \}$, is a three-dimensional continuum of $T_3$s.
\end{lemma}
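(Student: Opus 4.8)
The plan is to verify the three defining ingredients separately: that each triple $e_1^\lambda, e_2^\lambda, e_3^\lambda$ forms a $T_3$, that the union on the left of~\eqref{eq:local12} is contained in $\RH(\{e_0,e_1,e_2,e_3\})$, and that (under the genericity hypothesis on $e_0$) this union is genuinely three-dimensional. For the first ingredient I would use Lemma~\ref{lem:T3} (after translating so that the constant trace is $0$, which is harmless). The key computation is that $e_i^\lambda - e_j^\lambda = (1-\lambda)(e_i - e_j)$, so $\det(e_i^\lambda - e_j^\lambda) = (1-\lambda)^3 \det(e_i - e_j)$. Hence for $\lambda \neq 1$ the three determinants $\det(e_i^\lambda - e_j^\lambda)$ are obtained from $\det(e_i - e_j)$ by multiplication by the common factor $(1-\lambda)^3$, which does not change signs; since $e_1,e_2,e_3$ form a $T_3$, condition~\eqref{eq:local7} holds for them and therefore also for $e_1^\lambda, e_2^\lambda, e_3^\lambda$, so the latter form a $T_3$ as well. (For $\lambda = 1$ all three points coincide with $e_0$ and there is nothing in the union; the statement is about $\lambda \in [0,1]$ but only $\lambda \in [0,1)$ contributes genuine $T_3$s, and $\lambda = 0$ recovers the original triple.)

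For the inclusion~\eqref{eq:local12} I would argue via the characterisation~\eqref{eq:sroch} of $\RH$: let $f$ be non-negative, symmetrised rank-one convex, with $f(e_0) = f(e_1) = f(e_2) = f(e_3) = 0$. I must show $f$ vanishes on $\RH(\{e_1^\lambda, e_2^\lambda, e_3^\lambda\})$ for each $\lambda \in [0,1]$. First, since $e_i \compatible e_0$, the segment $[e_0, e_i]$ is compatible, so convexity of $f$ along it together with $f(e_0) = f(e_i) = 0$ forces $f(e_i^\lambda) = 0$ for all $\lambda \in [0,1]$ and $i=1,2,3$. Thus $f$ is a non-negative symmetrised rank-one convex function vanishing on the three vertices $e_1^\lambda, e_2^\lambda, e_3^\lambda$ of a $T_3$, so by~\eqref{eq:sroch} it vanishes on $\RH(\{e_1^\lambda, e_2^\lambda, e_3^\lambda\})$. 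Since $f$ was arbitrary, $\RH(\{e_1^\lambda, e_2^\lambda, e_3^\lambda\}) \subseteq \RH(\{e_0,e_1,e_2,e_3\})$, and taking the union over $\lambda$ gives~\eqref{eq:local12}. (Alternatively one can observe directly that $e_i^\lambda \in \LH(\{e_0, e_i\}) \subseteq \RH(\{e_0,e_1,e_2,e_3\})$ and that $\RH$ applied to a subset of $\RH(\{e_0,e_1,e_2,e_3\})$ stays inside it, using that $\RH$ is idempotent and monotone; I would use whichever phrasing the paper's conventions make cleanest.)

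For the dimension count, assume $e_0 \notin \aff\Span\{e_1,e_2,e_3\}$. Each $\RH(\{e_1^\lambda, e_2^\lambda, e_3^\lambda\})$ is a non-trivial $T_3$ and so has non-empty relative interior inside the two-dimensional plane $\aff\Span\{e_1^\lambda,e_2^\lambda,e_3^\lambda\}$; by~\eqref{eq:T3-hull} it contains, for instance, the interior of the triangle $\CH(\{e_{1,2}^\lambda, e_{2,3}^\lambda, e_{3,1}^\lambda\})$, which is two-dimensional. As $\lambda$ varies over $[0,1)$ these planes sweep out a three-dimensional region: the plane for parameter $\lambda$ is the image of $\aff\Span\{e_1,e_2,e_3\}$ under the affine contraction toward $e_0$ with ratio $1-\lambda$, and since $e_0$ is not in that plane, distinct values of $\lambda$ give parallel but distinct (hence disjoint) planes. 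One way to make this precise is to pick a basis adapted to the decomposition $\Sym[c]{3} = \aff\Span\{e_1,e_2,e_3\} \oplus \Span\{e_0 - b\}$ where $b$ is any point of the plane, write points of $e_i^\lambda$-plane in these coordinates, and check that the map $(\lambda, \mu_1, \mu_2) \mapsto$ (point of $\RH(\{e_1^\lambda,e_2^\lambda,e_3^\lambda\})$ with barycentric-type coordinates $(\mu_1,\mu_2)$) has rank $3$ on an open set. I expect this last step — producing an explicit three-parameter family filling an open subset of a three-dimensional affine subspace — to be the only place requiring any care, and even there the transversality of the sweeping planes to the contraction direction makes it routine; the genericity hypothesis $e_0 \notin \aff\Span\{e_1,e_2,e_3\}$ is exactly what prevents the family from degenerating to dimension two.
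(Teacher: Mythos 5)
Your proposal is correct and follows essentially the same route as the paper: the sign of $\det(e_i^\lambda-e_j^\lambda)=(1-\lambda)^3\det(e_i-e_j)$ combined with Lemma~\ref{lem:T3} gives the $T_3$ property, and the inclusion follows from $e_i^\lambda\in\LH(\{e_0,e_i\})$. The paper's proof is in fact terser (it ends with ``the result follows'' and does not prove the dimension claim at all), so your explicit use of~\eqref{eq:sroch} for the inclusion and your sweeping-planes argument for the dimension count are just careful elaborations of the same argument, not a different approach.
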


\begin{figure}
\begin{center}
\input 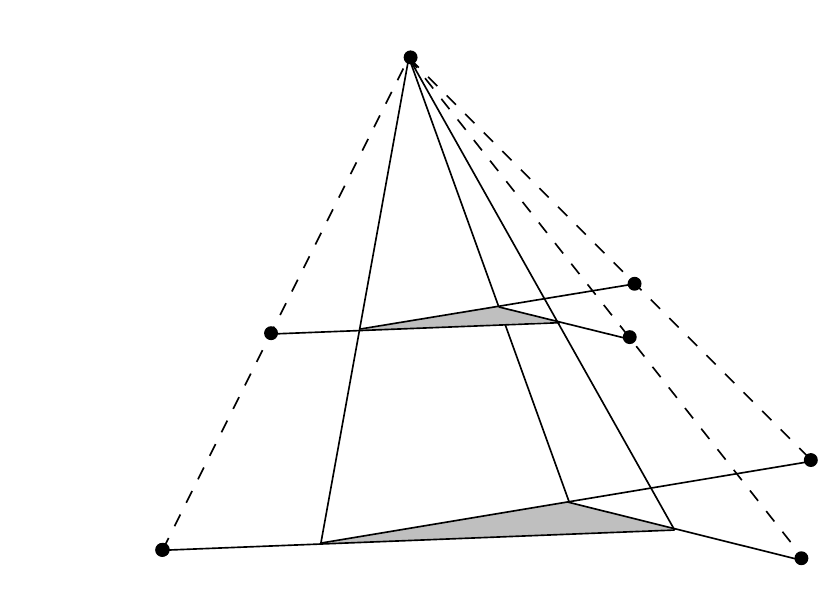_t
\caption{The three-dimensional continuum of $T_3$s described in Lemma~\ref{lem:T3-3D}.} 
\label{fig:3D-T3-continuum}
\end{center}
\end{figure}

\begin{proof}
Let $\lambda \in (0,1)$ and $i=1,2,3$.
Since $e_0 \compatible e_i$ it follows that $e_i^\lambda \in \LH(\{e_0,e_1,e_2,e_3\})$.
Since $\sign \det(e_i^\lambda - e_j^\lambda) = \sign \det (e_i - e_j)$, $i,j=1,2,3$, it follows that $e_1^\lambda, e_2^\lambda, e_3^\lambda$ form a $T_3$. The result follows.
\end{proof}

%---------------------------------------------------------------------------------------
\section{The transformation strains of Monoclinic-I martensite} 
\label{sec:strains}
 
In this and the next section we prepare to apply the results of the preceding sections to monoclinic-I martensite by exploring first the symmetry and the geometry of this material.

We denote the transformation strains of the twelve variants of  cubic-to-monoclinic-I martensite, listed in Table~\ref{tab:strains}, by 
\begin{align*}
e^{(i)} \in \E
 &:= \{ e^{(i)} \ |\ i \in \I \}, \\
\I
 &:= \{1,2, \dots,12\} 
\end{align*} 
or simply by $i \in \I$ if the meaning is clear from the context. The transformation strains involve four lattice parameters $\alpha$, $\beta$, $\eps$ and $\delta$. These have been chosen such that $\eps > 0$ and $\delta > 0$. Typical lattice parameters are listed in Table~\ref{tab:parameters}. Note that $\Tr e =  2\alpha+\beta$ for $e \in \E$; thus $\E \subset \Sym[2\alpha+\beta]{3}$.

\begin{table}[t]
\begin{center}
\begin{tabular}{l|l||l|l||l|l||l|l}
 i & $e^{(i)}$ & i & $e^{(i)}$ & i & $e^{(i)}$ & i & $e^{(i)}$ \\ \hline \\[-2ex]
 1 & $\begin{pmatrix} \alpha & \delta & \eps \\
                     \delta & \alpha & \eps \\
                     \eps & \eps &\beta \end{pmatrix}$ 
 & 2 & $\begin{pmatrix} \alpha & \delta & -\eps \\
                     \delta & \alpha & -\eps \\
                     -\eps & -\eps &\beta \end{pmatrix}$
 & 3 & $\begin{pmatrix} \alpha & -\delta & -\eps \\
                     -\delta & \alpha & \eps \\
                     -\eps & \eps &\beta \end{pmatrix} $
 & 4 & $\begin{pmatrix} \alpha & -\delta & \eps \\
                     -\delta & \alpha & -\eps \\
                     \eps & -\eps &\beta \end{pmatrix}$ \\[4ex] \hline &&&&&& \\[-2ex]
 5 & $\begin{pmatrix} \alpha & \eps & \delta \\
                     \eps & \beta & \eps \\
                     \delta & \eps &\alpha \end{pmatrix}$ 
 & 6 & $\begin{pmatrix}\alpha & -\eps & \delta \\
                     -\eps & \beta & -\eps \\
                     \delta & -\eps &\alpha \end{pmatrix}$
 & 7 & $\begin{pmatrix} \alpha & -\eps & -\delta \\
                     -\eps & \beta & \eps \\
                     -\delta & \eps &\alpha \end{pmatrix}$ 
 & 8 & $\begin{pmatrix}\alpha & \eps & -\delta \\
                     \eps & \beta & -\eps \\
                     -\delta & -\eps &\alpha \end{pmatrix}$ \\[4ex] \hline &&&&&& \\[-2ex]
 9 & $\begin{pmatrix} \beta & \eps & \eps \\
                     \eps & \alpha & \delta \\
                     \eps & \delta &\alpha \end{pmatrix}$ 
 & 10 & $\begin{pmatrix} \beta & -\eps & -\eps \\
                     -\eps & \alpha & \delta \\
                     -\eps & \delta &\alpha \end{pmatrix}$
 & 11 & $\begin{pmatrix} \beta & -\eps & \eps \\
                     -\eps & \alpha & -\delta \\
                     \eps & -\delta &\alpha \end{pmatrix} $
 & 12 & $\begin{pmatrix} \beta & \eps & -\eps \\
                     \eps & \alpha & -\delta \\
                     -\eps & -\delta &\alpha \end{pmatrix}$ %\\[4ex]\hline
\end{tabular}
\caption{The transformation strains of the twelve variants of
  monoclinic-I martensite, cf.\ eg.\ \cite[Table 1, p.~119]{Bhattacharya:1997p99}.}
\label{tab:strains}
\end{center}
\end{table}

\begin{table}[t]
\begin{center}
\begin{tabular}{l|c|c|c|c|l}
 & $\alpha$ & $\beta$ & $\delta$ & $\eps$ & Reference\\\hline
 NiTi & 0.0243 & -0.0437 & 0.0580 & 0.0427 & \cite{Otsuka:1971,Knowles:1981,Hane:1999-07} \\ %0.05803, 0.04266
 CuZr & 0.0348 & 0.0229 & 0.1067 & 0.0929 & \cite{Seo:1998-02a,Seo:1998-02b} \\
 TiNiCu & 0.0232 & -0.0410 & 0.0532 & 0.0395 & \cite{Nam:1990}
\end{tabular}
\caption{Typical lattice parameters for monoclinic-I martensite, cf.\ eg.\ \cite[Table 2, p.~5459]{Shu:1998p5457} and \cite[p.~55 and 184]{Bhattacharya:2003}.}
\label{tab:parameters}
\end{center}
\end{table}

\paragraph{Compatibility.}
Compatibility and incompatibility between the transformation strains is of critical importance to us. A simple calculation~\cite[Compatibility.nb]{Chenchiah-Schloemerkemper1ESM} shows that for $e,f \in \E$, 
\begin{equation} \label{eq:determinant}
\det(e-f)
 \in \left\{ 0, \pm 4 \eps \left( (\alpha - \beta) \delta + \eps^2 - \delta^2 \right) \right\},
\end{equation}
which gives the compatibility/incompatibility of the strains by Lemma~\ref{lem:SymTr3-compatibility}. 

Note that if the material parameters happen to be such that $(\alpha-\beta) \delta + \eps^2 - \delta^2 = 0$ then all strains in $\E$ are pairwise compatible. Then by Lemma~\ref{lem:pairwise-compatible}, $\LH(\E) = \CH(\E)$. (In this case the material is able to form many more twins than usual~\cite{Pitteri:1998}.)

Pairs of compatible and incompatible transformation strains in $\E$ are listed in Table~\ref{tab:compatibility}. Here and henceforth (including in Mathematica calculations) we assume that $(\alpha-\beta) \delta + \eps^2 - \delta^2 \neq 0$ and, more generally, that the lattice parameters are generic. We also assume that $\alpha \neq \beta$, the (mathematical) reason for this will become clear in Section~\ref{sec:polytope}. There we will also see that the case $\eps = \delta$ is special so we shall specifically consider this possibility. 

\begin{table}[ht]
\begin{center}
\begin{tabular}{r|l|l|l}
 i & $\det(e^{(\cdot)} - e^{(i)}) = 0$ &  $\det(e^{(\cdot)} - e^{(i)}) \gtrless 0$ &  $\det(e^{(\cdot)} - e^{(i)}) \lessgtr 0$ \\ \hline
%  i & Strains compatible with $e^{(i)}$: & Strains for which & Strains for which \\
%    & $\det(\cdot - e^{(i)}) = 0$ &  $\det(\cdot - e^{(i)}) > 0$ &  $\det(\cdot - e^{(i)}) < 0$ \\ \hline
 1 & 2, 3, 4, 5, 7, 9, 11 & 8, 12 & 6, 10 \\
 2 & 1, 3, 4, 6, 8, 10, 12 & 5, 9 & 7, 11 \\
 3 & 1, 2, 4, 5, 7, 10, 12 & 6, 11 & 8, 9 \\
 4 & 1, 2, 3, 6, 8, 9, 11 & 7, 10 & 5, 12 \\
 5 & 1, 3, 6, 7, 8, 9, 12 & 4, 11 & 2, 10 \\
 6 & 2, 4, 5, 7, 8, 10, 11 & 1, 9 & 3, 12 \\
 7 & 1, 3, 5, 6, 8, 10, 11 & 2, 12 & 4, 9 \\
 8 & 2, 4, 5, 6, 7, 9, 12 & 3, 10 & 1, 11 \\
 9 & 1, 4, 5, 8, 10, 11, 12 & 3, 7 & 2, 6 \\
10 & 2, 3, 6, 7, 9, 11, 12 & 1, 5 & 4, 8 \\
11 & 1, 4, 6, 7, 9, 10, 12 & 2, 8 & 3, 5 \\
12 & 2, 3, 5, 8, 9, 10, 11 & 4, 6 & 1, 7
\end{tabular}
\caption{Compatible and incompatible transformation strains~\cite[Compatibility.nb]{Chenchiah-Schloemerkemper1ESM}. The signs in the second and third columns depend on the material parameters; the sign in the third column is opposite to the one in the second.}
\label{tab:compatibility}
\end{center}
\end{table}

\paragraph{Distances.} 
Also of importance is the distance between the transformation strains~\cite[Distances.nb]{Chenchiah-Schloemerkemper1ESM}:

\begin{obs}
\label{obs:distance}%[\cite[Distances.nb]{Chenchiah-Schloemerkemper1ESM}]
Every pair of incompatible transformation strains is equidistant: For $e,f \in \E$ with $e \incompatible f$,
\begin{subequations} \label{eq:distance}
\begin{equation} \label{eq:distance-incompatible}
\| e - f \|^2
 = 2 (\alpha - \beta)^2 + 4 \delta^2 + 12 \epsilon^2.
\end{equation}
Remark~\ref{rem:incompatible-distances} sheds more light on this.

However for pairs of compatible transformation strains the situation is more complex: For $e,f \in \E$ with $e \compatible f$ and $e \neq f$,
\begin{equation} \label{eq:distance-compatible-1}
\| e - f \|^2
 \in  \left\{ 16 \eps^2, 8 (\delta^2 + \eps^2), 2 (\alpha - \beta)^2 + 4 (\delta - \eps)^2, 2 (\alpha - \beta)^2 + 4 (\delta + \eps)^2 \right\}.
\end{equation}
\end{subequations}
Table~\ref{tab:distances} presents the full picture. (See also Remark~\ref{rem:compatible-distances}.)%Observation~\ref{obs:inversion-distance}
\end{obs}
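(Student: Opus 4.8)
The plan is to reduce \eqref{eq:distance} to a short finite computation by exploiting the symmetry of the variant set $\E$. The first ingredient is purely formal: for $A\in\Sym{3}$ one has $\|A\|^2=\langle A,A\rangle=\Tr(A^2)=\sum_i A_{ii}^2+2\sum_{i<j}A_{ij}^2$, and for $e,f\in\E$ every entry of $e-f$ is an integer combination of $\alpha-\beta$, $\delta$ and $\eps$ --- the diagonal of $e-f$ is a signed permutation of $(0,0,\pm(\alpha-\beta))$, and each off-diagonal entry lies in $\{0,\pm 2\eps,\pm(\delta-\eps),\pm(\delta+\eps)\}$ (inspect Table~\ref{tab:strains}). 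Hence $\|e-f\|^2$ is a polynomial in $\alpha-\beta$, $\delta$ and $\eps$ alone, which already explains the shape of the right-hand sides in \eqref{eq:distance}.

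The second ingredient is symmetry. The group of signed permutation matrices acts on $\Sym{3}$ by $e\mapsto Q^T e Q$ (with $Q$ and $-Q$ inducing the same map); this action preserves $\langle\cdot,\cdot\rangle$ and $\det(\cdot)$, permutes $\E$, and --- the one fact that needs checking --- acts transitively on $\E$. Transitivity already follows from the order-$12$ subgroup generated by the cyclic permutation of the coordinate axes (which carries the block $\{e^{(1)},\dots,e^{(4)}\}$ onto $\{e^{(5)},\dots,e^{(8)}\}$ onto $\{e^{(9)},\dots,e^{(12)}\}$; e.g.\ the cyclic permutation matrix conjugates $e^{(1)}$ to $e^{(5)}$) together with the $\pi$-rotations about the coordinate axes (the Klein four-group $\{\operatorname{diag}(1,1,1),\operatorname{diag}(-1,-1,1),\operatorname{diag}(1,-1,-1),\operatorname{diag}(-1,1,-1)\}$, which acts simply transitively within each block; e.g.\ $\operatorname{diag}(-1,-1,1)$ conjugates $e^{(1)}$ to $e^{(2)}$, $\operatorname{diag}(1,-1,-1)$ to $e^{(3)}$, $\operatorname{diag}(-1,1,-1)$ to $e^{(4)}$).

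It follows that any pair $\{e,f\}$ of distinct strains in $\E$ is carried by a group element to a pair $\{e^{(1)},e^{(k)}\}$ with $k\in\{2,\dots,12\}$, and, because $\det$ is invariant, Lemma~\ref{lem:SymTr3-compatibility} shows $e\compatible f$ if and only if $e^{(1)}\compatible e^{(k)}$; by Table~\ref{tab:compatibility} the latter happens exactly for $k\in\{2,3,4,5,7,9,11\}$. It therefore only remains to evaluate $\|e^{(1)}-e^{(k)}\|^2$ for $k=2,\dots,12$ using the formula of the first paragraph --- eleven explicit $3\times 3$ matrices, each with at most six nonzero entries. The routine outcome is: the four incompatible differences ($k\in\{6,8,10,12\}$) all have squared norm $2(\alpha-\beta)^2+4\delta^2+12\eps^2$, which proves \eqref{eq:distance-incompatible}; the compatible ones give $16\eps^2$ for $k=2$, $8(\delta^2+\eps^2)$ for $k\in\{3,4\}$, $2(\alpha-\beta)^2+4(\delta-\eps)^2$ for $k\in\{5,9\}$, and $2(\alpha-\beta)^2+4(\delta+\eps)^2$ for $k\in\{7,11\}$, which proves \eqref{eq:distance-compatible-1}.

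The main (and essentially only) obstacle is verifying transitivity of the symmetry action; given the block structure of Table~\ref{tab:strains} this is a short check, and everything after it is direct inspection. This symmetry viewpoint is also what underlies Remark~\ref{rem:incompatible-distances}: pairs of incompatible variants are equidistant because $\|\cdot\|$ is a symmetry invariant and the symmetry group carries every such pair into $\{e^{(1)},e^{(k)}\}$ with $k\in\{6,8,10,12\}$.
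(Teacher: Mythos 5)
Your proof is correct, and it is worth noting how it relates to the paper's treatment. The paper establishes Observation~\ref{obs:distance} purely by a symbolic computer-algebra check of all $\binom{12}{2}=66$ differences (citing the supplementary notebook \texttt{Distances.nb}); the cube symmetry only appears afterwards, in Section~\ref{sec:strains}, where Remark~\ref{rem:incompatible-distances} is offered as an a posteriori \emph{explanation} of why the incompatible pairs are equidistant. You invert this: you first exhibit an explicit group of orthogonal conjugations (axis permutations and $\pi$-rotations, i.e.\ a subgroup of the paper's $S_4$ generated by the $r_i$ of~\eqref{eq:coordinate-rotations}) acting transitively on $\E$ while preserving $\|\cdot\|$ and $\det$, and thereby reduce the claim to the eleven differences $e^{(1)}-e^{(k)}$. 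This makes the observation hand-checkable without a CAS, and there is no circularity since the isometry and determinant-invariance of $e\mapsto Q^TeQ$ are immediate. I checked your eleven squared norms and the compatibility split $k\in\{2,3,4,5,7,9,11\}$ versus $k\in\{6,8,10,12\}$ against Tables~\ref{tab:compatibility} and~\ref{tab:distances}; they all agree, so both \eqref{eq:distance-incompatible} and \eqref{eq:distance-compatible-1} follow. Two cosmetic slips in your motivational first paragraph: the off-diagonal entries of $e-f$ can also equal $\pm 2\delta$ (e.g.\ the $(1,2)$-entry of $e^{(1)}-e^{(3)}$), and for two strains in the same diagonal block the diagonal of $e-f$ vanishes entirely rather than being a signed permutation of $(0,0,\alpha-\beta)$; neither affects the actual computation in your third paragraph, which works directly with the explicit matrices.
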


We exclude (until Section~\ref{sec:conclusions}) the special case $\eps = \delta$ for which, for $e,f \in \E$ with $e \neq f$,
\begin{equation*}
\| e - f \|^2
 \in  \left\{ 16 \eps^2, 2 (\alpha - \beta)^2, 2 (\alpha - \beta)^2 + 16 \eps^2 \right\}.
\end{equation*}
Note that in this case, we cannot anymore distinguish between compatible and incompatible strains on the basis of the distance between them.

\begin{table}[ht]
\begin{center}
\begin{tabular}{r|l|l|l|l}
 i & $\| e^{(\cdot)} - e^{(i)} \|^2$ & $\| e^{(\cdot)} - e^{(i)} \|^2$ & $\| e^{(\cdot)} - e^{(i)} \|^2$ & $\| e^{(\cdot)} - e^{(i)} \|^2$ \\
 & $= 16 \eps^2$ & $= 8 (\delta^2 + \eps^2)$ & $= 2 (\alpha - \beta)^2 + 4 (\delta - \eps)^2$ & $= 2 (\alpha - \beta)^2 + 4 (\delta + \eps)^2$ \\ \hline
 1 & 2 & 3, 4 & 5, 9 & 7, 11 \\
 2 & 1 & 3, 4 & 8, 12 & 6, 10 \\
 3 & 4 & 1, 2 & 7, 10 & 5, 12 \\
 4 & 3 & 1, 2 & 6, 11 & 8, 9 \\
 5 & 6 & 7, 8 & 1, 9 & 3, 12 \\
 6 & 5 & 7, 8 & 4, 11 & 2, 10 \\
 7 & 8 & 5, 6 & 3, 10 & 1, 11 \\
 8 & 7 & 5, 6 & 2, 12 & 4, 9 \\
 9 & 10 & 11, 12 & 1, 5 & 4, 8 \\
10 & 9 & 11, 12 & 3, 7 & 2, 6 \\
11 & 12 & 9, 10 & 4, 6 & 1, 7 \\
12 & 11 & 9, 10 & 2, 8 & 3, 5 
\end{tabular}
\caption{Distances between compatible transformation strains~\cite[Distances.nb]{Chenchiah-Schloemerkemper1ESM}.}
\label{tab:distances}
\end{center}
\end{table}

\paragraph{Symmetry.} 
Now that we have knowledge of the compatibilities and the distances between the transformation strains we are ready to analyse the symmetry between them:

\begin{definition}[Symmetry and symmetry group] \label{def:symmetrygroups} \par \indent
\begin{enumerate}
\item A map  $\tau \colon \E \to \E$ is a symmetry of $\E$ if it preserves distance and compatibility in $\E$. That is, $\forall e,f \in \E$,
\begin{align*}
\| e - f \|
 &= \| \tau e - \tau f \|, \\
\det (e - f)
 &= \pm \det( \tau e - \tau f ).
\end{align*}
A symmetry group of $\E$ is a group of symmetries of $\E$.
\item Let $n \in \I \setminus \{1\}$ and ${\mathcal E}_n$ be a set of subsets of $\E$, all with cardinality $n$. That is,
\begin{equation*}
 {\mathcal E}_n \subset \left\{ S \subset \E\ |\ \# S = n \right\}.
\end{equation*}
A map $\tau \colon {\mathcal E}_n \to {\mathcal E}_n$ is a symmetry of ${\mathcal E}_n$ if it preserves distance and compatibility in ${\mathcal E}_n$. That is, $\forall S \in {\mathcal E}_n$, $\forall e,f \in S$,
\begin{align*}
\| e - f \|
 &= \| \tau e - \tau f \|, \\
\det (e - f)
 &= \pm \det( \tau e - \tau f ).
\end{align*}
A symmetry group of ${\mathcal E}_n$ is a group of symmetries of ${\mathcal E}_n$.
\end{enumerate}
\end{definition}

There are four sets that are of interest to us here. These are:
(i) $\E$ itself, 
(ii) $\E^2_{\compatible}$, the set of pairs of compatible transformation strains,
(iii) $\E^2_{\incompatible}$, the set of pairs of incompatible transformation strains, and
(iv) $\E^3_{\incompatible}$, the set of three-tuples of incompatible transformation strains:
\begin{align*}
\E^2_{\compatible}
 &:= \left\{ \{e,f\}\ |\ e,f \in \E,\ e \neq f,\ e \compatible f \right\}, \\
\E^2_{\incompatible}
 &:= \left\{ \{e,f\}\ |\ e,f \in \E,\ e \incompatible f \right\}, \\
\E^3_{\incompatible}
 &:= \left\{ \{e,f,g\}\ |\ e,f,g \in \E,\ e \incompatible f \incompatible g \incompatible e \right\}. 
\end{align*}
We characterise the symmetry of $\E$ and $\E^2_{\compatible}$ in Lemma~\ref{lem:symmetry1} and those of $\E^2_{\incompatible}$ and $\E^3_{\incompatible}$ in Lemma~\ref{lem:symmetry2} below. In order to do so we begin with some observations. For further investigation of these sets see~\cite{Chenchiah-Schloemerkemper-JMPS}. 

\begin{obs}[$S_4$ is a symmetry group of $\E$] \label{obs:oriented_cube}
Since the transformation strains in $\E$ are obtained through a phase transformation from a cubic crystal, $S_4$, the group of rotational symmetries of a cube, is a symmetry group of $\E$.
\end{obs}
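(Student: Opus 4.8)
The plan is to unpack the physical statement into a verification that each element of the point group of the cube acts on the space $\Sym{3}$ of symmetric $3\times3$ matrices in a way that permutes $\E$ and preserves the two structural features that define a symmetry in the sense of Definition~\ref{def:symmetrygroups}, namely the norm $\|\cdot\|$ and the determinant of differences (up to sign). The natural action to use is $R \cdot e := R^T e R$ for $R \in S_4 \subset SO(3)$; this is the way strains transform under a change of (cubic) reference frame. Since the twelve transformation strains of monoclinic-I martensite arise from the twelve variants permuted by the symmetry of the parent cubic lattice, it is immediate from the crystallographic construction (cf.\ \cite{Bhattacharya:2003}) that $R^T \E R = \E$ for each $R \in S_4$; concretely, one checks against Table~\ref{tab:strains} that conjugation by a cube rotation sends each $e^{(i)}$ to another $e^{(j)}$, e.g.\ the coordinate-axis relabellings and sign changes visible in the blocks $\{1,\dots,4\}$, $\{5,\dots,8\}$, $\{9,\dots,12\}$ of the table correspond exactly to the rotations cyclically permuting the crystallographic axes.

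The key steps, in order, are: (i) fix the action $e \mapsto R^T e R$ and note it maps $\Sym{3}$ to itself, and $\Sym[c]{3}$ to itself since $\Tr(R^T e R) = \Tr(e)$; (ii) verify $R^T \E R = \E$ for all $R \in S_4$, using the crystallographic origin of $\E$ (and, if a purely computational check is desired, the Mathematica verification in~\cite{Chenchiah-Schloemerkemper1ESM}); (iii) observe that for any $R \in SO(3)$ and any $e, f \in \Sym{3}$ one has $\|R^T e R - R^T f R\| = \|R^T (e-f) R\| = \|e - f\|$ because conjugation by an orthogonal matrix is an isometry of $(\Sym{3}, \langle\cdot,\cdot\rangle)$; and (iv) observe that $\det(R^T e R - R^T f R) = \det(R^T(e-f)R) = (\det R)^2 \det(e-f) = \det(e-f)$, so in particular it agrees with $\det(e-f)$ up to sign. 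Since the map $\tau_R \colon \E \to \E$, $\tau_R(e) = R^T e R$, is a bijection (its inverse is $\tau_{R^{-1}}$) satisfying both displayed conditions in Definition~\ref{def:symmetrygroups}, each $\tau_R$ is a symmetry of $\E$, and $R \mapsto \tau_R$ is a group homomorphism from $S_4$ into the symmetry group of $\E$; hence (the image of) $S_4$ is a symmetry group of $\E$.

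The only genuine content is step~(ii), the closure $R^T \E R = \E$; steps (iii) and (iv) are the elementary facts that conjugation by $SO(3)$ preserves the Frobenius inner product and the determinant, and so they come essentially for free. The mild subtlety in (ii) is that one should check the whole group generated by the basic rotations, not just generators applied once; but since $\E$ is finite and the action is by a finite group this is a finite check, and it follows at the conceptual level directly from the fact that $\E$ is (by construction) the orbit structure of the martensitic variants under the parent cubic symmetry. I expect this to be the main obstacle only in the bookkeeping sense — matching the abstract cube rotations to the explicit index permutations of Table~\ref{tab:strains} — rather than in any mathematical sense; indeed one may simply invoke the standard crystallographic fact and, for completeness, point to the symbolic computation in the electronic supplementary material.
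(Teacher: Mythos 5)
Your proposal is correct and follows essentially the same route as the paper: the paper also realises $S_4$ as conjugation $e \mapsto R_i e R_i^T$ by the coordinate-axis quarter-turns, notes in~\eqref{eq:rotations} that this preserves distance and determinant, and records the induced permutation of $\I$ in Table~\ref{tab:generators} (verified symbolically in the supplementary material). The only cosmetic difference is your use of $R^T e R$ versus the paper's $R e R^T$, which is the same action up to relabelling.
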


This group and its action on $\E$ can be generated as follows: Let $R_1$, $R_2$ and $R_3$ be anticlockwise rotations of $\frac{\pi}{2}$ about the coordinate axes:
\begin{equation*}
R_1
 = \begin{pmatrix}
      1 & 0 & 0 \\
      0 & 0 & -1 \\
      0 & 1 & 0
     \end{pmatrix}, \qquad 
R_2
 = \begin{pmatrix}
      0 & 0 & 1 \\
      0 & 1 & 0 \\
      -1 & 0 & 0
     \end{pmatrix}, \qquad
R_3
 = \begin{pmatrix}
      0 & -1 & 0 \\
      1 & 0 & 0 \\
      0 & 0 & 1
     \end{pmatrix}.
\end{equation*}
For $i=1,2,3$, let $r_i$ be the map
\begin{equation} \label{eq:coordinate-rotations}
\Sym{3} \ni e \mapsto r_i e
 := R_i e R_i^T.
 \end{equation}
It is immediate that these are distance and determinant (and thus, compatibility) preserving: $\forall i=1,2,3$, $\forall e,f \in \E$,
\begin{subequations} \label{eq:rotations}
\begin{align}
\| e - f \|
 &= \| r_i e - r_i f \|, \\
\det (e - f)
 &= \det( r_i e - r_i f ).
\end{align}
\end{subequations}
%(This can be verified from Table~\ref{tab:compatibility}.)
Then $\{r_1,r_2,r_3\}$ generates $S_4$. (In fact any two of $r_1,r_2,r_3$ generate $S_4$ but it is convenient to retain all three.) The action of $S_4$ on $\E$ is listed, eg.\ in \cite[Table 1, p. 2607]{Hane:1999-07} but their numbering of the transformation strains is different from ours.

$S_4$ is isomorphic to a group of permutations on $\I$. We denote this group too by $S_4$; the images of $r_1$, $r_2$, $r_3$ under this isomorphism are also denoted by $r_1$, $r_2$, $r_3$. Table~\ref{tab:generators} lists the action of $r_1$, $r_2$, $r_3$ on $\I$~\cite[Symmetry.nb]{Chenchiah-Schloemerkemper1ESM}.

\begin{table}[t]
\begin{center}
\begin{tabular}{c|c|c|c|c|c|c|c|c|c|c|c|c}
 $i$ & 1 & 2 & 3 & 4 & 5 & 6 & 7 & 8 & 9 & 10 & 11 & 12 \\ \hline 
 $r_1 i$ & 6 & 5 & 8 & 7 & 4 & 3 & 2 & 1 & 11 & 12 & 10 & 9 \\ \hline
 $r_2 i$ & 12 & 11 & 9 & 10 & 8 & 7 & 5 & 6 & 2 & 1 & 3 & 4 \\ \hline
 $r_3 i$ & 3 & 4 & 2 & 1 & 10 & 9 & 12 & 11 & 7 & 8 & 5 & 6
\end{tabular}
\caption{The action of $r_1$, $r_2$, $r_3$ on $\I$.}
\label{tab:generators}
\end{center}
\end{table}

\begin{remark} \label{rem:cube}
Since $S_4$ is the group of rotational symmetries of a cube, it is natural and convenient to identify $\E$ (and $\I$) with the edges of a cube as shown in Figure~\ref{fig:oriented-cube}. (We could also have identified them with the diagonals of the faces.) Then $r_1$, $r_2$, $r_3$ are anticlockwise rotations of $\frac{\pi}{2}$ along an axis perpendicular to the face of the cube and passing through its centre.
 
From Table~\ref{tab:compatibility} we note that the four edges with which an edge is incompatible are precisely the four edges with which it shares a vertex. Thus,
\begin{enumerate}
\item \label{it:corners-of-faces} The 24 elements of $\E^2_{\incompatible}$ can be identified with the 24 corners of the faces of a cube, and
\item \label{it:corners}The 8 elements of $\E^3_{\incompatible}$ can be identified with the 8 corners of a cube.
\end{enumerate}
\end{remark}

\begin{figure}[ht]
\begin{center}
\input 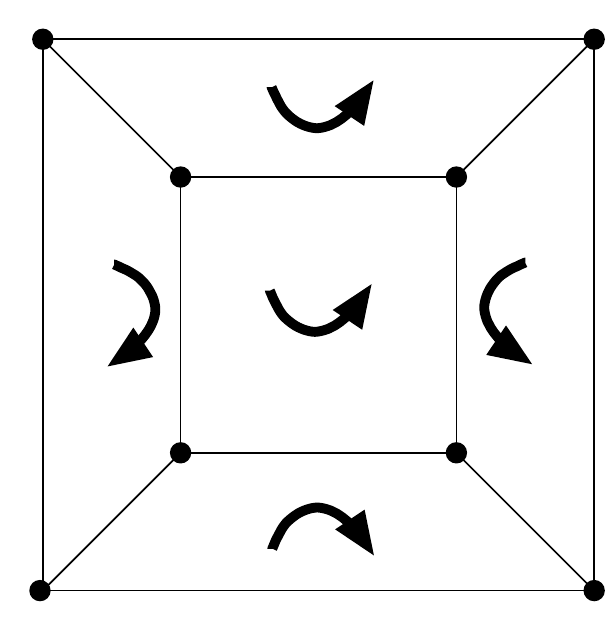_t \quad \qquad
\input 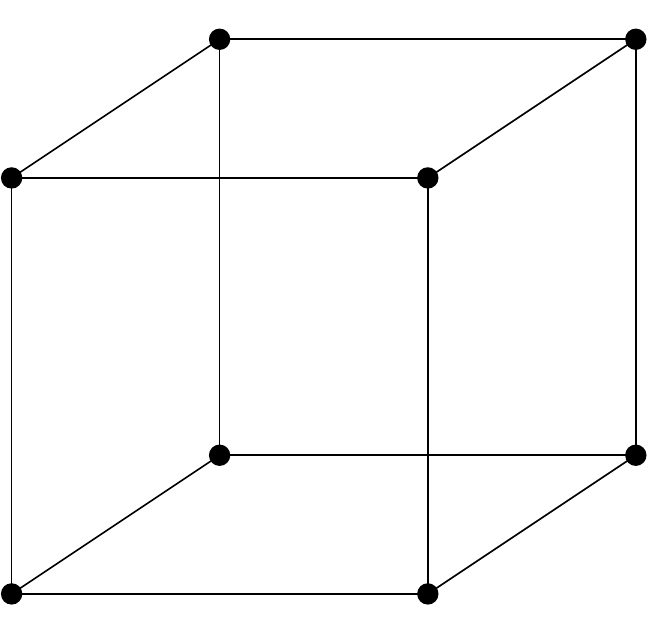_t
\caption{The oriented cube representing the symmetries of $\E$~\cite[Symmetry.nb]{Chenchiah-Schloemerkemper1ESM}.}
\label{fig:oriented-cube}
\end{center}
\end{figure}

Our next task is to show that $\E$ does not have the reflection symmetries of a cube. We will accomplish this in Observation~\ref{obs:inversion-distance} below.

\begin{definition}[Inversion] \label{def:inversion}
Let $r_0$ be the permutation on $\I$ which interchanges $1$ and $2$, $3$ and $4$, $5$ and $6$, $7$ and $8$, $9$ and $10$, and $11$ and $12$. 
\end{definition}

As can be seen from Figure~\ref{fig:oriented-cube}, $r_0$ is an inversion (reflection through the centre) of the cube. This immediately shows both that $r_0 \notin S_4$ and that $\{ r_0, r_1, r_2, r_3 \}$ generates $S_4 \times C_2$, the group of symmetries of a cube (including reflections).

\begin{remark} \label{rem:inversion}
The permutation $r_0$ corresponds to replacing $\eps$ with $-\eps$ in the transformation strains (cf.\ Table~\ref{tab:strains}).
\end{remark}

\begin{obs}
Unlike $r_1$, $r_2$ and $r_3$, $r_0$ cannot be identified with a linear operator on $\Sym[2\alpha+\beta]{3}$.
\end{obs}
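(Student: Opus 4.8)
The plan is to argue by contradiction, after first fixing the meaning of the statement: since $\Sym[2\alpha+\beta]{3}$ is an affine (not a linear) subspace of $\Sym{3}$, saying that $r_0$ ``can be identified with a linear operator'' on it must mean that $r_0$ extends to an affine bijection $L$ of $\Sym[2\alpha+\beta]{3}$, i.e.\ $L(e^{(i)}) = e^{(r_0 i)}$ for all $i \in \I$; this includes, as for $r_1,r_2,r_3$, the case where the map is the restriction of a linear operator on $\Sym{3}$. So suppose such an $L$ exists. By Definition~\ref{def:inversion}, $r_0$ is the fixed-point-free involution interchanging $1 \leftrightarrow 2$, $3 \leftrightarrow 4$, \dots, $11 \leftrightarrow 12$, and since an affine map commutes with taking midpoints, $L$ must fix the midpoint of each of these six pairs; e.g.\ $L\bigl(\tfrac12(e^{(1)}+e^{(2)})\bigr) = \tfrac12 e^{(2)} + \tfrac12 e^{(1)}$.

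Next I would read these six midpoints off Table~\ref{tab:strains}: they are exactly the ``$\eps = 0$'' strains, namely $\operatorname{diag}(\alpha,\alpha,\beta) \pm \delta F_{12}$, $\operatorname{diag}(\alpha,\beta,\alpha) \pm \delta F_{13}$ and $\operatorname{diag}(\beta,\alpha,\alpha) \pm \delta F_{23}$, where $F_{jk}$ denotes the symmetric matrix with $1$ in positions $(j,k)$ and $(k,j)$ and $0$ elsewhere. The crucial step is to check that, because $\alpha \neq \beta$ and $\delta \neq 0$, these six points affinely span $\Sym[2\alpha+\beta]{3}$: the two midpoints within a pair differ by $2\delta F_{jk}$, which supplies the directions $F_{12},F_{13},F_{23}$, and differences of midpoints from different pairs supply the two further directions $(\alpha-\beta)\operatorname{diag}(1,0,-1)$ and $(\alpha-\beta)\operatorname{diag}(0,1,-1)$; these five directions span the five-dimensional space $\Sym[0]{3}$ of trace-free symmetric matrices, which is the translation space of $\Sym[2\alpha+\beta]{3}$.

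An affine self-map of $\Sym[2\alpha+\beta]{3}$ fixing an affinely spanning set is the identity, so $L = \mathrm{id}$; but then $L(e^{(1)}) = e^{(1)} \neq e^{(2)} = e^{(r_0 1)}$ because $\eps \neq 0$ --- a contradiction. The only genuine computation is the affine-spanning claim of the second paragraph, which is a short explicit linear-algebra verification, so I do not anticipate a real obstacle; morally, the observation just records that negating $\eps$ (Remark~\ref{rem:inversion}) relocates $\eps$ among different matrix entries for different variants and so cannot be carried out by one single (affine-)linear map, whereas the cubic symmetries $r_1,r_2,r_3$ act uniformly as $e \mapsto R_i e R_i^{T}$.
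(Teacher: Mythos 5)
Your proof is correct. The paper also argues by contradiction from combinations of the strains that $r_0$ preserves, but via a different set of combinations and a different intermediate conclusion: it uses the preserved four-sums $e^{(1)}+e^{(2)}+e^{(3)}+e^{(4)}$, etc., to conclude that $\L$ acts as the identity on the diagonal components, and the preserved two-sums $e^{(1)}+e^{(3)} \mapsto e^{(2)}+e^{(4)}$, etc., to conclude that it acts as minus the identity on the off-diagonal components; these two facts together contradict $\L e^{(1)} = e^{(2)}$, since the resulting image would have $(1,2)$-entry $-\delta$ rather than $+\delta$. Your route --- fixing the six midpoints of the swapped pairs, checking that they affinely span $\Sym[2\alpha+\beta]{3}$ (which is where $\delta \neq 0$ and $\alpha \neq \beta$ enter), and concluding $L = \mathrm{id}$, incompatible with $\eps \neq 0$ --- buys two things. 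First, it cleanly resolves the interpretive point that $\Sym[2\alpha+\beta]{3}$ is only an affine subspace: the paper's manipulations of sums such as $e^{(1)}+\cdots+e^{(4)}$, whose trace is $4(2\alpha+\beta)$, tacitly extend $\L$ to a linear operator on the ambient space, whereas your argument stays inside the affine subspace throughout. Second, it excludes the strictly larger class of affine bijections, not merely restrictions of linear maps. The affine-spanning verification you identify as the only substantive computation does go through exactly as you describe, since $F_{12}, F_{13}, F_{23}, (\alpha-\beta)\operatorname{diag}(0,1,-1), (\alpha-\beta)\operatorname{diag}(1,0,-1)$ are five independent trace-free directions.
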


\begin{proof}
Assume, on the contrary, that there exists $\L \colon \Sym[2\alpha+\beta]{3} \to \Sym[2\alpha+\beta]{3}$, linear, such that $\forall i \in \I$, $\mathcal{L} e^{(i)} = e^{(r_0 i)}$. Then,
\begin{alignat*}{2}
\L \left( e^{(1)} + e^{(2)} + e^{(3)} + e^{(4)} \right)
 &= e^{(1)} + e^{(2)} + e^{(3)} + e^{(4)} \\
  &\implies \L \begin{pmatrix} \alpha & 0 & 0 \\ 0 & \alpha & 0 \\ 0 & 0 & \beta \end{pmatrix}
   =  \begin{pmatrix} \alpha & 0 & 0 \\ 0 & \alpha & 0 \\ 0 & 0 & \beta \end{pmatrix}, \\
\L \left( e^{(5)} + e^{(6)} + e^{(7)} + e^{(8)} \right)
 &= e^{(5)} + e^{(6)} + e^{(7)} + e^{(8)} \\
  &\implies \L \begin{pmatrix} \alpha & 0 & 0 \\ 0 & \beta & 0 \\ 0 & 0 & \alpha \end{pmatrix}
   =  \begin{pmatrix} \alpha & 0 & 0 \\ 0 & \beta & 0 \\ 0 & 0 & \alpha \end{pmatrix}, \\
\L \left( e^{(9)} + e^{(10)} + e^{(11)} + e^{(12)} \right)
 &= e^{(9)} + e^{(10)} + e^{(11)} + e^{(12)} \\
  &\implies \L \begin{pmatrix} \beta & 0 & 0 \\ 0 & \alpha & 0 \\ 0 & 0 & \alpha \end{pmatrix}
   =  \begin{pmatrix} \beta & 0 & 0 \\ 0 & \alpha & 0 \\ 0 & 0 & \alpha \end{pmatrix}.
\end{alignat*}
Thus (i) $\L$ is an identity on the diagonal components of $\Sym[2\alpha+\beta]{3}$. Moreover
\begin{alignat*}{4}
\L \left( e^{(1)} + e^{(3)} \right)
 &= e^{(2)} + e^{(4)}
 &\implies \L \begin{pmatrix} \alpha & 0 & 0 \\ 0 & \alpha & \eps \\ 0 & \eps & \beta \end{pmatrix}
 &=  \begin{pmatrix} \alpha & 0 & 0 \\ 0 & \alpha & -\eps \\ 0 & -\eps & \beta \end{pmatrix}, \\
\L \left( e^{(5)} + e^{(8)} \right)
 &= e^{(6)} + e^{(7)}
 &\implies \L \begin{pmatrix} \alpha & \eps & 0 \\ \eps & \beta & 0 \\ 0 & 0 & \alpha \end{pmatrix}
 &=  \begin{pmatrix} \alpha & -\eps & 0 \\ -\eps & \beta & 0 \\ 0 & 0 & \alpha \end{pmatrix}, \\
\L \left( e^{(9)} + e^{(11)} \right)
 &= e^{(10)} + e^{(12)}
 &\implies \L \begin{pmatrix} \beta & 0 & \eps \\ 0 & \alpha & 0 \\ \eps & 0 & \alpha \end{pmatrix}
 &=  \begin{pmatrix} \beta & 0 & -\eps \\ 0 & \alpha & 0 \\ -\eps & 0 & \alpha \end{pmatrix}.
\end{alignat*}
Thus (ii) $\L$ is a negative of the identity on the off-diagonal components of $\Sym[2\alpha+\beta]{3}$.

It is easy to check that (i) and (ii) are contradictory, eg. $\L e^{(1)} = e^{(2)}$. 
\end{proof}

Now we address the question of whether $r_0$ is a symmetry of $\E$:

\begin{obs}[Inversions preserve compatibility] \label{obs:inversion-compatibility}
As can be verified~\cite[Symmetry.nb]{Chenchiah-Schloemerkemper1ESM} from Table~\ref{tab:compatibility} and~\eqref{eq:determinant}, $r_0$ is compatibility preserving: $\forall e,f \in \E$,
\begin{equation*} %\label{eq:inversion-compatibility}
\det (e - f)
 = - \det( r_0 e - r_0 f ).
\end{equation*}
\end{obs}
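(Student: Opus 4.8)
The plan is to establish that $r_0$ is compatibility preserving, i.e.\ $\det(e-f) = -\det(r_0 e - r_0 f)$ for all $e,f \in \E$, by combining Remark~\ref{rem:inversion} with the structure of the determinant formula~\eqref{eq:determinant}. The key observation is that $r_0$ acts on $\E$ exactly by sending $\eps \mapsto -\eps$ in the listed transformation strains; therefore, if $e = e^{(i)}$ and $f = e^{(j)}$, then $r_0 e$ and $r_0 f$ are the strains one obtains from $e$ and $f$ by flipping the sign of $\eps$ everywhere. Write $e = e(\eps)$, $f = f(\eps)$ to make this dependence explicit; then $r_0 e - r_0 f = (e-f)(-\eps)$, the matrix $e - f$ with $\eps$ replaced by $-\eps$.

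Next I would simply track how $\det$ transforms under $\eps \mapsto -\eps$. The cleanest route uses~\eqref{eq:determinant}: the author already records that $\det(e-f) \in \{0, \pm 4\eps((\alpha-\beta)\delta + \eps^2 - \delta^2)\}$. The quantity $4\eps((\alpha-\beta)\delta + \eps^2 - \delta^2)$ is an odd function of $\eps$ (the prefactor $4\eps$ is odd, the bracket $(\alpha-\beta)\delta + \eps^2 - \delta^2$ is even), so replacing $\eps$ by $-\eps$ negates the nonzero value and fixes the zero value. Hence $\det((e-f)(-\eps)) = -\det((e-f)(\eps))$ in all cases, which is exactly $\det(r_0 e - r_0 f) = -\det(e - f)$.

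Alternatively — and this is the route the excerpt seems to endorse (``As can be verified from Table~\ref{tab:compatibility} and~\eqref{eq:determinant}'') — one verifies the claim by direct inspection: $r_0$ preserves the set of compatible pairs (since $r_0$ is read off Table~\ref{tab:compatibility} / Table~\ref{tab:generators} and one checks it maps the ``$\det = 0$'' column of each row $i$ to the ``$\det = 0$'' column of row $r_0 i$), and it interchanges the two sign-classes in the last two columns of Table~\ref{tab:compatibility} (the ``$\gtrless 0$'' and ``$\lessgtr 0$'' columns), which is precisely the statement that $\det(r_0 e - r_0 f)$ and $\det(e-f)$ have opposite signs whenever they are nonzero. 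Since by~\eqref{eq:determinant} all nonzero values of $\det(e-f)$ have the same absolute value, matching signs forces $\det(r_0 e - r_0 f) = -\det(e - f)$ exactly, not just up to sign.

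I expect essentially no obstacle here: the only mild subtlety is making rigorous the passage from ``$r_0$ corresponds to $\eps \mapsto -\eps$'' (Remark~\ref{rem:inversion}) to the matrix identity $r_0 e - r_0 f = (e-f)|_{\eps \to -\eps}$, but this is immediate from linearity of the substitution $\eps \mapsto -\eps$ on entries. After that the argument is the one-line parity check on the odd function $4\eps((\alpha-\beta)\delta + \eps^2 - \delta^2)$, or equivalently the bookkeeping against Table~\ref{tab:compatibility} that the author delegates to the accompanying Mathematica notebook~\cite{Chenchiah-Schloemerkemper1ESM}.
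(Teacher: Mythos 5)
Your proposal is correct, and your primary argument is actually more self-contained than what the paper offers: the paper gives no argument at all for Observation~\ref{obs:inversion-compatibility} beyond delegating the check to Table~\ref{tab:compatibility}, \eqref{eq:determinant} and a Mathematica notebook, which is exactly your second (``bookkeeping'') route. Your first route --- combining Remark~\ref{rem:inversion} ($r_0$ acts by $\eps\mapsto-\eps$ entrywise, hence $r_0e-r_0f=(e-f)|_{\eps\to-\eps}$) with the observation that every expression in the list \eqref{eq:determinant} is odd in $\eps$ --- is a genuinely cleaner, conceptual derivation that explains \emph{why} the sign flips rather than merely recording that it does, and it parallels how the paper itself uses Remark~\ref{rem:inversion} to deduce Observation~\ref{obs:inversion-distance}. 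The one point worth making explicit is that your parity step needs \eqref{eq:determinant} to be read as a polynomial identity for each fixed pair $(e^{(i)},e^{(j)})$, i.e.\ that $\det(e^{(i)}-e^{(j)})$ is \emph{identically} one of $0$, $+4\eps\bigl((\alpha-\beta)\delta+\eps^2-\delta^2\bigr)$, $-4\eps\bigl((\alpha-\beta)\delta+\eps^2-\delta^2\bigr)$ as a polynomial in the lattice parameters, not merely that its value lies in that three-element set for each choice of parameters; otherwise the branch could in principle switch when $\eps$ is negated. This is the intended reading of the symbolic computation (and follows in any case since a product of polynomials vanishing identically forces one factor to vanish identically), so it is a remark to add rather than a gap to fill.
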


\begin{obs}[Effect of inversions on distance] \label{obs:inversion-distance}
From Observation~\ref{obs:distance} and Remark~\ref{rem:inversion} it follows that $r_0$ is distance-preserving on pairs of incompatible transformation strains. However it is distance-preserving only on some pairs of compatible transformation strains: For $e,f \in \E$,
\begin{equation} \label{eq:inversion-distance}
\| e - f \|^2 = \| r_0 e - r_0 f \|^2
 \qquad \iff \qquad
  \| e - f \|^2 \neq 2 (\alpha - \beta)^2 + 4 (\delta \pm \eps)^2.
\end{equation}
This is easy to verify in view of the following: Let $e,f \in \E$ with $e \compatible f$. Then,
\begin{equation} \label{eq:distance-compatible-2}
\| e - f \|^2 =
 \begin{cases}
  16 \eps^2 &\text{ if } e = r_0 f, \\ 
  8 \delta^2 + 8 \eps^2 &\text{ if } \exists i \in \{1,2,3\},\ e = r_i f, \\
  2 (\alpha - \beta)^2 + 4 (\delta \pm \eps)^2 &\text{ else}.
 \end{cases}
\end{equation}
(The middle case corresponds to parallel edges of the cube that share a face.)
\end{obs}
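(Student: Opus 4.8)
The plan is to reduce the entire statement to one algebraic fact, namely Remark~\ref{rem:inversion}: since $e^{(r_0 i)}$ is obtained from $e^{(i)}$ by the substitution $\eps \mapsto -\eps$, the matrix $e^{(r_0 i)} - e^{(r_0 j)}$, and hence the scalar $\| e^{(r_0 i)} - e^{(r_0 j)} \|^2$ viewed as a polynomial in $\alpha,\beta,\delta,\eps$, is obtained from $\| e^{(i)} - e^{(j)} \|^2$ by the same substitution. Consequently $r_0$ preserves the squared distance of a pair of transformation strains if and only if the polynomial expressing that squared distance is even in $\eps$. The proof then amounts to reading off, from the list of possible squared distances, which of them are even in $\eps$.

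First I would dispose of the easy cases. If $e = f$, the left-hand side of~\eqref{eq:inversion-distance} is $0$, and $0 \neq 2(\alpha-\beta)^2 + 4(\delta\pm\eps)^2$ by genericity, so both sides hold. If $e \incompatible f$, then by~\eqref{eq:distance-incompatible} we have $\| e-f \|^2 = 2(\alpha-\beta)^2 + 4\delta^2 + 12\eps^2$, which is even in $\eps$, so $r_0$ preserves it; and $4\delta^2 + 12\eps^2 = 4(\delta\pm\eps)^2$ would force $\eps = \pm\delta$, which is impossible since $\eps,\delta>0$ and we exclude $\eps=\delta$. Hence both sides of~\eqref{eq:inversion-distance} hold for incompatible pairs as well. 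Note that the first assertion of the Observation (that $r_0$ is distance-preserving on incompatible pairs) is already contained here.

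It remains to treat $e \compatible f$ with $e \neq f$, for which I would first establish~\eqref{eq:distance-compatible-2}. The four possible values of $\| e-f\|^2$ are already given by~\eqref{eq:distance-compatible-1}; the content of~\eqref{eq:distance-compatible-2} is the assignment of \emph{which} compatible pairs realise each value. This is a finite check against Tables~\ref{tab:compatibility}, \ref{tab:distances} and~\ref{tab:generators} (equivalently~\cite[Distances.nb, Symmetry.nb]{Chenchiah-Schloemerkemper1ESM}): in the cube picture of Remark~\ref{rem:cube} the antipodal compatible partner, $e = r_0 f$, sits at squared distance $16\eps^2$; the two compatible partners of the form $e = r_i f$ for some $i\in\{1,2,3\}$, up to swapping $e$ and $f$ (parallel edges sharing a face), sit at squared distance $8(\delta^2+\eps^2)$; and the remaining four compatible partners split evenly, two at squared distance $2(\alpha-\beta)^2 + 4(\delta-\eps)^2$ and two at $2(\alpha-\beta)^2 + 4(\delta+\eps)^2$. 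This identification, matching the $\eps\mapsto-\eps$ symmetry against the orbit structure of the $S_4$-action, is the only genuine work in the proof; everything else is $\eps$-parity bookkeeping and the genericity hypotheses on the lattice parameters (which guarantee in particular that the four compatible values and the incompatible value are pairwise distinct and nonzero).

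Finally I would combine everything. On compatible pairs at squared distance $16\eps^2$ or $8(\delta^2+\eps^2)$ the expression is even in $\eps$, so $r_0$ preserves it, and by genericity neither value equals $2(\alpha-\beta)^2 + 4(\delta\pm\eps)^2$; thus both sides of~\eqref{eq:inversion-distance} hold. On a compatible pair at squared distance $2(\alpha-\beta)^2 + 4(\delta-\eps)^2$ or $2(\alpha-\beta)^2 + 4(\delta+\eps)^2$, the substitution $\eps\mapsto-\eps$ interchanges these two values, which are distinct because $\delta\eps\neq0$; so the squared distance strictly changes (recall that $\{r_0 e, r_0 f\}$ is again a compatible pair, by Observation~\ref{obs:inversion-compatibility}), i.e. the left-hand side of~\eqref{eq:inversion-distance} fails, and the right-hand side fails as well. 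Together with the $e=f$ and $e\incompatible f$ cases this establishes~\eqref{eq:inversion-distance} for all $e,f\in\E$.
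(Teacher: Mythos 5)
Your proposal is correct and follows essentially the same route as the paper: it reduces everything to Remark~\ref{rem:inversion} (so that $\|r_0e - r_0f\|^2$ is $\|e-f\|^2$ with $\eps\mapsto-\eps$), invokes the equidistance formula~\eqref{eq:distance-incompatible} for incompatible pairs, and reads off~\eqref{eq:inversion-distance} from the finite classification~\eqref{eq:distance-compatible-2} of compatible pairs via the tables, with the genericity hypotheses ($\alpha\neq\beta$, $\eps,\delta>0$, $\eps\neq\delta$) guaranteeing that only the $4(\delta\pm\eps)^2$ values fail to be even in $\eps$. Your explicit handling of the $e=f$ and incompatible cases, and of the distinctness of the candidate values, is a slightly more careful write-up of what the paper leaves as "easy to verify".
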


Thus $r_0$ is not a symmetry of $\E$ nor is it a symmetry of $\E^2_{\compatible}$. In the light of Observation~\ref{obs:oriented_cube} we conclude that $\E$ and $\E^2_{\compatible}$ have the rotation symmetries of a cube but not its reflection symmetries:

\begin{lemma}%[Symmetries of $\E$]
\label{lem:symmetry1}
$S_4$ is a symmetry group of $\E$ and $\E^2_{\compatible}$, whereas $S_4 \times C_2$ is not a symmetry group of  $\E$ or $\E^2_{\compatible}$.
\end{lemma}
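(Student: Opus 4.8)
The plan is to combine the three preceding observations into the two halves of the statement. The first half, that $S_4$ is a symmetry group of $\E$, is essentially already done: Observation~\ref{obs:oriented_cube} asserts this, and it follows from the fact that each $r_i$, $i=1,2,3$, is realized by conjugation $e \mapsto R_i e R_i^T$ with $R_i \in SO(3)$, hence preserves both $\|e-f\|$ and $\det(e-f)$ exactly (see~\eqref{eq:rotations}), so the group $S_4$ they generate consists of symmetries of $\E$ in the sense of Definition~\ref{def:symmetrygroups}. Since every $r_i$ preserves determinants outright (not just up to sign), it also preserves the compatibility relation, and since it preserves distances it restricts to a well-defined bijection of $\E^2_{\compatible}$ that preserves distance and the determinant-up-to-sign condition; hence $S_4$ is a symmetry group of $\E^2_{\compatible}$ as well.

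For the second, negative, half I would argue by contradiction using the inversion $r_0$ of Definition~\ref{def:inversion}. The key point is that $S_4 \times C_2$ is generated by $\{r_0, r_1, r_2, r_3\}$ (as noted right after Definition~\ref{def:inversion}), and $r_0 \notin S_4$; so if $S_4 \times C_2$ were a symmetry group of $\E$, then in particular $r_0$ would be a symmetry of $\E$. But Observation~\ref{obs:inversion-distance}, together with~\eqref{eq:distance-compatible-2} and~\eqref{eq:inversion-distance}, exhibits a pair $e, f \in \E$ with $e \compatible f$ for which $\|e-f\|^2 = 2(\alpha-\beta)^2 + 4(\delta\pm\eps)^2$, and for such a pair $\|r_0 e - r_0 f\|^2 \neq \|e - f\|^2$ (concretely, as the proof of Observation~\ref{obs:inversion-distance} notes, $r_0$ moves this pair to one whose squared distance is $2(\alpha-\beta)^2+4(\delta\mp\eps)^2$, which differs since $\eps,\delta > 0$ and $\eps \neq \delta$ under our generic/standing assumptions). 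Hence $r_0$ fails the distance-preservation requirement of Definition~\ref{def:symmetrygroups}, so it is not a symmetry of $\E$, and therefore $S_4 \times C_2$ is not a symmetry group of $\E$. The identical pair witnesses that $r_0$ is not a symmetry of $\E^2_{\compatible}$ either (it lies in $\E^2_{\compatible}$), so $S_4 \times C_2$ is not a symmetry group of $\E^2_{\compatible}$.

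The only real subtlety — and the one place worth being careful — is making sure the negative conclusion is robust under the standing hypotheses: we need $2(\alpha-\beta)^2 + 4(\delta-\eps)^2 \neq 2(\alpha-\beta)^2 + 4(\delta+\eps)^2$, i.e. $(\delta-\eps)^2 \neq (\delta+\eps)^2$, i.e. $\delta\eps \neq 0$, which holds since $\eps > 0$ and $\delta > 0$ by the normalization fixed in Section~\ref{sec:strains}. (We do not even need $\eps \neq \delta$ here; that assumption is only used to separate compatible from incompatible pairs by distance, not for this argument.) So there is no genuine obstacle: the lemma is a bookkeeping consequence of Observations~\ref{obs:oriented_cube}, \ref{obs:inversion-compatibility} and~\ref{obs:inversion-distance}, and the proof is short. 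I would simply cite those three observations and the remark that $\{r_0,r_1,r_2,r_3\}$ generates $S_4 \times C_2$ while $r_0 \notin S_4$.
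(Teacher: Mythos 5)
Your proposal is correct and follows essentially the same route as the paper: the positive half is Observation~\ref{obs:oriented_cube} together with~\eqref{eq:rotations}, and the negative half uses $r_0$ as the witness via Observation~\ref{obs:inversion-distance} and the fact that $\{r_0,r_1,r_2,r_3\}$ generates $S_4\times C_2$ with $r_0\notin S_4$. Your closing remark that only $\delta\eps\neq 0$ (not $\eps\neq\delta$) is needed for the distance discrepancy is a correct and slightly more careful observation than the paper makes explicit.
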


\begin{remark} \label{rem:compatible-distances}
From~\eqref{eq:distance-compatible-1} we deduce that the orbits of $S_4$ partition $\E^2_{\compatible}$ into four equivalence classes. (Equation~\eqref{eq:distance-compatible-2} relates this to the edges of a cube.)
\end{remark}

Observations~\ref{obs:distance}, \ref{obs:oriented_cube} and~\ref{obs:inversion-compatibility} show that $\E^2_{\incompatible}$ and $\E^3_{\incompatible}$ have the symmetries of a cube:

\begin{lemma}%[Symmetries of $\E^2_{\incompatible}$ and $\E^3_{\incompatible}$]
\label{lem:symmetry2}
$S_4 \times C_2$ is a symmetry group of $\E^2_{\incompatible}$ and of $\E^3_{\incompatible}$.
\end{lemma}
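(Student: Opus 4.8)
The plan is to check that each of the three generators $r_1$, $r_2$, $r_3$ of $S_4$ and the inversion $r_0$ is a symmetry of both $\E^2_{\incompatible}$ and $\E^3_{\incompatible}$ in the sense of Definition~\ref{def:symmetrygroups}(2); since these four maps generate $S_4 \times C_2$ (as noted immediately after Definition~\ref{def:inversion}) and the symmetries of a given $\E_n$ form a group, this suffices. The key point is that a map $\tau \colon \E \to \E$ that preserves distance and compatibility on \emph{all} pairs in $\E$ automatically induces a symmetry of $\E^2_{\incompatible}$ (and, being a bijection of $\E$, a well-defined map on three-tuples, hence a symmetry of $\E^3_{\incompatible}$, provided it preserves distance and compatibility within each such triple --- which again follows from the pairwise property).

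First I would handle $r_1$, $r_2$, $r_3$. By~\eqref{eq:rotations} these maps are distance-preserving and determinant-preserving on all of $\E$, hence in particular on every pair of incompatible strains and on every pair of strains inside an incompatible triple; moreover, determinant-preservation together with Lemma~\ref{lem:SymTr3-compatibility} shows they map incompatible pairs to incompatible pairs and incompatible triples to incompatible triples, so they restrict to symmetries of $\E^2_{\incompatible}$ and $\E^3_{\incompatible}$. Next I would handle $r_0$. By Observation~\ref{obs:inversion-compatibility}, $\det(e-f) = -\det(r_0 e - r_0 f)$ for all $e,f \in \E$, so $r_0$ satisfies the determinant condition (with the minus sign, which is allowed) and preserves compatibility and incompatibility; in particular it maps $\E^2_{\incompatible}$ to itself and $\E^3_{\incompatible}$ to itself. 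For distance: by Observation~\ref{obs:inversion-distance}, $r_0$ fails to preserve distance only on those compatible pairs with $\|e-f\|^2 = 2(\alpha-\beta)^2 + 4(\delta\pm\eps)^2$; since all incompatible pairs have the common value~\eqref{eq:distance-incompatible}, $r_0$ \emph{is} distance-preserving on every pair in $\E^2_{\incompatible}$, and likewise on every pair of (mutually incompatible) strains inside a triple in $\E^3_{\incompatible}$. Hence $r_0$ too is a symmetry of both sets.

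Finally, since $\{r_0,r_1,r_2,r_3\}$ generates $S_4 \times C_2$ and the composition of symmetries is a symmetry, $S_4 \times C_2$ is a symmetry group of $\E^2_{\incompatible}$ and of $\E^3_{\incompatible}$. The only mildly delicate point --- and the place where the statement could conceivably fail --- is the distance condition for $r_0$: $r_0$ is genuinely \emph{not} a symmetry of $\E^2_{\compatible}$ (Lemma~\ref{lem:symmetry1}), so the argument really hinges on the fact that the troublesome distance values occur only among compatible pairs, which is exactly the content of~\eqref{eq:inversion-distance} and~\eqref{eq:distance-compatible-2}. No genuinely new computation is needed: everything has been assembled in Observations~\ref{obs:distance}, \ref{obs:oriented_cube}, \ref{obs:inversion-compatibility} and~\ref{obs:inversion-distance}.
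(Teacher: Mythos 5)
Your proposal is correct and follows essentially the same route as the paper, which derives the lemma directly from Observations~\ref{obs:distance}, \ref{obs:oriented_cube} and~\ref{obs:inversion-compatibility}: the $S_4$ generators preserve distance and determinant everywhere, while $r_0$ preserves compatibility and is distance-preserving on incompatible pairs precisely because all such pairs share the common value~\eqref{eq:distance-incompatible}. Your explicit flagging of the delicate point --- that $r_0$ fails only on certain compatible pairs, which is why it is a symmetry of $\E^2_{\incompatible}$ but not of $\E^2_{\compatible}$ --- is exactly the content the paper relies on implicitly.
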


\begin{remark} \label{rem:incompatible-distances}
From Remark~\ref{rem:cube}\eqref{it:corners-of-faces} it is clear that $S_4 \times C_2$ (or indeed $S_4$) acts transitively on $\E^2_{\incompatible}$, i.e., the orbit of any element of   $\E^2_{\incompatible}$ under $S_4 \times C_2$ equals $\E^2_{\incompatible}$. This explains Observation~\ref{obs:distance}.
Likewise, from Remark~\ref{rem:cube}\eqref{it:corners} it is clear that $S_4 \times C_2$ (or indeed $S_4$) acts transitively on $\E^3_{\incompatible}$.
\end{remark}

%----------------------------------------------------------------------------------------------
\section{The convex polytope formed by the transformation strains}
\label{sec:polytope}

In this section we study $\CH(\E)$, the convex hull of $\E$, which is a five-dimensional polytope~\cite[Dimension.nb]{Chenchiah-Schloemerkemper1ESM}, when, as we have assumed, $\alpha \neq \beta$ (otherwise it is a three-dimensional polytope).  We are interested in the \emph{facets} of $\CH(\E)$. However the \emph{vertices} and \emph{edges} of $\CH(\E)$ are also of interest and we begin with them.

For convenience we set $\Lambda = \{ \lambda \in [0,1]^{12},\ \sum_{i=1}^{12} \lambda_i = 1 \}$. The following linear functionals will  be helpful in studying $\CH(\E)$:

\begin{definition}
The linear functionals $H_i \colon \Sym{3} \to \R$, $i=0,1,2,3$, are defined by 
\begin{alignat*}{2}
 &\phantom{==}H_0\, e 
 &&= -e_{12} - e_{23} - e_{31}, \\
H_1\, e 
 &= H_0\, r_1 e
  &&= -e_{12} + e_{23} + e_{31}, \\
H_2\, e 
 &= H_0\, r_2 e
 &&= e_{12} - e_{23} + e_{31}, \\
H_3\, e 
 &= H_0\, r_3 e
 &&= e_{12} + e_{23} - e_{31}; 
\end{alignat*}
and $H_{ij} \colon \Sym{3} \to \R$, $i,j=1,2,3$, by
\begin{equation*}
H_{ij}\, e
 = e_{ij},
\end{equation*}
where $e_{ij}$ denotes the $(i,j)$-component of the matrix $e$.
\end{definition}
For the convenience of the reader we summarise the images of the transformation strains under the functionals $H_i$, $i=0,1,2,3$, in Table~\ref{tab:H}. %~\cite[Linear\_Functionals.nb]{Chenchiah-Schloemerkemper1ESM}.
We also list the extremisers of these functionals; note that the extremisers when $\eps = \delta$ are the union of the extremisers when $\eps < \delta$ and when $\eps > \delta$. 

\begin{table}[t]
\begin{center}
\begin{tabular}{c|c|c|c|c}
i & $H_0\, e^{(i)}$ & $H_1\, e^{(i)}$ & $H_2\, e^{(i)}$ & $H_3\, e^{(i)}$  \\ \hline \hline
1 & $-\delta - 2 \eps$ & $-\delta + 2 \eps$ & $\delta$ & $\delta$ \\ \hline
2 & $-\delta + 2 \eps$ & $-\delta - 2 \eps$ & $\delta$ & $\delta$ \\ \hline
3 & $\delta$ & $\delta$ & $-\delta - 2 \eps$ & $-\delta + 2 \eps$ \\ \hline
4 & $\delta$ & $\delta$ & $-\delta + 2 \eps$ & $-\delta - 2 \eps$ \\ \hline
5 & $-\delta - 2 \eps$ & $\delta$ & $\delta$ & $-\delta + 2 \eps$ \\ \hline
6 & $-\delta + 2 \eps$ & $\delta$ & $\delta$ & $-\delta - 2 \eps$  \\ \hline
7 & $\delta$ & $-\delta + 2 \eps$ & $-\delta - 2 \eps$ & $\delta$ \\ \hline
8 & $\delta$ & $-\delta - 2 \eps$ & $-\delta + 2 \eps$ & $\delta$ \\ \hline
9 & $-\delta - 2 \eps$ & $\delta$ & $-\delta + 2 \eps$ & $\delta$ \\ \hline
10 & $-\delta + 2 \eps$ & $\delta$ & $-\delta - 2 \eps$ & $\delta$ \\ \hline
11 & $\delta$ & $-\delta + 2 \eps$ & $\delta$ & $-\delta - 2 \eps$  \\ \hline
12 & $\delta$ & $-\delta - 2 \eps$ & $\delta$ & $-\delta + 2 \eps$
\end{tabular}

\bigskip

\begin{tabular}{c|c|c|c|c}
 & $i \mapsto H_0\, e^{(i)}$ & $i \mapsto H_1\, e^{(i)}$ & $i \mapsto H_2\, e^{(i)}$ & $i \mapsto H_3\, e^{(i)}$ \\ \hline \hline
minimisers & 1,5,9 & 2,8,12 & 3,7,10 & 4,6,11 \\ \hline
\end{tabular}

\bigskip

\begin{tabular}{c|c|c|c|c}
Maximisers & $i \mapsto H_0\, e^{(i)}$ & $i \mapsto H_1\, e^{(i)}$ & $i \mapsto H_2\, e^{(i)}$ & $i \mapsto H_3\, e^{(i)}$ \\ \hline \hline
When $\eps < \delta$ & 3,4,7,8,11,12 & 3,4,5,6,9,10 & 1,2,5,6,11,12 & 1,2,7,8,9,10 \\ \hline
When $\eps = \delta$ & $\I \setminus \{1,5,9\}$ & $\I \setminus \{2,8,12\}$ & $\I \setminus \{3,7,10\}$ & $\I \setminus \{4,6,11\}$ \\ \hline
When $\eps > \delta$ & 2,6,10 & 1,7,11 & 4,8,9 & 3,5,12
\end{tabular}
\caption{The images of the transformation strains under the linear functionals $H_i$, $i=0,1,2,3$, along with their extremisers~\cite[Linear\_Functionals.nb]{Chenchiah-Schloemerkemper1ESM}.}
\label{tab:H}
\end{center}
\end{table}

\paragraph{Vertices.}
The \emph{vertices} of $\CH(\E)$ are the transformation strains:

\begin{lemma} \label{lem:verticesofC}
The set of vertices of $\CH(\E)$ is $\E$.
\end{lemma}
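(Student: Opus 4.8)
The plan is to show that no transformation strain $e^{(i)}$ can be written as a proper convex combination of the others; since $\CH(\E)$ is by definition the convex hull of the finite set $\E$, its extreme points (i.e.\ vertices) are exactly those elements of $\E$ that are not convex combinations of the remaining points, so this will give $\F_0(\E) = \E$. The natural tool is a separating linear functional: for each $i \in \I$ I would exhibit a linear functional on $\Sym{3}$ that attains its strict maximum over $\E$ at $e^{(i)}$ alone. Then $e^{(i)}$ cannot lie in $\CH(\E \setminus \{e^{(i)}\})$, and hence $e^{(i)}$ is a vertex.

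First I would use the symmetry group. By Observation~\ref{obs:oriented_cube} (and Lemma~\ref{lem:symmetry1}), $S_4$ acts on $\E$, and by Remark~\ref{rem:cube} the action on $\I$ is transitive (the twelve strains correspond to the twelve edges of a cube, on which the rotation group acts transitively). Moreover $r_0$ (Definition~\ref{def:inversion}, Remark~\ref{rem:inversion}), although not realised by a linear map on $\Sym[2\alpha+\beta]{3}$, still lets us pair up $e^{(i)}$ with $e^{(r_0 i)}$; combined with $S_4$ this gives enough symmetry that it suffices to prove the claim for a single representative, say $i = 1$, and then transport the separating functional by the rotations $r_1, r_2, r_3$ (which are linear and distance/determinant preserving). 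So the crux reduces to: find a linear functional on $\Sym{3}$ uniquely maximised over $\E$ at $e^{(1)}$.

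For this I would look at the functionals already introduced in Table~\ref{tab:H}, or simple combinations of them together with the $H_{ij}$. From the table, $H_0$ has minimisers $\{1,5,9\}$, so $-H_0$ is maximised on $\{1,5,9\}$; to single out $e^{(1)}$ I would add a small multiple of a functional that separates $e^{(1)}$ from $e^{(5)}$ and $e^{(9)}$ — e.g.\ one built from the diagonal components $H_{11},H_{22},H_{33}$, which distinguish whether the ``$\beta$'' entry sits in the $(1,1)$, $(2,2)$ or $(3,3)$ slot. Concretely, a functional of the form $-H_0 + t(H_{22}+H_{33}-2H_{11})$ for small $t>0$ should have $e^{(1)}$ as its unique maximiser over $\E$, since on $\{1,5,9\}$ the correction term is maximal precisely at $e^{(1)}$ (whose $(1,1)$-entry is $\alpha$, the smaller of $\alpha,\beta$ under our standing assumption $\alpha \neq \beta$), while for all other strains the dominant term $-H_0$ already gives a strictly smaller value and $t$ can be chosen small enough not to upset this. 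Verifying the inequalities is a finite check over the twelve rows of Table~\ref{tab:H} plus the diagonal data, which the paper can delegate to Mathematica~\cite[Vertices.nb]{Chenchiah-Schloemerkemper1ESM}, or carry out by hand.

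The main obstacle is purely bookkeeping: one must confirm that the chosen functional's value at $e^{(1)}$ strictly exceeds its value at each of the other eleven strains, for all admissible lattice parameters (including the degenerate-looking but still permitted case $\eps = \delta$, and without assuming anything beyond $\eps,\delta>0$, $\alpha\neq\beta$, and genericity). Once the single representative $i=1$ is handled, transitivity of the $S_4$-action on $\I$ and the linearity of $r_1,r_2,r_3$ immediately yield a separating functional for every $i\in\I$, so every element of $\E$ is a vertex; and since $\CH(\E)$ has no extreme points outside $\E$, the set of vertices is exactly $\E$.
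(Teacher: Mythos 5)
Your overall strategy (expose each $e^{(i)}$ by a linear functional, then transport by the $S_4$-action) is sound and is essentially the route the paper takes with the functionals $H_{33}$ and $H_0$. But the concrete functional you propose does not work, and this is the entire content of the proof. On the set $\{1,5,9\}$ of maximisers of $-H_0$, the diagonals of $e^{(1)}$, $e^{(5)}$, $e^{(9)}$ are $(\alpha,\alpha,\beta)$, $(\alpha,\beta,\alpha)$, $(\beta,\alpha,\alpha)$ respectively, so your correction term takes the values
\begin{equation*}
(H_{22}+H_{33}-2H_{11})\,e^{(1)} = \beta-\alpha, \qquad
(H_{22}+H_{33}-2H_{11})\,e^{(5)} = \beta-\alpha, \qquad
(H_{22}+H_{33}-2H_{11})\,e^{(9)} = 2(\alpha-\beta).
\end{equation*}
Thus $e^{(1)}$ and $e^{(5)}$ \emph{tie}: both have $(1,1)$-entry $\alpha$, and your functional is symmetric in the $(2,2)$ and $(3,3)$ slots, which is exactly where $e^{(1)}$ and $e^{(5)}$ differ. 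So $-H_0+t(H_{22}+H_{33}-2H_{11})$ is not uniquely maximised at $e^{(1)}$ for any $t$, and the separation argument collapses. A second, independent problem is your parenthetical ``$\alpha$, the smaller of $\alpha,\beta$'': the standing assumption is only $\alpha\neq\beta$, and for NiTi one in fact has $\alpha>\beta$ (Table~\ref{tab:parameters}), so any correction built from the diagonal functionals must have its sign chosen according to $\sign(\beta-\alpha)$; a single fixed formula cannot be asserted to work ``for all admissible lattice parameters''.

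Both defects are repairable. Either take the correction to be $\sigma H_{33}$ with $\sigma=\sign(\beta-\alpha)$, which does strictly dominate on $\{1,5,9\}$ (it distinguishes which diagonal slot carries $\beta$); or, cleaner and sign-free, argue in two stages as the paper does: the \emph{equality} $H_{33}e^{(1)}=\beta$ together with $H_{33}e^{(i)}\in\{\alpha,\beta\}$ and $\alpha\neq\beta$ forces all weight onto $\{e^{(1)},\dots,e^{(4)}\}$ without knowing which of $\alpha,\beta$ is larger, and then $H_0$ (whose value $-\delta-2\eps$ at $e^{(1)}$ is strictly below its values $-\delta+2\eps$ and $\delta$ at $e^{(2)},e^{(3)},e^{(4)}$, using only $\eps,\delta>0$) finishes the job. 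With one of these fixes your proof is correct and coincides with the paper's.
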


\begin{proof}
We show this explicitly for $e^{(1)}$, the proof for the other vertices follows from symmetry.

Let $\lambda \in \Lambda$ such that $e^{(1)} = \sum_{i=1}^{12} \lambda_i e^{(i)}$. Then,
\begin{equation*}
 \beta = H_{33} e^{(1)}  = \sum_{i=1}^{12} \lambda_i H_{33}  e^{(i)} = \sum_{i=1}^{4} \lambda_i \beta + \sum_{i=5}^{12} \lambda_i \alpha,
\end{equation*}
from which we conclude that $\lambda_i = 0$ for $i=5,\dots,12$ and thus $e^{(1)} = \sum_{i=1}^{4} \lambda_i e^{(i)}$.
Now, using $H_0$ we obtain
\begin{equation*}
 -\delta - 2\epsilon = H_0 e^{(1)} = \sum_{i=1}^{4} \lambda_i H_0 e^{(i)}.
\end{equation*}
Since $\eps, \delta > 0$ it is easy to see (cf.\ Table~\ref{tab:H}) that $\lambda_1 = 1$ and $\lambda_i = 0$, $i=2,3,4$. We conclude that $e^{(1)}$ is a vertex.
\end{proof}

In the interest of brevity in future proofs of extremality of subsets we will only name the relevant family of four-dimensional hyperplanes, eg.\ for the above lemma we would say that this follows from $H_{33}$ and $H_0$.

\paragraph{Edges.}
Contrary to what we are used to in two and three dimensions (see Remark~\ref{rem:local1} below), the convex hull of \emph{every} pair of vertices is an edge of $\CH(\E)$:

\begin{lemma} \label{lem:edges}
The set of edges of $\CH(\E)$ is $\{ [e,f]\ |\ e,f \in \E \}$.
\end{lemma}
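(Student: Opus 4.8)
The plan is to show that for every pair $e,f\in\E$ the segment $[e,f]$ is a one-dimensional extreme subset of $\CH(\E)$, i.e.\ an edge in the sense of Definition~\ref{def:polytope}. Since $\CH(\E)$ is a polytope and its vertices are exactly $\E$ (Lemma~\ref{lem:verticesofC}), it suffices to exhibit, for each such pair, a supporting hyperplane of $\CH(\E)$ whose intersection with $\CH(\E)$ is precisely $[e,f]$; equivalently, a linear functional $\ell$ on $\Sym{3}$ that attains its maximum over $\E$ exactly on $\{e,f\}$. By the $S_4$-symmetry of $\E$ (Observation~\ref{obs:oriented_cube}, Lemma~\ref{lem:symmetry1}) and the transitivity statements in Remark~\ref{rem:cube}, it is enough to check one representative in each $S_4$-orbit of pairs; then conjugating the separating functional by the appropriate $r\in S_4$ handles the rest.

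The key step is therefore to enumerate the orbits of $\E^2_{\compatible}$ and $\E^2_{\incompatible}$ under $S_4$ and produce a separating functional for one representative of each. For the incompatible pairs, Remark~\ref{rem:incompatible-distances} tells us $S_4$ acts transitively, so a single functional (built from the $H_i$ and $H_{ij}$ of the preceding table, or a small combination thereof) suffices; for instance one expects a functional like a suitable combination of $H_0$ and an off-diagonal $H_{ij}$ to single out $\{e^{(1)},e^{(2)}\}$. For the compatible pairs, Remark~\ref{rem:compatible-distances} says there are four $S_4$-orbits, so four representative pairs must be separated; again the functionals $H_i$, $H_{ij}$ and their $r_i$-conjugates, possibly in linear combination, should provide the needed supporting hyperplanes, and Table~\ref{tab:H} already records enough of the relevant extremiser data to guess the right combinations. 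In each case, once a candidate functional $\ell$ is written down, one verifies (a Mathematica-assisted but non-numerical check, in the spirit of the paper's other computations) that $\ell$ is maximised over the twelve strains exactly at the two chosen ones.

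Concretely I would proceed as follows. First, invoke Lemma~\ref{lem:verticesofC} to know the extreme points are $\E$, so any edge is $[e,f]$ for some $e,f\in\E$, and conversely it remains only to show each $[e,f]$ is extreme. Second, reduce by symmetry: using Lemma~\ref{lem:symmetry1} and the orbit descriptions (Remarks~\ref{rem:cube}, \ref{rem:compatible-distances}, \ref{rem:incompatible-distances}) it suffices to treat one pair per orbit — one incompatible pair and four compatible pairs. Third, for each representative pair exhibit an explicit linear functional on $\Sym{3}$ (from the $H_i$, $H_{ij}$ or short combinations thereof) whose set of maximisers over $\E$ is exactly that pair, hence whose level set supports $\CH(\E)$ along exactly $[e,f]$; this makes $[e,f]$ an extreme subset, and it is one-dimensional since $e\neq f$ and all twelve strains are distinct. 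Fourth, transport each functional by the relevant $r\in S_4$ to cover the whole orbit, invoking that $r$ preserves $\E$ and maps supporting hyperplanes to supporting hyperplanes.

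The main obstacle is the bookkeeping in the third step: finding, for each of the (few) orbit representatives, a functional that isolates exactly two strains as maximisers rather than three or more — the twelve strains are quite symmetric, and the functionals $H_0,H_1,H_2,H_3$ individually each have three minimisers and several maximisers (Table~\ref{tab:H}), so one genuinely needs to combine them (and the $H_{ij}$) cleverly, with the choice depending on whether the pair sits along an edge of the auxiliary cube, across a face diagonal, or as an antipodal ($r_0$-related) pair, and possibly on the sign of $\eps-\delta$. This is exactly the kind of finite, explicit verification the authors defer to their Mathematica notebook, and the natural expectation is that the proof records the representative functionals and states that the maximiser computation (non-numerical) confirms the claim. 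A subtlety worth flagging: one should make sure the chosen functional does not accidentally also support along a longer segment when $\eps=\delta$, where extra coincidences among the $H_i$-values occur (cf.\ the maximiser table), so the $\eps=\delta$ case may need the functional to be perturbed or a separate representative chosen, but the argument structure is unchanged.
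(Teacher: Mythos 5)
Your overall strategy is viable and close in spirit to the paper's, but as written it has a genuine gap: the entire substance of the lemma is the exhibition, for each orbit representative, of a linear functional whose set of maximisers over $\E$ is \emph{exactly} the given pair, and you never produce these functionals --- you only assert that ``suitable combinations of the $H_i$, $H_{ij}$ should provide'' them. You correctly identify this as the main obstacle, but then defer it to an unperformed computation, so nothing is actually proved. The difficulty is real, not bookkeeping: e.g.\ $H_{12}$ exposes $\{e^{(1)},e^{(2)}\}$ only when $\eps<\delta$ (for $\eps>\delta$ its maximisers are $\{e^{(5)},e^{(8)},e^{(9)},e^{(12)}\}$), and for an incompatible pair such as $\{e^{(1)},e^{(6)}\}$ none of the listed functionals is simultaneously constant on the pair and strictly larger there than on the remaining ten strains, so one must construct a genuinely new combination and verify twelve inequalities for each representative, with cases according to the signs of $\alpha-\beta$ and $\eps-\delta$.

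The paper avoids this by \emph{not} insisting on a single exposing functional. Instead it writes a point of $[e,f]$ as $\sum_i\lambda_i e^{(i)}$ and applies, one at a time, functionals that are merely constant on the pair and extremal (or two-valued) on $\E$; each such functional only needs to kill \emph{some} of the $\lambda_i$. After two or three such steps the support is reduced to an affinely independent set of three or four strains (e.g.\ $\{e^{(1)},e^{(2)},e^{(5)},e^{(6)}\}$, whose affine span is three-dimensional), and uniqueness of barycentric coordinates in the resulting tetrahedron or triangle finishes the argument. This two-stage reduction is strictly easier to execute than finding an exactly-two-maximiser functional. Moreover, for incompatible edges with $\eps>\delta$ the paper does not run the functional argument at all: it reads extremality off the facet enumeration (Observation~\ref{obs:e>d}), noting that every such edge lies in at least four facets. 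If you want to keep your single-hyperplane approach you must either exhibit the five representative functionals explicitly (with the $\eps=\delta$ and $\eps>\delta$ cases treated, as you suspect, separately), or adopt one of these two workarounds.
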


Henceforth, a \emph{compatible edge} is the convex hull of a pair of compatible vertices, and an \emph{incompatible edge} is the convex hull of a pair of incompatible vertices.

We prove Lemma~\ref{lem:edges} here except for incompatible edges when $\eps > \delta$. In Remark~\ref{rem:edges} we present a Mathematica-aided proof which is valid for $\eps \neq \delta$.

\begin{proof}[for compatible edges and, when $\eps \leqslant \delta$, for incompatible edges]
By symmetry it suffices to prove that the eleven edges $[e^{(1)},e^{(i)}]$, $i \in \I \setminus \{1\}$, are extremal.

This is easy to verify for the compatible edges: For example,  $H_{33}$ and $H_{12}$ show that $[e^{(1)},e^{(2)}]$ is extremal. The proof for the other compatible edges (with $e^{(1)}$ as a vertex) is similar. %; Table~\ref{tab:edges} summarises the linear functionals that are needed. 

We now turn to the incompatible edges, for example $[e^{(1)},e^{(6)}]$. Let $\mu \in [0,1]$ and $\lambda \in \Lambda$ such that
\begin{equation*}
\mu e^{(1)} + (1-\mu) e^{(6)}
 = \sum_{i=1}^{12} \lambda_i e^{(i)}.
\end{equation*}

Consider first the case $\eps < \delta$. Then $H_{11}$ and $H_2$ show that in fact
\begin{equation*}
\mu e^{(1)} + (1-\mu) e^{(6)}
 = \sum_{i \in \{1,2,5,6\}} \lambda_i e^{(i)}.
\end{equation*}
However $\dim \aff \Span\{ e^{(1)},e^{(2)},e^{(5)},e^{(6)} \} = 3$~\cite[Dimension\_Calculations.nb]{Chenchiah-Schloemerkemper1ESM} and thus \\ $\CH(\{e^{(1)},e^{(2)},e^{(5)},e^{(6)}\})$ is a three-dimensional tetrahedron. It follows that $\lambda_5 = \lambda_2 = 0$ and thus $[e^{(1)},e^{(6)}]$ is extremal. 

Next consider the case $\eps = \delta$. Then $H_{11}$, $H_{13}$, $H_1$ and $H_2$ show that in fact
\begin{equation*}
\mu e^{(1)} + (1-\mu) e^{(6)}
 = \sum_{i \in \{1,5,6\}} \lambda_i e^{(i)}.
\end{equation*}
However $\dim \aff \Span\{ e^{(1)},e^{(5)},e^{(6)} \} = 2$~\cite[Dimension\_Calculations.nb]{Chenchiah-Schloemerkemper1ESM} and thus \\ $\CH(\{e^{(1)},e^{(5)},e^{(6)}\})$ is a triangle. It follows that $\lambda_5 = 0$ and thus $[e^{(1)},e^{(6)}]$ is extremal. 

The extremality of the other incompatible edges (with $e^{(1)}$ as a vertex) follows from symmetry. %(However Table~\ref{tab:edges} summarises the linear functionals and dimension calculations that show this directly when $\eps < \delta$.)
\end{proof}

\begin{remark}
\label{rem:local1}
In dimensions less than four the only polytopes for which the convex hull of every pair of vertices is an edge are the $n$-tetrahedra (line segments, triangles and tetrahedra in dimensions one, two and three, respectively). However for every $n>3$ and every $d>n$ there exists an n-dimensional convex polytope with d vertices for which the convex hull of every pair of vertices is extremal. See, eg.\ \cite[Section 13]{Brondsted:1982}, \cite[Section 3]{Ewald:1996} or \cite[Corollary 0.8]{Ziegler:1994}.
\end{remark}

\paragraph{The facets of $\CH(\E)$.}

The algorithm we use to determine the facets of $\CH(\E)$ is as follows~\cite[Faceting.nb]{Chenchiah-Schloemerkemper1ESM}. It assumes that the affine span of the set is five-dimensional, that the cardinality of the set is small and thus that computational efficiency is not a consideration.

\begin{algorithm} \label{alg:facets}
\par \indent
\begin{enumerate}
\item First we form a set of all four-dimensional tetrahedra with vertices in $\E$ as follows:
	\begin{enumerate}
	\item Pick all five-tuples from $\E$:
	\begin{equation*}
	\{ S \subset \E\ |\ \#S = 5 \}.
	\end{equation*}
	\item Discard those five-tuples whose affine span is not four dimensional:
	\begin{equation*}
	\G_1 := \{ S \subset \E\ |\ \#S = 5, \dim \aff \Span(S) = 4 \}.
	\end{equation*}
	\end{enumerate}
\item Of these tetrahedra we discard those whose convex hull is not contained in $\partial \CH(\E)$. We do this as follows:
	\begin{enumerate}
	\item Let $\G_2 = \G_1$.
	\item \label{it:repeatS1} Pick $S \in \G_2$.
	\item Translate the origin to some $e \in S$. (The next two steps are carried out in this co-ordinate system.)
	\item Compute a normal $n \in \Sym[2\alpha+\beta]{3}$ to $\aff \Span(S)$.
	\item \label{it:repeatE1} If $\langle n, e \rangle$ has the same sign for all $e \in \E \setminus S$ then $\CH(S) \subset \partial \CH(\E)$. Otherwise remove $S$ from $G_2$
	\item Repeat steps~\eqref{it:repeatS1} to~\eqref{it:repeatE1} till all tetrahedra in $\G_2$ have been tested. 
	\end{enumerate}
	We now obtain 
	\begin{equation*}
	\G_2 = \{ S \subset \E\ |\ \#S = 5, \dim \aff \Span(S) = 4, \CH(S) \subset \partial \CH(\E) \}.
	\end{equation*}
	This is the set of all four-dimensional tetrahedra (with vertices in $\E$) whose union is $\partial \CH(\E)$.
\item The final step is to form the facets of $\CH(\E)$ by judiciously taking unions of sets in $\G_2$ as follows:
	\begin{enumerate}
	\item Let $\G_3 = \G_2$.
	\item \label{it:repeatS2} Pick $S_1,S_2 \in \G_3$.
	\item \label{it:repeatE2} If $\dim \aff \Span (S_1 \cup S_2) = 4$ then $S_1$ and $S_2$ are parts of the same facet. In $\G_3$, replace $S_1$ and $S_2$ by $S_1 \cup S_2$.
	\item Repeat steps~\eqref{it:repeatS2} and~\eqref{it:repeatE2} until every union of sets in $G_3$ increases the dimension, i.e., until it is true that
	\begin{equation} \label{eq:local1}
	\forall S_1, S_2 \in \G_3, \qquad \dim \aff \Span(S_1 \cup S_2) =4 \implies S_1 = S_2.
	\end{equation} 
	\end{enumerate}
	This is the set of all the facets of $\CH(\E)$. Note that it is a set of $n$-tuples where $n \geqslant 5$.
\end{enumerate}
\end{algorithm}

The results of a Mathematica implementation of Algorithm~\ref{alg:facets} are summarised in Observations~\ref{obs:e<d}, \ref{obs:e=d} and~\ref{obs:e>d} below. These reveal that the facet structure depends on whether $\eps < \delta$, $\eps = \delta$ or $\eps > \delta$. All three possibilities are realisable in that there exist cubic-to-monoclinic-I phase transformations corresponding to each. (See, e.g., \cite[Fig.~4.3 and~(4.11) on p.~52--53]{Bhattacharya:2003} for the relationship between $\eps$, $\delta$ and the unit cells of the cubic and monoclinic lattices.) However, curiously $\eps < \delta$ for all the monoclinic-I materials of which we are aware, cf.\ Table~\ref{tab:parameters}; we return to this point in Section~\ref{sec:conclusions}.

In the observations below, each group of facets is the orbit under $S_4$ of any facet in it~\cite[Facet\_Symmetry.nb]{Chenchiah-Schloemerkemper1ESM}. Within each group the facets are listed in lexical order. Facets that occur for both $\eps < \delta$ and $\eps > \delta$ are shown in bold face.

\begin{obs}[Monoclinic-Ia martensite, $\eps < \delta$]
\label{obs:e<d}
When $\eps < \delta$, the 25 four-dimensional facets of $\CH(\E)$ consist of the convex hulls of

\begin{enumerate}
\item 12 facets with 5 vertices each: \label{it:local12}
 \begin{alignat*}{4}
 & \{1, 2, 3, 7, 10\}, \
 && \{1, 2, 4, 6, 11\}, \
 && \{1, 3, 4, 5, 9\}, \
 && \{1, 5, 7, 8, 9\}, \\
 & \{1, 5, 9, 11, 12\}, \
 && \{2, 3, 4, 8, 12\}, \
 && \{2, 5, 6, 8, 12\}, \
 && \{2, 8, 9, 10, 12\},\\
 & \{3, 5, 6, 7, 10\}, \
 && \{3, 7, 10, 11, 12\}, \
 && \{4, 6, 7, 8, 11\}, \
 && \{4, 6, 9, 10, 11\};
 \end{alignat*}
\item 4 pairs of $T_3$s (see Section~\ref{sec:nonlaminates}), each facet is invariant under $r_0$:
  \begin{equation} \label{eq:T3-pair}
  \{1, 2, 5, 6, 11, 12\}, \ \{1, 2, 7, 8, 9, 10\}, \ \{3, 4, 5, 6, 9, 10\}, \ \{3, 4, 7, 8, 11, 12\}
  \end{equation}
\item 6 pairs of pairwise compatible three-tuples: \label{it:local13}
 \begin{alignat*}{3}
 & \mathbf{\{1, 2, 5, 8, 9, 12\}}, \
 && \mathbf{\{1, 3, 5, 7, 9, 10\}}, \
 && \mathbf{\{1, 4, 5, 6, 9, 11\}}, \\
 & \mathbf{\{2, 3, 7, 8, 10, 12\}},\
 && \mathbf{\{2, 4, 6, 8, 11, 12\}}, \
 && \mathbf{\{3, 4, 6, 7, 10, 11\}};
 \end{alignat*}
\item 3 facets with 8 vertices each; each facet is invariant under $r_0$:
 \begin{equation} \label{eq:8vertex-facets}
 \mathbf{\{1, 2, 3, 4, 5, 6, 7, 8\}}, \ \mathbf{\{1, 2, 3, 4, 9, 10, 11, 12\}}, \ \mathbf{\{5, 6, 7, 8, 9, 10, 11, 12\}}.
 \end{equation}
\end{enumerate}
\end{obs}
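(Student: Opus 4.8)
The statement is the output of Algorithm~\ref{alg:facets}, so the plan is to verify that output. The verification splits into two parts. \emph{Part (i): each listed set is a facet.} For each of the $12+4+6+3=25$ sets $S$ listed in Observation~\ref{obs:e<d} one must check that $\dim\aff\Span(S)=4$ and that $\CH(S)\subseteq\partial\CH(\E)$; since $\dim\CH(\E)=5$, this makes $\CH(S)$ a codimension-one face, i.e.\ a facet. \emph{Part (ii): there are no others.} Because $\CH(\E)$ is equivariant under the $S_4$-action generated by $r_1,r_2,r_3$ (Observation~\ref{obs:oriented_cube}), this action permutes the facets; hence in Part~(i) it suffices to treat one representative of each of the four groups, and once the orbit of each representative is computed from Table~\ref{tab:generators} the grouping claim (``each group is an $S_4$-orbit'') follows as well.

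For Part~(i) the rank computation $\dim\aff\Span(S)=4$ is routine. To see $\CH(S)\subseteq\partial\CH(\E)$ I would, in each case, exhibit a linear functional $\phi$ on $\Sym[2\alpha+\beta]{3}$ whose maximum over $\E$ is attained exactly on $S$; then $\CH(\E)\cap\{\phi=\max_{\E}\phi\}=\CH(S)$ is a face of dimension $4$, hence a facet. For the three $8$-vertex facets of \eqref{eq:8vertex-facets} the functionals $H_{11},H_{22},H_{33}$ do the job: reading diagonals off Table~\ref{tab:strains}, e.g.\ $H_{11}e^{(i)}=\alpha$ for $i\in\{1,\dots,8\}$ and $=\beta$ otherwise, so by $\alpha\neq\beta$ the extremiser set of $H_{11}$ is $\{1,\dots,8\}$, and similarly for $H_{22}$ and $H_{33}$. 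For the four $T_3$-pair facets of \eqref{eq:T3-pair} the functionals $H_0,H_1,H_2,H_3$ do the job: in the case $\eps<\delta$ their maximiser sets, read from Table~\ref{tab:H}, are $\{3,4,7,8,11,12\}$, $\{3,4,5,6,9,10\}$, $\{1,2,5,6,11,12\}$, $\{1,2,7,8,9,10\}$. For the twelve $5$-vertex facets and the six pairs of compatible triples no single named functional works; for each of these two orbit representatives one instead computes a normal $n$ to $\aff\Span(S)$ (by solving a small linear system, or writing $n$ as a combination of the $H_i$ and $H_{ij}$) and checks that $\langle n,e^{(i)}\rangle$ has one and the same sign for all $i\in\I$ with $e^{(i)}\notin S$. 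This sign check is the only point at which the hypothesis $\eps<\delta$ is genuinely used (it is also what distinguishes Observation~\ref{obs:e<d} from~\ref{obs:e=d} and~\ref{obs:e>d}).

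For Part~(ii) I would use that every facet $G$ of $\CH(\E)$ has its vertices among $\E$ (Lemma~\ref{lem:verticesofC}), and, being a $4$-dimensional polytope, is triangulable using its own vertices; hence $G$ contains a $4$-dimensional tetrahedron with vertices in $\E$, while the relative interiors of distinct facets are disjoint. It therefore suffices to carry out steps (1)--(2) of Algorithm~\ref{alg:facets}: enumerate the $5$-subsets of $\E$ with $4$-dimensional affine span, discard those failing the sign test (so that their convex hull is not contained in $\partial\CH(\E)$), and verify that the convex hull of every surviving tetrahedron lies inside one of the $25$ listed sets $F_1,\dots,F_{25}$. Equivalently one shows $\bigcup_{j}\CH(F_j)=\partial\CH(\E)$ and that no $\CH(F_j)$ is contained in another; a facet contained in $\bigcup_j\CH(F_j)$ must then, being a maximal proper face, coincide with one of the $\CH(F_j)$, which proves completeness.

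The main obstacle is the volume of cases rather than any single difficulty: Part~(ii) requires sifting all $\binom{12}{5}=792$ five-subsets, and in Part~(i) the eighteen facets with five or six vertices require ad hoc supporting functionals and explicit sign computations, for which the hypotheses $\alpha\neq\beta$ and $\eps<\delta$ must be invoked with care. Each individual check is elementary and purely symbolic, which is why the argument can be, and is, delegated to Mathematica; the conceptual content reduces to the supporting-hyperplane description of facets together with the $S_4$-equivariance that collapses Part~(i) to four orbit representatives.
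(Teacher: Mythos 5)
Your proposal is correct and follows essentially the same route as the paper: the result is obtained by verifying the output of Algorithm~\ref{alg:facets} in Mathematica, using $S_4$-equivariance to reduce to orbit representatives, the functionals $H_{11},H_{22},H_{33}$ and $H_0,\dots,H_3$ for the $r_0$-invariant facets, and explicitly computed normals with a sign check for the five-vertex and compatible-triple facets. Your added justification of completeness (every facet, being a $4$-polytope with vertices in $\E$, contains a $4$-dimensional tetrahedron with vertices in $\E$) merely makes explicit a step the paper leaves implicit in the algorithm.
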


\begin{obs}[$\eps = \delta$]
\label{obs:e=d}
When $\eps = \delta$, the 7 four-dimensional facets of $\CH(\E)$ consist of the convex hulls of
\begin{enumerate}
\item 4 facets with 9 vertices each:
 \begin{alignat}{2}
 & \{1, 2, 3, 5, 7, 8, 9, 10, 12\}, \
 && \{1, 2, 4, 5, 6, 8, 9, 11, 12\}, \notag \\
 & \{1, 3, 4, 5, 6, 7, 9, 10, 11\}, \
 && \{2, 3, 4, 6, 7, 8, 10, 11, 12\}; \label{eq:9vertex-facets}
 \end{alignat}
\item 3 facets with 8 vertices each; each facet is invariant under $r_0$:
 \begin{equation}
 \mathbf{\{1, 2, 3, 4, 5, 6, 7, 8\}}, \ \mathbf{\{1, 2, 3, 4, 9, 10, 11, 12\}}, \ \mathbf{\{5, 6, 7, 8, 9, 10, 11, 12\}}.\tag{\ref{eq:8vertex-facets}}
 \end{equation}
\end{enumerate}
\end{obs}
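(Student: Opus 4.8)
The plan is to produce, for each of the seven claimed facets, an explicit supporting hyperplane of $\CH(\E)$ whose contact set with $\E$ is exactly the listed vertex set, and then to argue separately that these exhaust $\partial\CH(\E)$. Since $\E \subset \Sym[2\alpha+\beta]{3}$, the ambient space is the five-dimensional affine space $\Sym[2\alpha+\beta]{3}$, and a facet is a four-dimensional extreme subset. Thus to certify that a candidate set $F \subset \E$ spans a facet it suffices to exhibit $n \in \Sym{3}$ and $c \in \R$ with $\langle n, e \rangle \leqslant c$ for all $e \in \E$, with equality precisely for $e \in F$ (so that all other strains lie strictly on one side, forcing $F$ to be exactly the vertex set in the hyperplane), together with the rank computation $\dim \aff\Span(F) = 4$.

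For the three eight-vertex facets in~\eqref{eq:8vertex-facets} I would use the diagonal functionals $H_{11}$, $H_{22}$, $H_{33}$, exactly as in the proof of Lemma~\ref{lem:verticesofC}. Each of these takes only the two values $\alpha$ and $\beta$ on $\E$ (for instance $H_{33}\,e^{(i)} = \beta$ for $i \in \{1,2,3,4\}$ and $H_{33}\,e^{(i)} = \alpha$ otherwise), so, since $\alpha \neq \beta$, its extremal value is attained on a set of eight strains, namely $\{5,6,7,8,9,10,11,12\}$ for $H_{33}$ and the analogous eight-element sets for $H_{11}$ and $H_{22}$; a rank computation confirms each such set affinely spans four dimensions. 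This step is insensitive to the sign of $\eps - \delta$, which is why these facets are common to Observations~\ref{obs:e<d}, \ref{obs:e=d} and~\ref{obs:e>d}.

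For the four nine-vertex facets in~\eqref{eq:9vertex-facets} I would use the functionals $H_0$, $H_1$, $H_2$, $H_3$, and this is where the hypothesis $\eps = \delta$ enters decisively. From Table~\ref{tab:H}, when $\eps = \delta$ each $H_i$ takes only the two values $\delta$ and $-3\delta$ on $\E$: the value $-3\delta$ on the three strains $\{1,5,9\}$ for $H_0$, $\{2,8,12\}$ for $H_1$, $\{3,7,10\}$ for $H_2$, $\{4,6,11\}$ for $H_3$, and the value $\delta$ on the remaining nine strains. Since $\delta > 0$, the maximum of $H_i$ over $\E$ is $\delta$, with contact set precisely the nine-strain set of~\eqref{eq:9vertex-facets}, and one checks by a rank computation that these nine strains affinely span a four-dimensional subspace. (When $\eps \neq \delta$ the functional $H_i$ instead takes three distinct values, its maximum is attained on only six strains, and the smaller faces so obtained do not merge; this is the mechanism behind the genuinely different facet structure of Observations~\ref{obs:e<d} and~\ref{obs:e>d}, which at $\eps = \delta$ coalesce into the nine-vertex facets.)

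The remaining, and main, obstacle is \emph{completeness}: proving that $\partial\CH(\E)$ is the union of exactly these seven facets and no others. I would establish this by running Algorithm~\ref{alg:facets}: enumerate the $\binom{12}{5}$ five-element subsets of $\E$, discard those not affinely spanning four dimensions, discard those whose convex hull is not contained in $\partial\CH(\E)$ via the normal-sign test, and merge the survivors into maximal coplanar families; one then verifies that exactly seven families remain and that they coincide with the seven sets listed. This is a finite but not transparent computation, carried out in Mathematica~\cite[Faceting.nb]{Chenchiah-Schloemerkemper1ESM}, and the seven hyperplane certificates constructed above serve as an independent check that each resulting family is genuinely a facet. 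A useful consistency check is that, by Lemma~\ref{lem:edges}, every edge of each of the seven facets has the form $[e,f]$ with $e,f \in \E$, and that the seven facets together account for all such pairs lying on $\partial\CH(\E)$.
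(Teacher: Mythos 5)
Your proposal is correct and follows essentially the same route as the paper: the paper likewise certifies the facets in~\eqref{eq:8vertex-facets} via $H_{11},H_{22},H_{33}$ and those in~\eqref{eq:9vertex-facets} via $H_0,\dots,H_3$ (see the remark following Observation~\ref{obs:e>d} and Table~\ref{tab:H}), and relies on the Mathematica implementation of Algorithm~\ref{alg:facets} for exhaustiveness. Your added explanation of why the $\eps<\delta$ and $\eps>\delta$ extremiser sets coalesce at $\eps=\delta$ is consistent with the paper's tabulated extremisers.
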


\begin{obs}[Monoclinic-Ib martensite, $\eps > \delta$]
\label{obs:e>d}
When $\eps > \delta$, the 33 four-dimensional facets of $\CH(\E)$ consist of the convex hulls of 
\begin{enumerate}
\item 12 facets which are the images under $r_0$ of the five-vertex facets that occur when $\eps < \delta$:
  \begin{alignat*}{4}
  & \{1, 2, 3, 5, 12\}, \
  && \{1, 2, 4, 8, 9\}, \
  && \{1, 3, 4, 7, 11\}, \
  && \{1, 5, 6, 7, 11\}, \\
  & \{1, 7, 9, 10, 11\}, \
  && \{2, 3, 4, 6, 10\}, \
  && \{2, 6, 7, 8, 10\}, \
  && \{2, 6, 10, 11, 12\}, \\
  & \{3, 5, 7, 8, 12\}, \
  &&\{3, 5, 9, 10, 12\}, \
  && \{4, 5, 6, 8, 9\}, \
  && \{4, 8, 9, 11, 12\};
  \end{alignat*}
\item 12 other five vertex facets (together these are invariant under $r_0$):
 \begin{alignat*}{4} 
  & \{1, 3, 5, 9, 12\}, \
 && \{1, 3, 7, 10, 11\}, \
 && \{1, 4, 5, 8, 9\}, \
 && \{1, 4, 6, 7, 11\}, \\
 & \{1, 5, 7, 9, 11\}, \
 && \{2, 3, 5, 8, 12\}, \
 && \{2, 3, 6, 7, 10\}, \
 && \{2, 4, 6, 10, 11\}, \\
 & \{2, 4, 8, 9, 12\}, \  
 && \{2, 6, 8, 10, 12\}, \
 && \{3, 5, 7, 10, 12\}, \
 && \{4, 6, 8, 9, 11\};
\end{alignat*}
\item 6 pairs of pairwise compatible three-tuples:
 \begin{alignat*}{3}
 & \mathbf{\{1, 2, 5, 8, 9, 12\}}, \
 && \mathbf{\{1, 3, 5, 7, 9, 10\}}, \
 && \mathbf{\{1, 4, 5, 6, 9, 11\}}, \\
 & \mathbf{\{2, 3, 7, 8, 10, 12\}},\
 && \mathbf{\{2, 4, 6, 8, 11, 12\}}, \
 && \mathbf{\{3, 4, 6, 7, 10, 11\}};
 \end{alignat*}
\item 3 facets with 8 vertices each; each facet is invariant under $r_0$:
 \begin{equation}
 \mathbf{\{1, 2, 3, 4, 5, 6, 7, 8\}}, \ \mathbf{\{1, 2, 3, 4, 9, 10, 11, 12\}}, \ \mathbf{\{5, 6, 7, 8, 9, 10, 11, 12\}}. \tag{\ref{eq:8vertex-facets}}
 \end{equation}
\end{enumerate}
\end{obs}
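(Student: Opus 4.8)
This observation records the combinatorial type of $\CH(\E)$ in the regime $\eps>\delta$; the plan is to carry out Algorithm~\ref{alg:facets} in this regime, and below I describe the human-checkable skeleton behind the Mathematica computation of~\cite[Faceting.nb]{Chenchiah-Schloemerkemper1ESM}. The first step is reduction by symmetry: by Observation~\ref{obs:oriented_cube} the group $S_4$ acts on $\E$ through the linear maps $r_1,r_2,r_3$, hence acts on $\CH(\E)$ and permutes its faces. As claimed, the four groups of facets in the statement are four $S_4$-orbits, so it suffices to (i) verify that one representative from each group is a genuine facet, (ii) compute the $S_4$-orbit of that representative using Table~\ref{tab:generators} and check it equals the listed group, and (iii) check that the four groups are pairwise disjoint.

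The second step is to certify each representative $S$ as a facet by exhibiting a linear functional $\phi\colon\Sym{3}\to\R$ with $\phi\leqslant c$ on $\E$ and $\{e\in\E\ |\ \phi(e)=c\}=S$, and verifying $\dim\aff\Span(S)=4$. For the three eight-vertex facets in~\eqref{eq:8vertex-facets} the functionals $H_{11},H_{22},H_{33}$ (with the sign chosen according to $\sign(\alpha-\beta)$) suffice, since on $\E$ each of $e_{11},e_{22},e_{33}$ takes only the values $\alpha$ and $\beta$, and the corresponding level sets $\{1,\dots,8\}$, $\{1,2,3,4,9,10,11,12\}$, $\{5,\dots,12\}$ are exactly these facets. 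For the six ``$3+3$'' facets --- e.g.\ $\{1,2,5,8,9,12\}$, which is the disjoint union $\{1,5,9\}\sqcup\{2,8,12\}$ of two pairwise-compatible triples (cf.\ Lemma~\ref{lem:pairwise-compatible}) --- and for the two families of five-vertex facets, the separating functional is a suitable linear combination of $H_0,H_1,H_2,H_3$ and $H_{11},H_{22},H_{33}$; Table~\ref{tab:H}, together with Tables~\ref{tab:compatibility} and~\ref{tab:distances}, tells one which combination separates $S$ from $\E\setminus S$. The trichotomy $\eps\lessgtr\delta$ enters precisely here: by Table~\ref{tab:H} the maximiser sets of the $H_i$ jump at $\eps=\delta$, so for $\eps>\delta$ the $H_i$ alone expose only low-dimensional faces such as $\{2,6,10\}$ and must be perturbed by $H_{jj}$-terms to single out the five-vertex facets.

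The third step --- exhaustiveness, i.e.\ that there are no further facets --- is the real obstacle, and I do not see a structural argument avoiding the enumeration built into Algorithm~\ref{alg:facets}: list all five-element subsets of $\E$ of affine dimension four, retain those whose convex hull lies in $\partial\CH(\E)$ via the sign test on a normal against $\E\setminus S$, and then amalgamate coplanar pieces. This is exactly what the cited notebook performs and, at this size, is the only fully rigorous route; a weaker consistency check is the Euler relation $f_0-f_1+f_2-f_3+f_4=0$ for the boundary $4$-sphere with $f_0=12$, $f_1=66$ (Lemma~\ref{lem:edges}) and $f_4=33$. It is worth stressing that the regime $\eps>\delta$ cannot be reduced to the regime $\eps<\delta$ of Observation~\ref{obs:e<d}: although $r_0$ amounts to replacing $\eps$ by $-\eps$ in the transformation strains (Remark~\ref{rem:inversion}) and reverses all incompatibility signs (Observation~\ref{obs:inversion-compatibility}), it is not induced by any linear map on $\Sym[2\alpha+\beta]{3}$, so the coincidence in item~(1) of the statement --- that the twelve five-vertex facets for $\eps>\delta$ are exactly the $r_0$-images of those for $\eps<\delta$ --- is an a posteriori observation and not a shortcut; the faceting for $\eps>\delta$ has to be computed in its own right.
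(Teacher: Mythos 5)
Your proposal follows essentially the same route as the paper: the observation is established by the Mathematica implementation of Algorithm~\ref{alg:facets} (enumerate affinely four-dimensional five-tuples, test membership in $\partial\CH(\E)$ by a normal sign test, amalgamate coplanar pieces), with extremality of individual representatives certified by the linear functionals $H_i$, $H_{jj}$ or by computed normals, and with the orbit structure under $S_4$ organising the output --- exactly as in the remark following the observations. One small correction to your aside: for a five-dimensional polytope the boundary complex is a $4$-sphere, so the Euler--Poincar\'e relation reads $f_0-f_1+f_2-f_3+f_4=2$, not $0$; the check is harmless since it plays no role in the argument, but as stated it would appear to fail.
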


\begin{remark}
The extremality of facets that are invariant under $r_0$ can be verified as in the proof of Lemma~\ref{lem:verticesofC}: $H_{11}, H_{22}, H_{33}$ show that the facets in~\eqref{eq:8vertex-facets} are extremal. $H_i$, $i=0,1,2,3$, show that pairs of $T_3$s in~\eqref{eq:T3-pair} are extremal since $\eps < \delta$ implies $-\delta-2\eps < -\delta < \delta-2\eps < \delta < \delta+2\eps$.

In addition $H_i$, $i=0,1,2,3$, also show that the facets in~\eqref{eq:9vertex-facets} are extremal, cf.\ Table~\ref{tab:H}. For the remaining facets extremality can be verified through a computation of normals~\cite[Facet\_Normal.nb]{Chenchiah-Schloemerkemper1ESM}. \end{remark}

\begin{remark}[Proof of Lemma~\ref{lem:edges} when $\eps \neq \delta$] \label{rem:edges}
Observations~\ref{obs:e<d} and \ref{obs:e>d} lead to a proof of Lemma~\ref{lem:edges} when $\eps \neq \delta$: Observe that every edge is shared by at least four facets. (This is particularly easy to check for the incompatible edges: Each incompatible edge is contained in precisely one facet from each group.) It follows that every edge is extremal (cf.\ eg.\ \cite[Chapter 6]{Barvinok:2002} or the other references listed in Section~\ref{sec:convexity}). 
\end{remark}

\begin{remark}
We remark that this phenomenon of polytope facet structure depending on lattice parameters is not possible for the other martensites (i.e., cubic-to-tetragonal, cubic-to-trigonal and cubic-to-orthorhombic) because they form $n$-tetrahedra (for $n=2,3,5$, respectively) and thus their facet structure is fixed.
\end{remark}

%------------------------------------------------------------------------
\section{Non-laminate microstructures in $\RH(\E)$}
\label{sec:nonlaminates}

In this section we use the results of the preceding sections to derive our central results about non-laminate microstructures in monoclinic-I martensite. These include $T_3$ microstructures formed by the strains in $\E$ (Section~\ref{sec:T3s-level1}% and~\ref{sec:T3s-level1-4D}
) and $T_3$ microstructures formed by the nodes of these $T_3$s (Section~\ref{sec:T3s-level2}% and~\ref{sec:T3s-level2-5D}
).

%--------------
\subsection{(Level-1) $T_3$s and related microstructures}
\label{sec:T3s-level1}

As mentioned earlier there are precisely eight 3-tuples of pairwise incompatible vertices (cf.\ Table~\ref{tab:compatibility}):
\begin{multline*}
\E^3_{\incompatible}
 = \left\{ \{ e^{(1)}, e^{(6)}, e^{(12)} \},\ \{ e^{(1)}, e^{(8)}, e^{(10)} \},\ \{ e^{(2)}, e^{(5)}, e^{(11)} \},\ \{ e^{(2)}, e^{(7)}, e^{(9)} \}, \right. \\
    \left. \{ e^{(3)}, e^{(6)}, e^{(9)} \},\ \{ e^{(3)}, e^{(8)}, e^{(11)} \},\ \{ e^{(4)}, e^{(5)}, e^{(10)} \},\ \{ e^{(4)}, e^{(7)}, e^{(12)} \} \right\}.
%= \left\{ \{1,6,12\}, \{1,8,10\}, \{2,5,11\}, \{2,7,9\}, \{3,6,9\}, \{3,8,11\}, \{4,5,10\}, \{4,7,12\} \right\}.
\end{multline*}
Since
\begin{equation*}
\sign \det( e^{(1)} - e^{(6)} ) = \sign \det( e^{(6)} - e^{(12)} ) = \sign\det( e^{(12)} - e^{(1)} ) \neq 0
\end{equation*}
and likewise for the other 3-tuples, cf.\ Table~\ref{tab:compatibility}, we obtain by Lemma~\ref{lem:T3} that each of these 3-tuples forms a $T_3$. (See also Remark~\ref{rem:incompatible-distances}.)

Let $v_{i,j,k} = \{ e^{(i)}, e^{(j)}, e^{(k)} \} \in \E^3_{\incompatible}$. We set
\begin{align*}
\tau_{i,j,k} &= \RH(v_{i,j,k}), \\
\T &:= \{ \tau_{i,j,k}\ |\ v_{i,j,k} \in \E^3_{\incompatible} \}.
\end{align*}

For $\tau \in \T$ and $r \in S_4 \times C_2$ by $r \tau$ we mean the $T_3$ formed by the image under $r$ of the vertices of $\tau$. (The existence of such $T_3$s follows from $S_4 \times C_2$ being a symmetry group of $\E^3_{\incompatible}$ as was shown in Lemma~\ref{lem:symmetry2}). The symmetry relations between the $T_3$s is illustrated in Figure~\ref{fig:T3-symmety}. As we shall see (Example~\ref{eg:toblerone} below) each $\tau \in \T$ is specially related to $r_0 \tau$; we refer to it as the \emph{dual} of $\tau$.

\begin{figure}[ht]
\begin{center}
\input 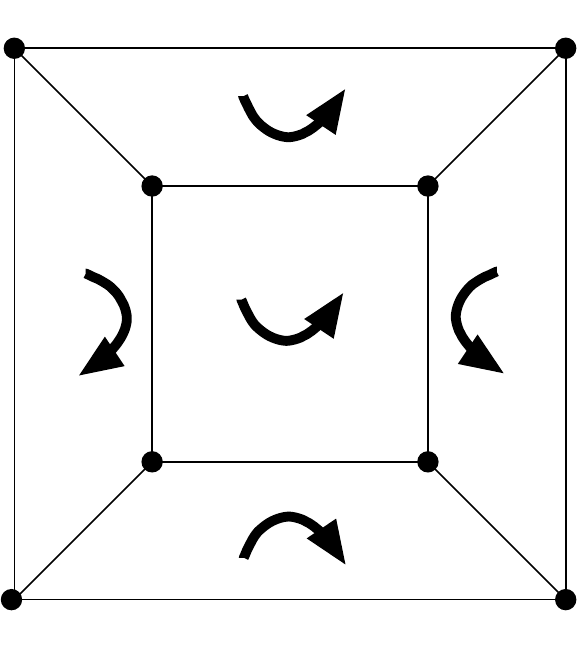_t \quad \quad
\input 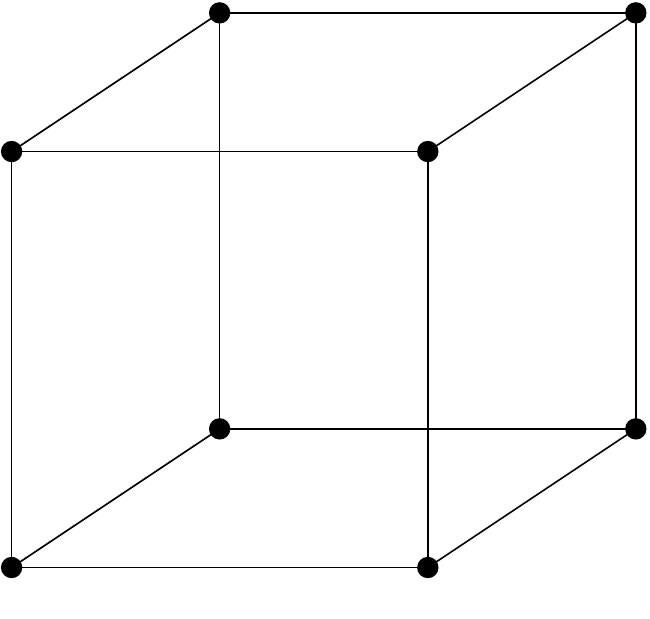_t
%\subfloat[Three-point perspective]{\input symmetryT3s_three.pdf_t}
%\subfloat[Three-point perspective]{\includegraphics[width=0.45\textwidth]{Figures/cube2}}
\caption{The symmetry of the eight $T_3$s. }
\label{fig:T3-symmety}
\end{center}
\end{figure}

Before we proceed further we note that each of these eight $T_3$s is symmetric (Definition~\ref{def:T3-symmetry}) and has distinct nodes (Definition~\ref{def:T3-nodes}). Moreover all eight $T_3$s are similar (Definition~\ref{def:T3-similarity}):

\begin{lemma}%[Similarity and symmetry of $T_3$s]
\label{lem:T3-equivalence}
All $T_3$s in $\T$ are similar, and each is symmetric and has distinct nodes.
\end{lemma}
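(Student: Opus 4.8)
The statement has three parts: (i) all eight $T_3$s in $\T$ are similar; (ii) each is symmetric; (iii) each has distinct nodes. The overall strategy is to reduce everything to a single representative $T_3$ by exploiting that $S_4 \times C_2$ acts transitively on $\E^3_{\incompatible}$ (Remark~\ref{rem:incompatible-distances}) and that the elements of $S_4$ are induced by linear (indeed, isometric) maps on $\Sym[2\alpha+\beta]{3}$ via~\eqref{eq:coordinate-rotations}, while $r_0$ corresponds to $\eps \mapsto -\eps$ (Remark~\ref{rem:inversion}).

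\emph{Similarity.} First I would fix the representative $v_{1,6,12} = \{e^{(1)}, e^{(6)}, e^{(12)}\}$ and observe that any other 3-tuple in $\E^3_{\incompatible}$ is the image of this one under some $r \in S_4 \times C_2$ (transitivity). For $r \in S_4$, the induced map on strains is $e \mapsto Rer^T$ with $R \in SO(3)$, hence determinant-preserving; therefore the cubic polynomial $\lambda \mapsto \det((\lambda e^{(i)} + (1-\lambda)e^{(j)}) - e^{(k)})$ is unchanged (up to relabelling the vertices by the permutation $r$), so the parameters $\lambda_{12}, \lambda_{23}, \lambda_{31}$ defined in~\eqref{eq:T3-scaffold} are permuted but not altered. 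By Definition~\ref{def:T3-similarity}\eqref{it:local11} this means $r\tau_{1,6,12}$ is similar to $\tau_{1,6,12}$. For $r_0$, the map $\eps \mapsto -\eps$ is realized (on each relevant $2$-plane through the three strains) by a reflection, which is merely orientation-reversing; since $\det$ of a $3\times3$ symmetric matrix is a cubic form and the three strains differ only in the sign pattern of their $\eps$-entries, one checks directly from~\eqref{eq:determinant} and Table~\ref{tab:compatibility} that the roots $\lambda_{12},\lambda_{23},\lambda_{31}$ are again only permuted. Hence every $\tau \in \T$ is similar to $\tau_{1,6,12}$, and similarity is transitive, giving (i).

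\emph{Symmetry and distinct nodes.} For (ii), I would apply Remark~\ref{rem:T3-symmetry}: take $R \in SO(3)$ a rotation of order $3$ permuting the coordinate axes cyclically, $R = \begin{pmatrix} 0&0&1\\1&0&0\\0&1&0\end{pmatrix}$ (or its transpose), and check from Table~\ref{tab:strains} that conjugation by $R$ cyclically permutes $e^{(1)} \to e^{(6)} \to e^{(12)} \to e^{(1)}$ (this is a finite verification, done in the accompanying Mathematica files). Then Remark~\ref{rem:T3-symmetry} immediately gives $\det(e^{(1)}-e^{(6)}) = \det(e^{(6)}-e^{(12)}) = \det(e^{(12)}-e^{(1)})$ and, since this common value is $\pm 4\eps((\alpha-\beta)\delta + \eps^2 - \delta^2) \neq 0$ under our genericity assumption, that $\tau_{1,6,12}$ is symmetric, i.e., $\lambda_{12} = \lambda_{23} = \lambda_{31}$. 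By the similarity established in (i), all eight $T_3$s are then symmetric. For (iii), the nodes $e_{i,j}$ coincide iff the three compatible directions through (say) $e^{(1)}$ degenerate; by Remark~\ref{rem:T3} there are exactly three distinct compatible directions, and distinctness of the nodes is equivalent to the three numbers $\lambda_{12},\lambda_{23},\lambda_{31}$ (equivalently, the three roots of the cubic in Lemma~\ref{lem:T3}) together with the incidence data being nondegenerate — which again follows from the explicit common root value being nonzero and from the symmetric-$T_3$ structure, or is simply read off from a direct computation of the three nodes for $\tau_{1,6,12}$ and then transported by symmetry.

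\emph{Main obstacle.} The conceptual content is light; the real work is bookkeeping. The step most likely to cause trouble is verifying (iii) cleanly: one must confirm that the symmetric common root value does not accidentally force two nodes to coincide, which requires either an explicit computation of $e_{1,2}, e_{2,3}, e_{3,1}$ for the representative $T_3$ (a short but genuine calculation, naturally relegated to the Mathematica supplement~\cite{Chenchiah-Schloemerkemper1ESM}) or a general argument that a symmetric $T_3$ with a single nonzero determinant value always has distinct nodes. I would present the explicit computation, since it simultaneously exhibits the nodes that are needed in Section~\ref{sec:nonlaminates} anyway.
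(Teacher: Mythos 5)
Your overall architecture is sound and, for similarity and symmetry, essentially matches the paper's: both reduce to the transitivity of the cube rotation group on $\E^3_{\incompatible}$ together with the fact that each element of $S_4$ is realised by a linear, isometric, determinant-preserving conjugation, so the $\lambda_{ij}$ transfer exactly and Remark~\ref{rem:T3-symmetry} applies to the order-$3$ stabiliser of each corner. Two remarks on that part. First, your $r_0$ branch is unnecessary: Remark~\ref{rem:incompatible-distances} notes that $S_4$ \emph{alone} acts transitively on the eight corners, so you never need to transport a $T_3$ by $r_0$; this is fortunate, because your justification for $r_0$ (that the roots of the cubic in $\lambda$ are ``only permuted'' under $\eps\mapsto-\eps$) is not actually established by \eqref{eq:determinant} or Table~\ref{tab:compatibility}, which only control the polynomial at $\lambda=0,1$. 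Second, the specific rotation you name is wrong for your chosen representative: conjugation by $R=\left(\begin{smallmatrix}0&0&1\\1&0&0\\0&1&0\end{smallmatrix}\right)$ sends $e^{(1)}\mapsto e^{(9)}\mapsto e^{(5)}\mapsto e^{(1)}$, i.e.\ it stabilises the pairwise \emph{compatible} triple $\{1,5,9\}$ (the minimisers of $H_0$), not the $T_3$ $\{1,6,12\}$. The stabiliser of a $T_3$-corner is the rotation about the corresponding \emph{signed} diagonal, a signed permutation matrix; such an $R$ exists for each corner, so this is a fixable slip, but the verification as you describe it would fail.

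For distinct nodes you take a genuinely different route from the paper. Your fallback ``general argument that a symmetric $T_3$ with a single nonzero determinant value always has distinct nodes'' is not valid: for a symmetric $T_3$ the three nodes coincide precisely when the common parameter $\lambda$ equals $\tfrac12$, in which case the three cevians are medians concurrent at the barycentre, and nothing in the nonvanishing of $\det(e_i-e_j)$ forbids this. Your primary plan — explicitly computing the three nodes of one representative and transporting by symmetry — does work, but it is more computational than needed. The paper's argument is cleaner and worth knowing: if two nodes coincide then, by the order-$3$ symmetry, all three do, and the common point must be the unique fixed point of the rotation in the plane of the $T_3$, namely the barycentre $\tfrac13(e^{(1)}+e^{(8)}+e^{(10)})$; but a node is by construction compatible with two vertices, whereas a single determinant evaluation shows the barycentre is incompatible with all three vertices — contradiction. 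This replaces the computation of three intersection points by one determinant.
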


\begin{proof}
The symmetry of each $T_3$ and the similarity of all $T_3$s in $\T$ follow from Remark~\ref{rem:incompatible-distances}.

Now consider the nodes of $\tau_{1,8,10}$. Suppose, on the contrary, that $e_{1,8} = e_{8,10} = e_{10,1}$. (Here we use the notation of Section~\ref{sec:T3}.) Then, by symmetry, the nodes coincide with the barycentre of the $T_3$ which is $\frac{1}{3} \left( e^{(1)} + e^{(8)} + e^{(10)} \right)$. However, an elementary calculation shows that this barycentre is incompatible with $e_1$, $e_8$ and $e_{10}$ which is a contradiction (cf.\ Definition~\ref{def:T3-1}). Thus $\tau_{1,8,10}$ has distinct nodes and by symmetry this is true for all eight $T_3$s.
\end{proof}

From Proposition~\ref{prop:T3-hull} and symmetry it follows that the barycentre of each $v \in \E^3_{\incompatible}$ is contained in $\RH(v)$.

From Observations~\ref{obs:e<d} and~\ref{obs:e=d}, when $\eps \leqslant \delta$,  each $T_3$ in $\T$ along with its dual is contained in a facet of $\CH(\E)$. In particular for each $\tau \in \T$, the symmetrised rank-one convex hull of $\tau \cup r_0 \tau$ is contained in a facet of $\CH(\E)$ and is thus a part of the boundary of the symmetrised rank-one convex hull of $\E$:
\begin{equation} \label{eq:T3-pairs-are-on-boundary}
 \RH(\tau \cup r_0 \tau) \subset \partial \mathcal{R}(\E).
\end{equation}

Example~\ref{eg:toblerone} below reveals that $\RH(\tau \cup r_0 \tau)$ is four dimensional and that each point in it can be attained by laminates of $T_3$ microstructures. In other words $\RH(\tau \cup r_0 \tau)$ contains a four-dimensional set of $T_3$s. Before we show this we first construct a three-dimensional set of $T_3$s which lies in $\partial \mathcal{R}(\E)$ when $\eps \leqslant \delta$.

\begin{example}[A three-dimensional set of $T_3$s] \label{eg:3D-set-T3s}
Let $\tau \in \T$. From Table~\ref{tab:compatibility} the elements of $\E$ with which all the vertices of $\tau$ are compatible are precisely the vertices of $r_0 \tau$. Following the construction introduced in Lemma~\ref{lem:T3-3D}, we construct a three-dimensional set of $T_3$s from $\tau$ and (any) one vertex from $r_0 \tau$. There are three such continua of $T_3$s, one for each vertex of $r_o \tau$. Analogously there are three such sets constructed from $r_0 \tau$ and the vertices of $\tau$. 
\end{example}

Note that this example shows that $T_3$s exist not only in $\E$ but also in $\LH_1(\E) \setminus \E$ since the vertices of almost all of the $T_3$s so constructed are themselves attained by a lamination of strains in $\E$. However in the next example the construction of $T_3$s precedes the construction of laminates: 

\begin{example}[A four-dimensional set of laminates of nodes of $T_3$s] \label{eg:toblerone}
From each $\tau \in \T$ and its dual we construct a four-dimensional set of laminates of nodes of $T_3$s. We do this explicitly for $\tau_{1,8,10}$ and its dual $\tau_{2,7,9}$; the construction for the other pairs is similar.

First we note that each point in one of these $T_3$s is compatible with the corresponding point (i.e., the point with the same barycentric coordinates) in its dual. In fact this is true even after a cyclic permutation of the vertices: $\forall x,y,z \in \R$,
\begin{equation*}
 x e^{(1)} + y e^{(8)} + z e^{(10)} \, \compatible \, x e^{(2)} + y e^{(9)} + z e^{(7)}, \, x e^{(7)} + y e^{(2)} + z e^{(9)}, \, x e^{(9)} + y e^{(7)} + z e^{(2)}. 
\end{equation*}
This is immediate from the calculation~\cite[Pair\_of\_Level-1\_T3s.nb]{Chenchiah-Schloemerkemper1ESM}:
\begin{align*}
 &\det \left( ( x e^{(1)} + y e^{(8)} + z e^{(10)} ) - ( x e^{(2)} + y e^{(9)} + z e^{(7)} ) \right) \\
 &= \det \left( ( x e^{(1)} + y e^{(8)} + z e^{(10)} ) - ( x e^{(7)} + y e^{(2)} + z e^{(9)} ) \right) \\
 &= \det \left( ( x e^{(1)} + y e^{(8)} + z e^{(10)} ) - ( x e^{(9)} + y e^{(7)} + z e^{(2)} ) \right) \\
 &= 0.
\end{align*}
Thus, in particular, each node of $\tau_{1,8,10}$ is compatible with every node of $\tau_{2,7,9}$ (and vice versa). Since the nodes of a $T_3$ are pair-wise compatible it follows that these six nodes (i.e.\ in the notation of Section~\ref{sec:T3}, $e_{1,8}$, $e_{8,10}$, $e_{10,1}$, $e_{2,9}$, $e_{9,7}$ and $e_{7,2}$) are pair-wise compatible (see Figure~\ref{fig:toblerone}, note that the figure is schematic; in fact $\tau$ and $r_0 \tau$ share the same barycenter). We conclude that:
\begin{equation*}
\CH(\{ e_{1,8}, e_{8,10}, e_{10,1}, e_{2,9}, e_{9,7}, e_{7,2} \}) \subset \RH(\{ e_1, e_2, e_7, e_8, e_9, e_{10} \}).
\end{equation*}
Each point in this convex hull is attained by a lamination of the nodes of $\tau_{1,8,10}$ and $\tau_{2,7,9}$. A simple Mathematica verification~\cite[Dimension.nb]{Chenchiah-Schloemerkemper1ESM} shows that this convex hull is four-dimensional. (Thus the maximum depth of lamination required is also four, cf.\ proof of Theorem~\ref{thm:edge-compatible}.)
\end{example}

\begin{figure}
\begin{center}
\resizebox{13cm}{!}{\input 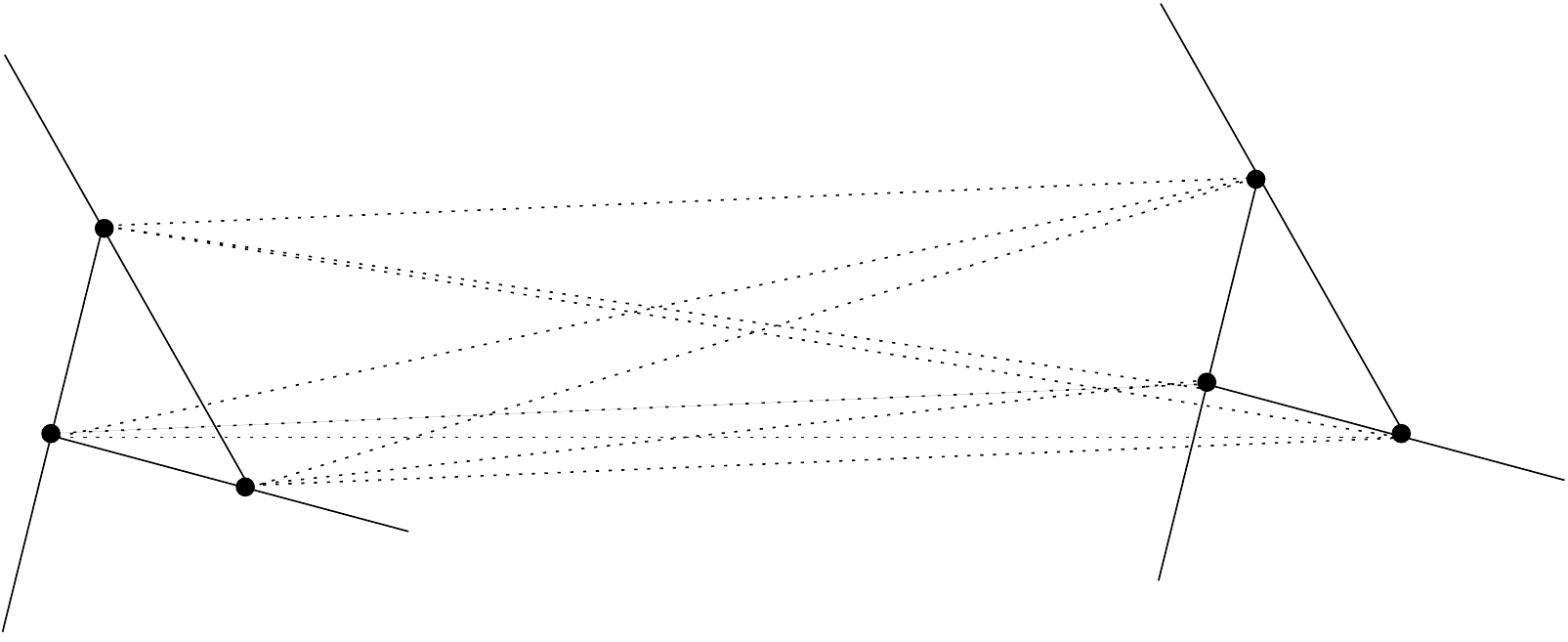_t}
\caption{Schematic representation of the four-dimensional set of laminates of nodes of $T_3$s constructed in Example~\ref{eg:toblerone}.}
\label{fig:toblerone}
\end{center}
\end{figure}

Note that when $\eps \leqslant \delta$, from~\eqref{eq:T3-pairs-are-on-boundary},
\begin{equation} \label{eq:toberone-is-on-boundary}
 \CH(\{ e_{1,8}, e_{8,10}, e_{10,1}, e_{2,9}, e_{9,7}, e_{7,2} \}) \subset \partial \mathcal{R}(\E).
\end{equation}

\subsection{Level-2 $T_3$s and related microstructures}
\label{sec:T3s-level2}

Next we construct new $T_3$s from the nodes of the $T_3$s in $\T$. We refer to the former $T_3$s as Level-1 $T_3$s and to the new $T_3$s as Level-2 $T_3$s. Level-2 $T_3$s allow us to construct a five-dimensional set of $T_3$s, see Construction~\ref{cons:5D-set-T3s} below.

\begin{cons} \label{cons:level2-T3s}
Let $\tau \in \T$ and let $\tau_1,\tau_2,\tau_3 \in \T$ be chosen such that, in Figure~\ref{fig:T3-symmety}, the line joining $\tau$ and $\tau_i$ is an edge of the cube for $i=1,2,3$ (thus $\tau_i = r_j^{\pm1} \tau$ for some $j=1,2,3$). Note that the set $\{\tau_1,\tau_2,\tau_3\}$ is invariant under any element of $S_4$ that leaves $\{\tau, r_0 \tau\}$ invariant (these are rotations of $\frac{2\pi}{3}$ through the major diagonal formed by $\tau$ and $r_0 \tau$). Let $r$ be an element of this group (i.e., one of two such rotations).
Now let $n_1$ be a node of $\tau_1$. Then $n_1$, $r n_1$, $r^2 n_1$ form a symmetric $T_3$ with distinct nodes. Similarly for $n_2$, $r n_2$, $r^2 n_2$ and $n_3$, $r n_3$, $r^2 n_3$. This is illustrated in Figure~\ref{fig:level2-T3s}. 
\end{cons}

\begin{figure}
\begin{center}
\resizebox{12.5cm}{!}{\input 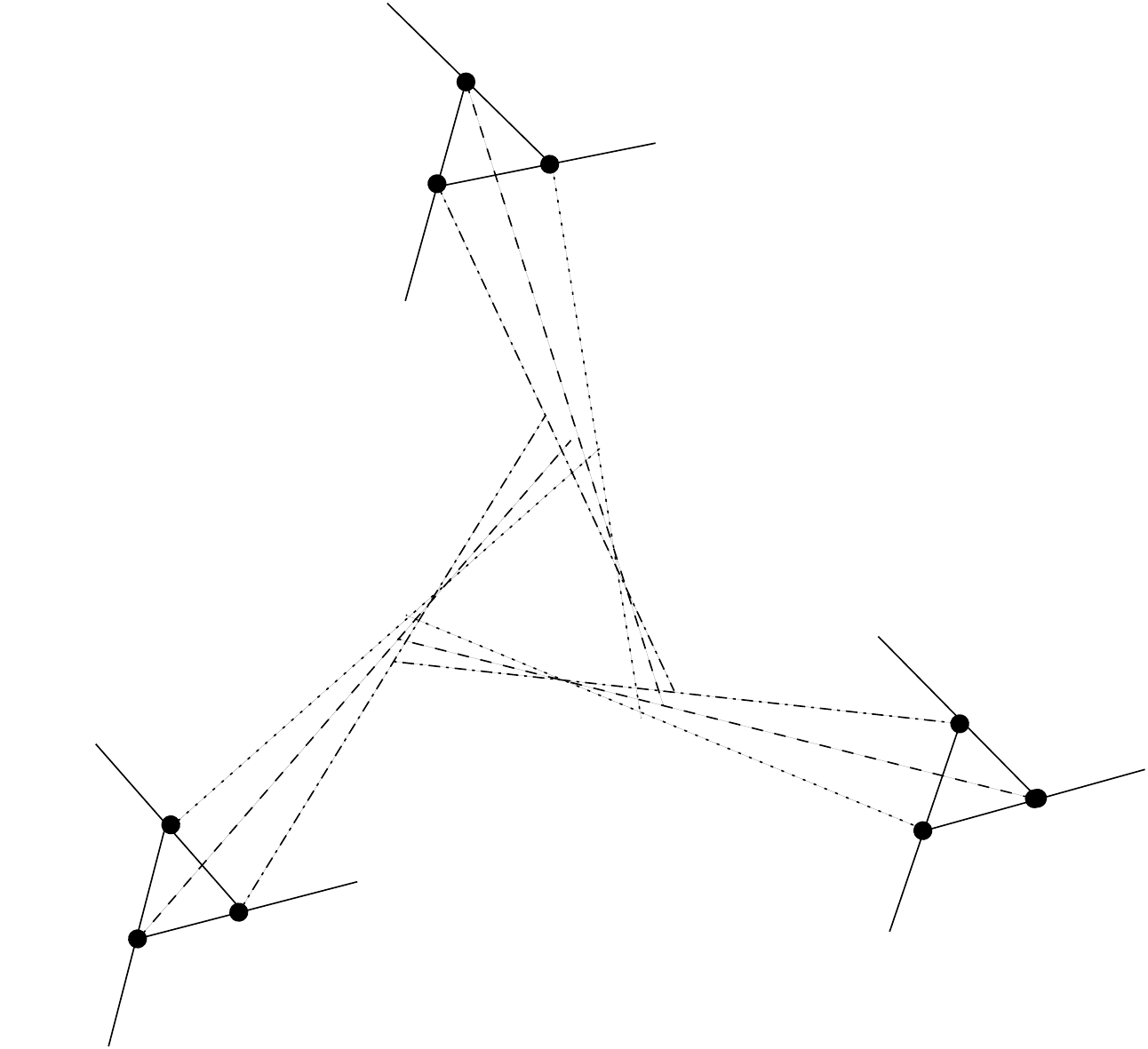_t}
\caption{Level-2 $T_3$s.} 
\label{fig:level2-T3s}
\end{center}
\end{figure}

\begin{proof}
We show that $n_1$, $r n_1$, $r^2 n_1$ form a $T_3$ explicitly for $\tau = \tau_{3,8,11}$, $\tau_1 = \tau_{1,8,10}$, $\tau_2 = \tau_{2,5,11}$ and $\tau_3 = \tau_{3,6,9}$; the result for the other 3-tuples follows by symmetry.

Since $n_1 \in \tau_{1,8,10}$, it has the barycentric representation $x e^{(1)} + y e^{(8)} + z e^{(10)}$ for some $x,y,z \in [0,1]$ with $x+y+z=1$. Then, as can be easily checked (Table~\ref{tab:generators}),
\begin{equation*}
\{r n_1, r^2 n_1 \} = \{x e^{(5)} + y e^{(11)} + z e^{(2)}, x e^{(9)} + y e^{(3)} + z e^{(6)} \}.
\end{equation*}
(Note the order of the vertices.) It can be verified~\cite[Level-2\_T3s.nb]{Chenchiah-Schloemerkemper1ESM} that
\begin{align*}
 &\det \left( ( x e^{(1)} + y e^{(8)} + z e^{(10)} ) - ( x e^{(5)} + y e^{(11)} + z e^{(2)} ) \right) \\
 &= \det \left( ( x e^{(5)} + y e^{(11)} + z e^{(2)} ) - ( x e^{(9)} + y e^{(3)} + z e^{(6)} ) \right) \\
 &= \det \left( ( x e^{(9)} + y e^{(3)} + z e^{(6)} ) - ( x e^{(1)} + y e^{(8)} + z e^{(10)} ) \right) \\
 &\neq 0.
\end{align*}
Thus, by Lemma~\ref{lem:T3}, Remark~\ref{rem:T3-symmetry} and~\eqref{eq:coordinate-rotations}, $n_1$, $r n_1$, $r^2 n_1$ form a symmetric $T_3$. (From Remark~\ref{rem:T3-symmetry} it would have sufficed to check that one of the determinants above is non-zero.) To show that the nodes of this $T_3$ are distinct it suffices to check that the barycentre of the $T_3$ is incompatible with (one of) its nodes (cf.\ proof of Lemma~\ref{lem:T3-equivalence}). From~\cite[Level-2\_T3s.nb]{Chenchiah-Schloemerkemper1ESM}:
\begin{multline*}
x e^{(1)} + y e^{(8)} + z e^{(10)},\ x e^{(5)} + y e^{(11)} + z e^{(2)},\ x e^{(9)} + y e^{(3)} + z e^{(6)} \\
 \incompatible \frac{1}{3} \left( ( x e^{(1)} + y e^{(8)} + z e^{(10)} ) + ( x e^{(5)} + y e^{(11)} + z e^{(2)} ) + ( x e^{(9)} + y e^{(3)} + z e^{(6)} ) \right).
\end{multline*}
Which completes the proof.
\end{proof}

Since there are eight choices of $\tau$ and three choices of $n_1$ for each choice of $\tau$, by this construction we obtain 24 $T_3$s. 

Finally Example~\ref{eg:toblerone} and Construction~\ref{cons:level2-T3s} can be combined to construct a five-dimensional set of $T_3$s whose vertices are themselves laminates of nodes of (level-1) $T_3$s.

\begin{cons} \label{cons:5D-set-T3s}
Let $\tau \in \T$ and let $\tau_1, \tau_2, \tau_3 \in \T$ and $r \in S_4$ be as in Construction~\ref{cons:level2-T3s} above. Let $n_{1,i}$ and $n'_{1,i}$, $i=1,2,3$ be the nodes of $\tau_1$ and its dual respectively. Pick $\mu_i, \mu'_i \in [0,1]$ such that $\sum_{i=1}^3 \mu_i + \sum_{i=1}^3 \mu'_i = 1$ and let
\begin{equation*}
p_1 = \sum_{i=1}^3 \mu_i n_{1,i}+ \sum_{i=1}^3 \mu'_i n'_{1,i}.
\end{equation*} 
Note that $p_1$ is an element of the four-dimensional set constructed in Example~\ref{eg:toblerone} and thus can be attained by a lamination of $n_{1,i}, n'_{1,i}$, $i=1,2,3$. We assert that $p_1$, $r p_1$, $r^2 p_1$ form a symmetric $T_3$. The union of the $T_3$s  as $p_1$ varies yields a five-dimensional set. When $\eps \leqslant \delta$ from~\eqref{eq:toberone-is-on-boundary},  this set intersects the boundary of the symmetrised rank-one convex hull of $\E$.
\end{cons}

\begin{proof}
 A Mathematica calculation~\cite[Level-2\_T3s.nb]{Chenchiah-Schloemerkemper1ESM} shows this explicitly for $\tau_1 = \tau_{1,8,10}$, $\tau_2 = \tau_{2,5,11}$ and $\tau_3 = \tau_{3,6,9}$ (i.e., $\tau = \tau_{3,8,11}$), the construction for the other 3-tuples is similar.

To see that the set of $T_3$s constructed here is five-dimensional, it suffices to note that $p_1$ is picked from a four-dimensional set which, not being closed under $r$, does not contain the $T_3$s formed by $p_1$, $r p_1$, $r^2 p_1$. It follows that the union of the $T_3$s (as $p_1$ varies) constructed here is a five-dimensional set. 
\end{proof}

It is natural at this point to ask whether the nodes of level-2 $T_3$s form level-3 $T_3$ and, more generally, whether the nodes of level-$n$ $T_3$s from level-$(n+1)$ $T_3$s. We postpone these questions to~\cite{Chenchiah-Schloemerkemper-JMPS} and instead conclude by considering some implications of the results presented here.

%----------------------------
\section{Conclusions}
\label{sec:conclusions}

\subsection{Mathematical comments}

While we have, in the later half of this paper, focused on monoclinic-I martensite, it is clear that our general strategy can, in principle, be applied to any finite set in $\Sym[c]{3}$; indeed in~\cite{Chenchiah-Schloemerkemper-JMPS} we apply it also to monoclinic-II martensite. Here we briefly comment on the two main components of our strategy, namely an understanding of the algebraic structure of symmetrised rank-one convex cones, and an understanding of the polytope structure of the given set.

\paragraph{The algebraic structure of symmetrised rank-one convex cones.}
While we have a complete understanding of symmetrised rank-one convexity in two-dimensions (Sections~\ref{sec:2D-cone} and~\ref{sec:T3}, and \cite{Chenchiah-Schloemerkemper-PRSL}), the algebraic structure of symmetrised rank-one convex cones is not yet sufficiently well understood in higher dimensions. A key missing ingredient is a characterisation, in terms of canonical forms, of real cubic polynomials in several variables. This problem in invariant theory seems unsolved for three and more variables. The fruitfulness of our approach in two-dimensions suggests that it might be valuable to more fully explore the algebraic aspects of symmetrised rank-one convexity.

When $\eps < \delta$ we demonstrate the existence of $T_3$s that attain points on the boundary of $\CH(\E)$, more precisely, that attain points on the four-dimensional facets of $\CH(\E)$. Though (as a Mathematica calculation~\cite[Facet-T3Pair.nb]{Chenchiah-Schloemerkemper1ESM} shows the $T_3$s do not belong to the three-dimensional facets (of the four-dimensional facets) of $\CH(\E)$, we suspect that the symmetrised rank-one convex hull of monoclinic-I martensite is strictly larger than the lamination hull. If so, the question arises as to how much larger it is. In terms of dimensions a perturbation argument shows that $\RH(\E) \setminus \LH(\E)$ would be at least two-dimensional. In fact we suspect that it is full (i.e., five) dimensional.

\paragraph{Convex polytopes.}
Since the convex hull of any finite set is a convex polytope, it is natural that an attempt to determine the semi-convex hull of a finite sets takes advantage of the structure of the convex polytopes they generate. This seems not to have been considered in the literature except for Theorem~\ref{thm:edge-compatible}. Lemma~\ref{lem:edges} and Remark~\ref{rem:local1} demonstrate the counter-intuitive behaviour of high-dimensional polytopes and thus the usefulness of knowledge of the theory of convex polytopes.

Moreover, when, as in the example of monoclinc-I martensite considered here, the faceting structure of the polytope depends on the material parameters, it might be expected that qualitative features of the semi-convex hulls and envelopes depend on the material parameters as well. If so, this heightens the possible utility of these polytopes for evaluating semi-convex hulls and envelopes.

\subsection{Implications for mechanics}

\paragraph{Two kinds of monoclinic-I martensite.}
Curiously, all cubic-to-monoclinic-I materials that we are aware of are monoclinic-Ia martensites (i.e., those for which $\eps<\delta$), see Table~\ref{tab:parameters}. It is natural to ask whether monoclinic-I martensite recovers more strains (modulo appropriate normalisation of the lattice parameters) as $\eps-\delta$ approaches zero (with $\eps=\delta$ being the ideal), and whether monoclinic-Ib martensites (i.e., those for which $\eps>\delta$) would demonstrate greater shape memory effect.

\paragraph{Monoclinic-II martensite.}
We have reason to believe that there are multiple kinds of monoclinic-II martensite as well. We hope to settle this question in~\cite{Chenchiah-Schloemerkemper-JMPS}.

As can be easily verified (eg.\ \cite{Bhattacharya:2003}), the compatibility relations between the twelve transformation strains of monoclinic-II martensite are identical to those between the twelve transformation strains of monoclinic-I martensite. Then, with the help of Lemma~\ref{lem:T3}, exactly as for monoclinic-I martensite, eight $T_3$s can be formed from these strains. This positively answers the question raised in \cite[p863]{Bhattacharya:1994p843} as to whether $T_3$s can be formed from the twelve transformation strains of monoclinic-II martensite. Indeed Lemma~\ref{lem:T3} presents an elementary test by which this question can be answered for any three-tuple of strains that have the same trace.

\paragraph{Special parameters.}
Throughout this paper, including in the (non-numerical) Mathematica computations, we have assumed the lattice parameters to be generic except that we considered the case $\eps=\delta$. We did identify one special case, $(\alpha-\beta) \delta + \eps^2 - \delta^2 = 0$, in which all twelve strains of monoclinic-I martensite are pair-wise compatible.

The question arises as to whether a material recovers more strains as $(\alpha-\beta) \delta + \eps^2 - \delta^2$ approaches zero. We suggest another parameter of importance, $\lambda := \lambda_{12} = \lambda_{23} = \lambda_{31}$ (Definition~\ref{def:T3-symmetry}) which however appears related to $(\alpha-\beta) \delta + \eps^2 - \delta^2$ for the three materials considered in Table~\ref{tab:special-parameters}. 

\begin{table}
\begin{center}
\begin{tabular}{c|c|c}
Material & $\lambda$ & $(\alpha-\beta) \delta + \eps^2 - \delta^2$ \\ \hline
NiTi & $0.6830$ & $0.0024$ \\ \hline
TiNiCu & $0.6683$ & $0.0021$ \\ \hline
CuZr & $0.0396$ & $-0.0015$ \\
\end{tabular}
\caption{$\lambda$ and $(\alpha-\beta) \delta + \eps^2 - \delta^2$ for the $T_3$s in $\T$ for NiTi, CuZr and TiNiCu.}
\label{tab:special-parameters}
\end{center}
\end{table}

Our reasoning is that as $\lambda$ becomes close to either $0$ or $1$, the nodes of a $T_3$ become closer to its vertices and the energetic penalty for a $T_3$ microstructure being approximated by a finite-rank laminate becomes smaller. Indeed,  instead of constructing a finite-rank laminate from the three nodes of a $T_3$, it suffices to move only  \emph{one} of the vertices to a node.  

As Table~\ref{tab:special-parameters} shows $\lambda = 0.0396$ for CuZr. We hypothesise that for this material the symmetrised lamination convex hull is very close to the convex hull and thus the convex hull is a very good approximation to all its semi-convex hulls. The same would apply to the (semi-)convex envelopes of the corresponding energy density as well. A similar reasoning might explain the remarkable closeness between the symmetrised lamination convex hull and the convex hull for CuAlNi (a monoclinic-II martensite) observed in \cite{Govindjee:2007}.

\paragraph{Microstructure corresponding to $T_3$s.}
For monoclinic-I martensite with $\eps < \delta$ we have shown that there exists a five-dimensional set of $T_3$s which reaches the boundary of $\CH(\E)$. This raises the question as to whether the microstructures experimentally observed for strains in this set are laminate approximations to $T_3$ microstructures. We wonder too if experimental observations of such microstructures would provide insight into mechanisms governing microstructure formation (dynamics) and the role of surface energy. 

\paragraph{Acknowlegements.}
This research was supported by the Royal Society (International Joint Project ``Symmetrised rank-one convex hull of monoclinic martensite''). It was initiated while the authors were at the Max Planck Institute for Mathematics in the Sciences, Leipzig and continued while AS was at the Department of Mathematics, University of Erlangen-Nuremberg, the Hausdorff Center for Mathematics and the Institute for Applied Mathematics, University of Bonn. IVC thanks these institutes and the Institute for Mathematics, University of W\"urzburg for hospitality during visits. IVC was introduced to the problem of determining the semi-convex hulls of monoclinic-I martensite when Sanjay Govindjee and Valery Smyshlyaev visited Kaushik Bhattacharya at Caltech. We thank Kaushik Bhattacharya, Richard James, Robert Kohn and Stefan M\"uller for helpful discussions. We thank the referees for helpful comments, for suggesting the name ``wave convex'', and for bringing \cite{Tartar:1979,Murat:1981} to our attention.

\bibliographystyle{spmpsci}
\bibliography{References,Papers,Manuscripts}

\end{document}